\def\dofigures{11}
\documentclass{article}

\usepackage{calc}

\usepackage{amsmath}
\usepackage{amsxtra}
\usepackage{amssymb}
\usepackage{amsfonts}
\usepackage{amsthm}
\usepackage{amstext}
\usepackage{amsbsy}
\usepackage{amscd}
\usepackage[sans]{dsfont}

\usepackage{color}
\usepackage{bbding}

\newcommand{\R}{\mathds{R}}

\newcommand{\defm}[1]{\emph{#1}}
\newcommand{\subeq}[2]{\mathord{\underbrace{\mathop{#1}}_{#2}}}

\newcommand{\sgn}{\operatorname{sgn}}

\newcommand{\supp}{\operatorname{supp}}

\newcommand{\loc}{{\operatorname{loc}}}

\theoremstyle{plain}
\newtheorem{theorem}{Theorem}
\newtheorem{lemma}{Lemma}

\theoremstyle{definition}

\newtheorem{definition}[lemma]{Definition}
\theoremstyle{remark}
\newtheorem{remark}[lemma]{Remark}

\def\XXint#1#2#3{{\setbox0=\hbox{$#1{#2#3}{\int}$}
\vcenter{\hbox{$#2#3$}}\kern-.5\wd0}}

\newcommand{\BV}{\operatorname{BV}}

\newcommand{\aint}{{-\kern-5.3mm\int}}

\newcommand{\boi}[2]{{]#1,#2[}}
\newcommand{\loi}[2]{{]#1,#2]}}
\newcommand{\roi}[2]{{[#1,#2[}}
\newcommand{\cli}[2]{{[#1,#2]}}

\newcommand{\topref}[2]{\overset{\text{\eqref{#1}}}{#2}}

\newcommand{\myeqref}[1]{\eqref{#1}}
\newcommand{\myref}[1]{\ref{#1}}
\newcommand{\mylabel}[1]{\label{#1}}
\newcommand{\myeqlabel}[1]{\label{#1}}

\usepackage{graphicx}

\usepackage{bbding}
\usepackage{perpage}
\usepackage{mathtools}

\usepackage{calc}

\usepackage{amsmath}
\usepackage{amsxtra}
\usepackage{amssymb}
\usepackage{amsfonts}

\usepackage[sans]{dsfont}

\usepackage{color}
\definecolor{lightgrey}{rgb}{0.7,0.7,0.7}

\usepackage{textcomp}
\usepackage{mathrsfs}

\newcommand{\ignoretag}[1]{}

\newcommand{\Ph}{\mathcal{P}}

\newcommand{\xb}{\xi_0}
\newcommand{\Ub}{\overline{U}}
\newcommand{\Vb}{\overline{V}}

\newcommand{\bO}{\mathcal{O}}
\newcommand{\Ps}{\mathcal{P}}
\newcommand{\Peps}{\Ps_{\ueps}}
\newcommand{\ueps}{\epsilon}
\newcommand{\RP}{\mathds{RP}}
\newcommand{\Sx}{\mathcal{S}}
\newcommand{\Rx}{\mathcal{R}}
\newcommand{\Cx}{\mathcal{C}}

\title{Steady and self-similar inviscid flow\footnote{This material is based upon work supported by the
National Science Foundation under Grant No.\ NSF DMS-0907074}}
\author{Volker Elling and Joseph Roberts}

\begin{document}

\maketitle

\begin{abstract}
    We consider solutions of the 2-d compressible (isentropic) Euler equations that are steady and self-similar.
    They arise naturally at interaction points in genuinely multi-dimensional flow. 
    We characterize the possible solutions in the class of flows $L^\infty$-close to a constant supersonic background.
    As a special case we prove that solutions of 1-d Riemann problems are unique in the class of small $L^\infty$ functions.
    We also show that solutions of the backward-in-time Riemann problem are necessarily $\BV$.
\end{abstract}

\section{Introduction}

We consider systems of hyperbolic conservation laws in two dimensions:
\begin{alignat}{1}
    U_t + f^x(U)_x + f^y(U)_y &= 0. \notag
\end{alignat}
Most important are the 2-d \defm{compressible Euler equations} for motion of inviscid fluids: 
$U=(\rho,\rho v^x,\rho v^y)$ ($\rho$ density, $\vec v$ velocity) with fluxes
\begin{alignat}{1} &
    f^x(U) = v^xU + \begin{bmatrix}
        0 \\
        p \\
        0 
    \end{bmatrix}, \quad f^y(U) = v^yU + \begin{bmatrix}
        0 \\
        0 \\
        p 
    \end{bmatrix}.
\notag\end{alignat}
where $p=p(\rho)$ is pressure. 

Our aim is to increase understanding of genuinely multi-dimensional flow, in particular its wave interactions. 
Some examples are regular reflection (four shock waves meeting at a point) 
\cite{chen-feldman-selfsim-journal,elling-liu-pmeyer,canic-keyfitz-lieberman,yuxi-zheng-rref,henderson-etal,elling-rrefl-lax,elling-sonic-potf,elling-detachment}
or Mach reflection (three shocks meeting with a contact or another type of wave) 
\cite{ben-dor-book,ben-dor-shockwaves2006,hornung-reviews,hunter-tesdall,vasiliev-kraiko,skews-1997}.
In these cases there are distinguished points near which the flow is, to leading order, \emph{constant along rays} starting
in the point.
This leads to solutions that are self-similar and steady (from the point of view of an observer moving with the interaction point):
$$U(t,x,y) = U(\phi), \quad \phi=\measuredangle(x,y)\in\roi{0}{2\pi}. $$
In the case of Mach reflection the precise nature of the interaction remains controversial after decades of research. 
It is known that \defm{triple points} (three shocks, with smooth flow in between) are not possible in most reasonable
models (see \cite{neumann-1943}, \cite[Section 129]{courant-friedrichs}, \cite{henderson-menikoff}, \cite[Theorem 2.3]{serre-hbfluidmech}). 
However, beyond results for triple points and other special cases, 
the possible combinations have apparently never been classified systematically.
Such a classification is our ultimate goal.

We are particularly motivated by an example in \cite{elling-nuq-journal} which features a steady and self-similar
solution where two shocks and two contacts meet in a point. Numerical calculations suggest there is a second unsteady solution,
with the steady one as initial data, so that the Cauchy problem for the 2-d Euler equations would not have uniqueness,
at least in its current formulation. Naturally we wonder which other steady self-similar solutions exhibit this behaviour
and what characterizes them.

The literature on \defm{multi-dimensional Riemann problems} \cite{li-zhang-yang,zhang:593,yuxi-zheng-book,lax-xdliu} 
is somewhat related to our flow class. 
However, in those problems only the \emph{initial data} is necessarily constant along rays; we are interested
in the special case where the forward-in-time solution equals the initial data. On the other hand, much of that literature
focuses on initial data constant in each \emph{quadrant}, 
a setting that is apparently so restrictive that the numerical studies have not encountered
non-uniqueness phenomena like those observed in \cite{elling-nuq-journal}.

In this article we focus on the case where $U$ is a small (in $L^\infty$) perturbation 
of a constant \emph{supersonic} background state $\Ub$. 
Interestingly we do \emph{not} need to assume that $U$ is in $\BV$, the space of functions of bounded variation;
instead we will \emph{prove} it (under standard assumptions about $p(\rho)$).
This is crucial because in several space dimensions $\BV$ is probably too narrow to contain all reasonable flows \cite{rauch-bv},
in contrast to one space dimension where a satisfactory theory has been based on $\BV$  or closely related classes
\cite{glimm,glimm-lax,bianchini-bressan-j}.

Our results also apply to the classical case of 1-d Riemann problems for strictly hyperbolic conservation laws 
whose eigenvalues are either genuinely nonlinear or linearly degenerate: 
for sufficiently small jump, their self-similar forward solutions (see \cite{lax-claws-ii} or \cite[Chapter 11]{evans} for construction) 
are unique in the class of $L^\infty$ (rather than $\BV$) solutions with small norm 
(for related uniqueness results see \cite[Section 9.1]{dafermos-book} and 
\cite{dafermos-sbv-2008,bressan-goatin,bressan-lefloch,bressan-crasta-piccoli,liu-yang,oleinik-1959,kruzkov,smoller-uq-1969}).
This generalizes an earlier result of Heibig \cite{heibig} which required all eigenvalues to be genuinely nonlinear.
While uniqueness need not hold backward in time, we are able to show that small-$L^\infty$ solutions must be small-$\BV$
(which cannot be improved to any smaller commonly used class since examples with infinitely many jumps are easy to construct).

We now summarize our main result.  Consider the 2-d compressible isentropic Euler equations.   Let $U \in L^{\infty}$ be a steady, self-similar, entropy-admissible weak solution, with $||U-\Ub||_{L^{\infty}}< \epsilon$ for some supersonic background state $\Ub$ and $\epsilon > 0$.  If $\epsilon$ is sufficiently small, then $U \in BV$ and it must have the structure shown in Figure \myref{fig:test}:

\if\dofigures%
\begin{figure}[h]
\begin{center}
    \input{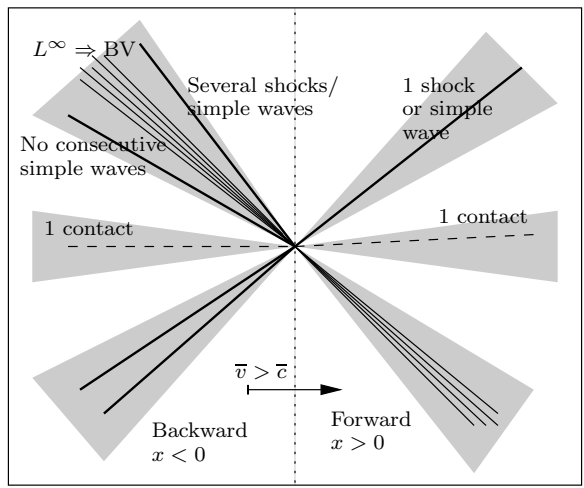}
\caption{$U$ must be constant outside narrow sectors specified by eigenvalues evaluated at $\Ub$. 
Linearly degenerate sectors: at most one contact discontinuity. 
Genuinely nonlinear forward sectors: at most one shock or simple wave.
Genuinely nonlinear backward sectors: infinitely many waves possible, but no
consecutive simple waves.  Here we have taken the background state to have horizontal velocity $(\overline{v},0)$ and sound speed $\overline{c}$.}
\mylabel{fig:test}
\end{center}
\end{figure}
\fi%

\section{Balance laws}

Let $\Ps \subset \R^m$ be an open set. 
Consider smooth functions $\eta,\psi^x,\psi^y:\Ps\rightarrow\R$.
For $A\subset\R^3$ we say $U=(U^1,...,U^m)\in L^1_{\loc}(\R^3;\Ps)$ is a \defm{weak solution} of
\begin{alignat}{1}
    \eta(U)_t + \psi^x(U)_x + \psi^y(U)_y &\leq 0 \qquad\text{in $A$} \myeqlabel{eq:entropy}
\end{alignat}
if the inequality is satisfied in the \defm{weak sense} (or: \defm{distributional sense}):
every $x\in A$ has an open neighbourhood $N$ so that for \emph{nonnegative} smooth $\Phi$ with $\supp\Phi\Subset N$,
\begin{alignat}{1}
    - \int_{\R^3} \Phi_t \eta(U) + \Phi_x \psi^x(U) + \Phi_y \psi^y(U) d(x,y,t) &\leq 0 \myeqlabel{eq:weak-entropy}
\end{alignat}
We call $U$ a \defm{strong solution} (or \defm{classical solution}) if, in addition, 
it is a.e.\ equal to a \emph{Lipschitz-continuous} function.

Weak solutions --- as well as other concepts --- for the \defm{system of conservation laws}
\begin{alignat}{1}
    U_t + f^x(U)_x + f^y(U)_y &= 0 \qquad\text{in $A$,} \myeqlabel{eq:claw}
\end{alignat}
with $f^x,f^y:\Ps\rightarrow\R^m$ smooth, are defined by interpreting
\myeqref{eq:claw} as $2m$ inequalities of the form \myeqref{eq:entropy}, 
with $=$ replaced by $\leq$ or $\geq$ and with $\eta(U):=U^\alpha$, $\psi^x(U):=f^{x\alpha}(U)$, $\psi^y(U):=f^{y\alpha}(U)$
for $\alpha=1,...,m$.

We call $(\eta,\psi^x,\psi^y)$ an \defm{entropy-flux pair} for \myeqref{eq:claw} if
\begin{alignat}{1} 
    \psi^x_U &= \eta_Uf^x_U,\quad \psi^y_U = \eta_Uf^y_U \quad\text{on $\Ps$}. \myeqlabel{eq:eef}
\end{alignat}
A weak solution $U$ of \myeqref{eq:claw} is called \defm{entropy solution} (or \defm{admissible}) if it satisfies 
\myeqref{eq:entropy} for all entropy-flux pairs with convex $\eta$. However, all results in our paper hold
even if we require \myeqref{eq:entropy} only for a \emph{single} entropy-flux pair with uniformly convex $\eta$.

(An alternative to entropy-based admissibility are Lax conditions; see Section 15, especially 
\myeqref{eq:forward-lax} and \myeqref{eq:backward-lax}, for Lax-type conditions suitable for our $L^\infty$ setting.)

\section{Steady and self-similar solutions}

We are interested in \defm{steady} solutions: $U$ is (a version\footnote{i.e.\ almost everywhere equal to} of a function that is) constant in $t$.
Integrating by parts in $t$ eliminates the first term in the integrand in \myeqref{eq:weak-entropy}, after which using compact-in-$t$ support and integrating with respect to $t$ yields the equivalent statement
\begin{alignat}{1}
    - \int_{\R^2} \Phi_x\psi^x(U)+\Phi_y\psi^y(U) d(x,y) &\leq 0 \myeqlabel{eq:weak-entropy-steady}
\end{alignat}
for all nonnegative smooth compactly supported (now taken to be $t$-independent) functions $\Phi:\R^2\rightarrow\R$.

In addition we require $U$ to be \defm{self-similar}:
a.e.\ equal to a function that is constant on each ray $\{s\cdot(x,y):s>0\}$ (for $(x,y)\neq 0$).
While the derivation of the weak form is elementary,
we show it in detail in order to point out some subtleties:
To derive the weak form, first consider nonnegative smooth $\Phi$ whose compact support
is contained in the halfplane $\{x>0\}$.
We may change variables in \myeqref{eq:weak-entropy-steady} to $(x,\xi)$ with $\xi=y/x$:
\begin{alignat}{1}
    0
    &\geq
    - \int_0^\infty \int_{\R} \Big( \Phi_x(x,x\xi) \psi^x\big(U(\xi)\big) 
    + \Phi_y(x,x\xi) \psi^y\big(U(\xi)\big) \Big) x~d\xi~dx.
    \myeqlabel{eq:wec}
\end{alignat}
We take
\begin{alignat}{1}
    \phi(\xi) 
    &:= 
    \int_0^\infty 1\cdot\Phi(x,x\xi) dx  
    = 
    -\int_0^\infty x \big( \Phi_x(x,x\xi) + \xi \Phi_y(x,x\xi) \big) dx 
\notag\end{alignat}
so that
\begin{alignat}{1}
    \phi_\xi(\xi) 
    &= 
    \int_0^\infty x \Phi_y(x,x\xi) dx,
    \notag\\
    \int_0^\infty x \Phi_x(x,x\xi) dx
    &=
    -
    \phi(\xi)
    -
    \xi \phi_\xi(\xi).
    \myeqlabel{eq:phi-Phi}
\end{alignat}
Then \myeqref{eq:wec} is equivalent to
\begin{alignat}{1}
    0 &\geq 
    \int_\R \phi(\xi)\psi^x\big(U(\xi)\big) - \phi_\xi(\xi) \Big( \psi^y\big(U(\xi)\big) - \xi \psi^x\big(U(\xi)\big) \Big) d\xi 
    \myeqlabel{eq:weak-entropy-stss}
\end{alignat}
for every smooth compactly supported nonnegative $\phi:\R\rightarrow\R$, since every $\phi$ arises from \myeqref{eq:phi-Phi} via
\begin{alignat}{1}&
    \Phi(x,x\xi) := \phi(\xi)\eta(x)
\notag\end{alignat}
where $\eta$ is any smooth function with support in $(0,\infty)$ and integral $1$. \myeqref{eq:weak-entropy-stss}
is the weak formulation of
\begin{alignat}{1}
    \big(\psi^y(U)-\xi\psi^x(U)\big)_\xi + \psi^x(U) &\leq 0. \myeqlabel{eq:entropy-stss}
\end{alignat}

By analogous calculations we obtain
\begin{alignat}{1}
    \big(f^y(U)-\xi f^x(U)\big)_\xi + f^x(U) &= 0. \myeqlabel{eq:claw-stss}
\end{alignat}
If $U$ is differentiable at $\xi$, \myeqref{eq:claw-stss} implies
\begin{alignat}{1}
    \big( f^y_U(U) - \xi f^x_U(U) \big) U_\xi &= 0. \myeqlabel{eq:claw-stss-diffed}
\end{alignat}

If we repeat these arguments for $x<0$, there is a single but crucial difference: the coordinate change to
\myeqref{eq:wec} produces an additional ``$-$'' from 
$$dy=|x|d\xi=-x~d\xi.$$ The sign is irrelevant for \myeqref{eq:claw-stss},
but the entropy inequality \myeqref{eq:entropy-stss} changes to
\begin{alignat}{1}
    \big(\psi^y(U)-\xi\psi^x(U)\big)_\xi + \psi^x(U) &\fbox{$\geq$}~0. \myeqlabel{eq:negentropy-stss}
\end{alignat}

\section{Smallness}

We restrict ourselves to the case where $U$ is $L^\infty$-close to a constant background state $\Ub\in\Ps$:
$$ \| U - \Ub \|_{L^\infty} \leq \ueps .$$
A finite number of times in this article, we choose $\ueps>0$ sufficiently small for some purpose.

\subsection{Entropy gradient}

For any entropy-flux pair $(\eta, \psi^x,\psi^y)$ and any $w\in\R^m$,
$$ \hat\eta(U):=\eta(U)+w\cdot U, \quad 
\hat\psi^i(U):=\psi^i(U)+w\cdot f^i(U) \qquad(i=x,y)
$$
defines another entropy-flux pair $(\hat\eta,\hat\psi^x,\hat\psi^y)$ since
$$ \hat \psi^i_U = \psi^i_U+w\cdot f^i_U = (\eta_U+w^T) f^i_U = \hat\eta_U f^i_U \qquad(i=x,y). $$
$\hat\eta_{UU}=\eta_{UU}$, so convexity is not affected.
By adding $w^T$ times \myeqref{eq:claw} to \myeqref{eq:entropy} (which is a linear operation,
hence compatible with weak formulation) we obtain
\begin{alignat}{1}
    \hat\eta(U)_t + \hat\psi^x(U)_x + \hat\psi^y(U)_y &\leq 0
\notag\end{alignat}
which is equivalent to \myeqref{eq:entropy} since we can reverse the operation using $-w^T$.
Hence we may assume, without loss of generality, that 
\begin{alignat}{1}
    \eta_U(\Ub) &= 0  \myeqlabel{eq:egrad}
\end{alignat}
and do so from now on.

\section{Eigenvalues}

From \myeqref{eq:claw-stss-diffed}, we see $U_\xi=0$ is implied if the matrix
$$ f^y_U(U)-\xi f^x_U(U) $$
is not singular. 
This suggests, as we show later, that $U$ is constant in sectors where the matrix 
\begin{alignat}{1}&
    f^y_U(\Ub)-\xi f^x_U(\Ub)  \myeqlabel{eq:inhom}
\end{alignat}
is far from singular, so that the interesting behaviour is concentrated near $\xi$ that satisfy
\begin{alignat}{1}
    0 &= \det\big(f^y_U(\Ub)-\xi f^x_U(\Ub)\big) := p(\xi) \myeqlabel{eq:hom}
\end{alignat}
for our constant background state $\Ub$.  The polynomial $p$ has up to $m$ real roots. 

Instead of focusing on one choice of coordinates, consider 
\begin{alignat}{1}
    P(x:y) &= \det\big(\vec x\times\vec f_U(\Ub)\big) 
    \myeqlabel{eq:bigP}
\end{alignat}
(with $\vec x=(x,y)$, $\vec f=(f^x,f^y)$ and $(a_1,b_1)\times(a_2,b_2):=a_1b_2-a_2b_1$) 
where we regard $(x:y)\in\RP^1$ as homogeneous coordinates; $P$ is a homogeneous polynomial of degree $\leq m$.
``$\times$'' is invariant under rotation, so that a coordinate change from $(x,y)$ to $(x',y')=R(x,y)$, $R$ any rotation matrix,
changes each root of $P$ from $(x:y)$ to $(x':y')$. Each root $\xi$ of $p$ corresponds to a root $(1:\xi)$ of $P$.

Since $p$ has $\leq m$ roots, we can find some $\xi$ which is not a root. The line $(1:\xi)$ associated to $\xi$ is,
by rotating coordinates, aligned with $(0:1)$. Then 
\begin{alignat}{1}
    P(0:1) &\neq 0 .\myeqlabel{eq:novert}
\end{alignat}
We assume from now on, without loss of generality, that this change has been made.

\section{Change of variables}

\subsection{Change to $V$}

\myeqref{eq:novert} also implies that $f^x_U$ is \emph{regular}. 
Therefore $f^x$ is a diffeomorphism if we choose $\ueps>0$ sufficiently small. 
Since we are using $U_t=0$, it is not important to work with conserved quantities and we may change to
$$ V:= f^x(U) ,\qquad \Vb:=f^x(\Ub),$$
and set 
$$ f(V):=f^y(U(V)) .$$
We let $\Ps$ be the open set of possible values for $V$ from now on and abbreviate
$$ \Peps := \Big\{V\in\Ps~\Big|~|V-\Vb|\leq\ueps\Big\}. $$
\myeqref{eq:claw-stss} 
becomes
\begin{alignat}{1}&
    (f(V)-\xi V)_\xi + V = 0. \myeqlabel{eq:claw-V}
\end{alignat}
At points of differentiability $\xi$ of $V$ we have
\begin{alignat}{1}
    \big(f_V(V(\xi))-\xi I\big)V_\xi(\xi) &= 0 .\myeqlabel{eq:claw-V-diffed}
\end{alignat}
These are the \emph{same} equations satisfied by a weak solution $V$
of a 1-d conservation law 
$$ V_t + f(V)_z = 0 $$
that is self-similar, i.e.\ 
$$V(z,t)=V(\xi),$$
if we identify $\xi=z/t$.
Hence our $x$ is a time-like variable while $y$ is space-like.
We could, for example, solve an initial-value problem by imposing data at a fixed $x$.
However, there is no well-posedness without an \emph{entropy inequality} which
is what identifies the forward and backward directions of time in physics.

\subsection{Entropy-Flux pairs} 

For any entropy-flux pair $(\eta, \psi^x, \psi^y)$, define

\begin{alignat}{1}
e(V) := \psi^x(U(V)), \qquad q(V) := \psi^y(U(V)).  \nonumber
\end{alignat}
Then
\begin{alignat}{1}
e_V = \psi^x_U U_V = \eta_U f^x_U U_V = \eta_U V_U U_V = \eta_U. \nonumber
\end{alignat}
Therefore,
\begin{alignat}{1}
e_{VV} = \eta_{UU} U_V = \eta_{UU} (f^x_U)^{-1}. \nonumber
\end{alignat}
We have
\begin{alignat}{1}
q_V = \psi^y_U U_V = \eta_U f^y_U U_V = e_V f_V, \nonumber
\end{alignat}
since
\begin{alignat}{1}
f_V = f^y_U U_V. \nonumber
\end{alignat}
Therefore, $(e, q)$ is an entropy-flux pair for \myeqref{eq:claw-V}.  The entropy inequality \myeqref{eq:entropy-stss} for $x>0$ becomes
\begin{alignat}{1}
    \big(q(V)-\xi e(V)\big)_\xi + e(V) &\leq 0,
    \myeqlabel{eq:entropy-V}
\end{alignat}
whereas
\begin{alignat}{1}
    \big(q(V)-\xi e(V) \big)_\xi + e(V) &\fbox{$\geq$}~ 0
    \myeqlabel{eq:negentropy-V}
\end{alignat}
for $x < 0$.

\subsection{Convex Entropy}

\begin{lemma}
If $f^x(U)$ is regular, then $e_{VV}r^{\alpha}r^{\alpha} \neq 0$ for all $\alpha = 1,..m$.  If $f^x(U)$ has only positive (negative) eigenvalues, then $e$ is uniformly convex (concave).
\end{lemma}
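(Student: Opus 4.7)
The plan is to exploit the formula $e_{VV} = \eta_{UU}(f^x_U)^{-1}$ established just above the lemma, together with the fact that $\eta_{UU} f^x_U$ is symmetric, a standard consequence of the entropy-flux identity. This makes $f^x_U$ self-adjoint with respect to the inner product induced by $\eta_{UU}$, yielding a basis of real eigenvectors to which the formula for $e_{VV}$ applies directly.

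First I would verify that $\eta_{UU} f^x_U$ is symmetric. Differentiating the entropy identity $\psi^x_U = \eta_U f^x_U$ in $U$ gives
\[
    \psi^x_{UU} = \eta_{UU} f^x_U + \eta_U \cdot f^x_{UU}.
\]
The left side is a Hessian, hence symmetric. The last term, componentwise $\sum_k \eta_{U_k} (f^x_k)_{UU}$, is symmetric since each $f^x_k$ has a symmetric Hessian. Hence $\eta_{UU} f^x_U$ is symmetric. Since $\eta_{UU}$ is uniformly positive definite (strict convexity of $\eta$), $f^x_U$ is self-adjoint for $\langle u, w \rangle_{\eta_{UU}} := u \cdot \eta_{UU} w$, so its eigenvalues $\lambda^\alpha$ are real and an $\eta_{UU}$-orthonormal eigenbasis $r^\alpha$ exists: $f^x_U r^\alpha = \lambda^\alpha r^\alpha$ with $r^\alpha \cdot \eta_{UU} r^\beta = \delta_{\alpha\beta}$.

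The key calculation is then a one-liner:
\[
    e_{VV} r^\alpha r^\beta = r^\alpha \cdot \eta_{UU}(f^x_U)^{-1} r^\beta = \frac{1}{\lambda^\beta} r^\alpha \cdot \eta_{UU} r^\beta = \frac{\delta_{\alpha\beta}}{\lambda^\beta}.
\]
Regularity of $f^x_U$ means no $\lambda^\alpha$ vanishes, so $e_{VV} r^\alpha r^\alpha = 1/\lambda^\alpha \neq 0$, which is the first claim. For the second, the display above shows that in the basis $\{r^\alpha\}$, $e_{VV}$ equals $\mathrm{diag}(1/\lambda^1, \ldots, 1/\lambda^m)$, so $e_{VV}$ is positive (respectively negative) definite precisely when every $\lambda^\alpha$ is positive (negative). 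Uniformity over $\Peps$ follows from continuity: once $\ueps$ is small, the $\lambda^\alpha(V)$ cannot change sign and stay bounded away from $0$ and $\infty$, while the $\eta_{UU}$-orthonormal $r^\alpha(V)$ stay uniformly bounded in Euclidean norm since $\eta_{UU}$ is uniformly coercive. I do not foresee a serious obstacle here; the main conceptual point is simply to recognize the $\eta_{UU}$-inner product with respect to which $f^x_U$ diagonalizes self-adjointly, after which everything reduces to a direct computation.
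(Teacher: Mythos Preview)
Your argument for the second claim (convexity/concavity of $e$) is correct and in fact cleaner than the paper's, which cites an external result (Serre's Proposition~6.1) to count signs; you instead diagonalize $f^x_U$ directly in the $\eta_{UU}$-inner product and read off $e_{VV}$ in that basis.

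There is, however, a genuine gap in the first claim. The $r^\alpha$ in the lemma are the right eigenvectors of $A(V)=f_V(V)=f^y_U(f^x_U)^{-1}$ (the objects introduced in the strict-hyperbolicity section and used throughout the rest of the paper), \emph{not} eigenvectors of $f^x_U$. Your one-line computation $e_{VV}r^\alpha r^\alpha=1/\lambda^\alpha$ is valid only for the latter, so you have proved a different statement. In the mixed-sign case your argument establishes only that $e_{VV}$ is a nondegenerate quadratic form---but an indefinite nondegenerate form can certainly have isotropic vectors, so nondegeneracy alone does not give $e_{VV}r^\alpha r^\alpha\neq 0$ for the intended $r^\alpha$. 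The paper supplies the missing ingredient: differentiating $q_V=e_Vf_V$ shows $e_{VV}f_V$ is symmetric, whence $(\lambda^\alpha-\lambda^\beta)\,e_{VV}r^\alpha r^\beta=0$ and so $e_{VV}r^\alpha r^\beta=0$ for $\alpha\neq\beta$ by strict hyperbolicity. Combined with the nondegeneracy of $e_{VV}$ (which you did prove), $e_{VV}r^\alpha r^\alpha=0$ would then force $e_{VV}r^\alpha=0$, a contradiction. Adding this orthogonality step to your proof would complete it.
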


\begin{proof}
We shall use Proposition 6.1 from \cite{serre-matrices-secondedition}.  
It states that if $H$ is symmetric positive definite, and $K$ is symmetric, then $HK$ is diagonalizable with real eigenvalues.  Moreover, the number of positive (negative) eigenvalues of $K$ equals the number of positive (negative) eigenvalues of $HK$.  First, we write
\begin{alignat}{1} 
(f^x_U)^{-1} = (\eta_{UU})^{-1} e_{VV}. \notag
\end{alignat}
$(\eta_{UU})^{-1}$ is symmetric positive definite, and $e_{VV}$ is symmetric.  Then, applying the proposition, since $(f^x_U)^{-1}$ is nondegenerate, all eigenvalues of $e_{VV}$ are nonzero.  Moreover, if all the eigenvalues of $(f^x_U)^{-1}$ are positive (negative), then $e_{VV}$ is positive (negative) definite, since a symmetric matrix is positive (negative) definite if and only if its eigenvalues are all positive (negative).  All that is left is to show that $e_{VV}r^{\alpha}r^{\alpha} \neq 0$  in the case of eigenvalues of mixed signs. \\

As in \cite[Lemma 4.3.3]{serre-1}, we consider
\begin{alignat}{1}
q_V = e_V f_V. \notag
\end{alignat}
Then,
\begin{alignat}{1}
q_{VV} = e_{VV} f_V + e_V f_{VV}.  \notag
\end{alignat}
Therefore,
\begin{alignat}{1}
e_{VV}f_V = q_{VV} - e_V f_{VV}. \notag
\end{alignat}
The first term on the right side is symmetric, and the second term on the right is a linear combination of symmetric matrices, and is thus symmetric.  Therefore, the left side is also symmetric and thus defines a symmetric bilinear form.  Then
\begin{alignat}{1}
 e_{VV} (f_V r^\alpha) r^\beta &= e_{VV} (f_V r^\beta) r^\alpha  \notag \\
 \lambda^\alpha e_{VV} r^\alpha r^\beta &= \lambda^\beta e_{VV} r^\beta r^\alpha \notag \\
 (\lambda_\alpha-\lambda_\beta) e_{VV} r^\alpha r^\beta &= 0. \notag
\end{alignat}
Therefore, for $\beta \neq \alpha$, $e_{VV}r^{\alpha}r^{\beta} = 0$ by strict hyperbolicity.  Suppose that
\begin{alignat}{1}
e_{VV}r^{\alpha} r^{\alpha} = 0. \notag
\end{alignat}
By bilinearity, this would imply that 
\begin{alignat}{1}
e_{VV}r^{\alpha}s = 0 \notag
\end{alignat}
for all $s \in \R^m$.  Therefore $e_{VV}r^{\alpha}$ must be the zero vector, but this contradicts the fact that $e_{VV}$ has all eigenvalues nonzero.  Therefore, for each $\alpha$,
\begin{alignat}{1}
e_{VV}r^{\alpha} r^{\alpha} \neq 0. \notag
\end{alignat}
\end{proof}

\section{Versions}

\label{section:vers}

Consider \myeqref{eq:claw-V}. $f(V(\xi))-\xi V(\xi)$ has a distributional derivative $-V\in L^\infty$,
so 
there is a $C\in\R^m$ so that 
\begin{alignat}{1}
    f(V(\xi))-\xi V(\xi) &= C - \int_0^\xi V(\eta) d\eta \qquad\text{for a.e.\ $\xi\in\R$.}
    \myeqlabel{eq:fint}
\end{alignat}
Analogously, \myeqref{eq:entropy-V} yields a $C'\in\R$ with
\begin{alignat}{1}
    q(V(\xi))-\xi e(V(\xi)) &\leq C' - \int_0^\xi e(V(\eta)) d\eta \qquad\text{for a.e.\ $\xi\in\R$.}
    \myeqlabel{eq:qint}
\end{alignat}
Since the left-hand sides are continuous functions of $V(\xi)$ and the right-hand sides continuous functions of $\xi$,
Lemma \myref{lem:version-adapt} from the appendix applied to \myeqref{eq:fint} (with $=$ split into $\leq,\geq$) and \myeqref{eq:qint}
yields a version (that is, an element of the $L^{\infty}$ equivalence class containing $V$, which we will continue to refer to as $V$) of $V$ that (a) has values in $\Peps$ \emph{everywhere}, and (b) so that for \emph{all} $\xi_1,\xi_2\in\R$
\begin{alignat}{1}
    -\int_{\xi_1}^{\xi_2} V(\eta) d\eta 
    &=
    \Big( f\big(V(\xi_2)\big) - \xi_2 V(\xi_2) \Big) - \Big( f\big(V(\xi_1)\big) - \xi_1 V(\xi_1) \Big)
    \qquad\text{and} \myeqlabel{eq:fintfull}\\
    -\int_{\xi_1}^{\xi_2} e\big(V(\eta)\big) d\eta 
    &\geq
    \Big( q\big(V(\xi_2)\big) - \xi_2 e\big(V(\xi_2)\big) \Big) - \Big( q\big(V(\xi_1)\big) - \xi_1 e\big(V(\xi_1)\big) \Big). 
\notag\end{alignat}
We abbreviate
\begin{alignat}{1}
    \hat A(V_0+\Delta V,V_0) &:= \int_0^1 f_V(V_0+s\Delta V) ds \myeqlabel{eq:hatAdef}
\end{alignat}
and obtain
\begin{alignat}{1}
    f(V_0+\Delta V)-f(V_0) &= \hat A(V_0+\Delta V,V_0) \Delta V,
\notag\end{alignat}
so
\begin{alignat}{1}
    \int_{\xi_1}^{\xi_2} V(\xi_2)-V(\eta) d\eta 
    &=
    \Big( \hat A\big(V(\xi_2),V(\xi_1)\big)-\xi_1 I \Big) \big( V(\xi_2)-V(\xi_1) \big).
    \myeqlabel{eq:Adiff}
\end{alignat}

\section{Strict hyperbolicity}

For the remainder of the paper we focus on the case of strict hyperbolicity. 
Many results would hold for weaker notions of hyperbolicity,
but we prefer to keep the presentation simple.
By \defm{strict hyperbolicity} we mean that
$P$ in \myeqref{eq:bigP} has exactly $m$ \emph{real} roots $(x:y)$ which are necessarily distinct.
That means 
$$ \det(f_V(\Vb)-\xi I) = 0 $$
has $m$ distinct real roots $\xi$.

$\hat A(\Vb,\Vb) = f_V(\Vb)$, so by smoothness of $\hat A$
we can take $\ueps>0$ so small that for $V^\pm\in\Peps$ there are $m$ real eigenvalues 
$\hat\lambda^\alpha(V^\pm)$ ($\alpha=1,...,m$) of $\hat A(V^\pm)$ which are smooth functions of $V^\pm$ and satisfy
\begin{alignat}{1}&
    \hat\lambda^\alpha(V^\pm) < \hat\lambda^{\alpha+1}(\tilde V^\pm) \qquad\forall V^\pm,\tilde V^\pm\in\Peps,~\alpha\in\{1,...,m-1\}.
    \myeqlabel{eq:stricthyperhat}
\end{alignat}
($\Peps$ is compact, so the separation is uniform, by continuity of $\hat\lambda^\alpha$.
The $\hat\lambda^\alpha$ must remain distinct and \emph{real} because their $m$ real parts are continuous functions of $V^\pm$,
hence remain distinct for $\ueps>0$ sufficiently small, 
so since $\hat A(V^\pm)$ is real it cannot have non-real eigenvalues 
which come in conjugate pairs which would yield two equal real parts.)

For $\alpha=1,...,m$ we choose a unit-length right eigenvector $\hat r^\alpha(V^\pm)$ 
of $\hat A(V^\pm)$ for eigenvalue $\hat\lambda^\alpha(V^\pm)$.
$\hat r^\alpha(V^\pm)$ is also a smooth function of $V^\pm$. We choose left eigenvectors $\hat l^\alpha(V^\pm)$ that satisfy 
$$ \hat l^\alpha \hat r^\beta = \delta_{\alpha\beta} \qquad (\alpha,\beta=1,...,m),$$
which implies they are smooth as well.

Abbreviate
$$ A(V) := \hat A(V,V) = f_V(V), \quad \lambda^\alpha(V):=\hat\lambda^\alpha(V,V),\quad r^\alpha(V):=\hat r^\alpha(V,V),\quad l^\alpha(V):=\hat l^\alpha(V,V).$$

\section{Left and right sequences}

In this article we do not assume $V\in\BV$, so $V$ need not have well-defined left or right limits at any point $\xi$.
Instead we consider pairs of sequences $(\tilde\xi^-_k),(\tilde\xi^+_k)$, both converging to $\xi$, with $\tilde\xi^-_k<\tilde\xi^+_k$
(we do not require $\tilde\xi^-_k<\xi<\tilde\xi^+_k$ yet).
Since $V$ has values in the compact set $\Peps$, there are subsequences $(\xi^+_k)$ of $(\tilde\xi^+_k)$
and $(\xi^-_k)$ of $(\tilde\xi^-_k)$ so that 
\begin{alignat}{1}
    V(\xi^+_k)\rightarrow V^+ \quad,\quad V(\xi^-_k)\rightarrow V^- . \myeqlabel{eq:Vleftright}
\end{alignat}
In such a context we write
$$ [g(V)] := g(V^+)-g(V^-) $$
for any function $g$ (assuming there is no ambiguity as to which sequences are meant).

Let 
$$ J(g(V);\xi) := \sup \big|[g(V)]\big| $$
where the $\sup$ is over all sequences $(\xi^\pm_k)$ with the properties above.
Then $J(g(V);\xi)=0$ if and only if $g\circ V$ is continuous at $\xi$.

By \myeqref{eq:Adiff},
\begin{alignat}{1}
    \Big(\hat A\big(V(\xi_k^+),V(\xi_k^-)\big)-\xi_k^- I \Big) \big(V(\xi^+_k)-V(\xi^-_k)\big)
    &= 
    \int_{\xi_k^-}^{\xi_k^+} V(\xi_k^+) - V(\eta) d\eta. 
\notag\end{alignat}
The limit as $k\rightarrow\infty$ is
\begin{alignat}{1}&
    \big(\hat A(V^\pm)-\xi I\big)[V] = 0. \myeqlabel{eq:Ajump}
\end{alignat}
Hence for some $\alpha\in\{1,...,m\}$
\begin{alignat}{1}    
    [V]\parallel \hat r^\alpha(V^\pm) \quad\text{and}\quad \xi = \hat\lambda^\alpha(V^\pm)
    \myeqlabel{eq:jump-spec}
\end{alignat}
(that is, $[V]$ is a scalar multiple of $\hat r^\alpha(V^\pm)$).
\myeqref{eq:Ajump} is equivalent to
\begin{alignat}{1}
    [f(V)] - \xi[V] &= 0 \myeqlabel{eq:rh}
\end{alignat}
which is the usual \defm{Rankine-Hugoniot condition}.
Hence we may use any standard result that does not require continuity on each side of $\xi$.

\section{General case}

In this section we collect results that do not require any assumption
(such as strict hyperbolicity, admissibility, genuine nonlinearity, ...).

\begin{theorem}
    \mylabel{th:Uconst}
    Suppose $V$ is continuous on an interval $I=\boi{\xi_1}{\xi_2}$ and that
    $\xi$ is not an eigenvalue of $A(V(\xi))$
    for any $\xi\in I$.
    Then $V$ is constant on $I$.
\end{theorem}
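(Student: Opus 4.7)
The plan is to use the identity \myeqref{eq:Adiff} derived from the integrated conservation law, together with the continuity of $V$ on $I$, to show that $V$ is locally constant near every $\xi_* \in I$, and then invoke connectedness of $I$.

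First I would fix $\xi_* \in I$ and use the hypothesis. By assumption, $\xi_*$ is not an eigenvalue of $A(V(\xi_*))=f_V(V(\xi_*))$, so $A(V(\xi_*))-\xi_* I$ is invertible. Since $\hat A$ is smooth, $V$ is continuous at $\xi_*$, and $\hat A(\Vb,\Vb)=A(\Vb)$, there is a neighbourhood $N=(\xi_*-\delta_0, \xi_*+\delta_0)\subset I$ and a constant $M<\infty$ such that
\begin{alignat}{1}
\big\|\big(\hat A(V(\xi_2),V(\xi_1))-\xi_1 I\big)^{-1}\big\| \leq M \qquad \forall \xi_1,\xi_2\in N. \notag
\end{alignat}

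Next I would run a continuity-modulus argument. Define, for $0<\delta\leq\delta_0$,
\begin{alignat}{1}
\Omega(\delta) := \sup\big\{|V(a)-V(b)| : a,b\in[\xi_*-\delta,\xi_*+\delta]\big\}. \notag
\end{alignat}
Because $V$ is continuous at $\xi_*$, $\Omega(\delta)\to 0$ as $\delta\to 0$. For $\xi_1,\xi_2\in[\xi_*-\delta,\xi_*+\delta]$, every $\eta$ in the integral on the right of \myeqref{eq:Adiff} lies in $[\xi_*-\delta,\xi_*+\delta]$, so $|V(\xi_2)-V(\eta)|\leq\Omega(\delta)$. Applying the bound on the inverse and estimating the integral gives
\begin{alignat}{1}
|V(\xi_2)-V(\xi_1)| \leq M\,\Omega(\delta)\,|\xi_2-\xi_1| \leq 2M\delta\,\Omega(\delta). \notag
\end{alignat}
Taking the supremum over $\xi_1,\xi_2$ yields $\Omega(\delta)\leq 2M\delta\,\Omega(\delta)$. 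For $\delta<1/(2M)$ this forces $\Omega(\delta)=0$, i.e.\ $V$ is constant on $[\xi_*-\delta,\xi_*+\delta]$.

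Finally I would globalise: the set $\{\xi\in I : V(\xi)=V(\xi_*)\}$ is nonempty, closed (by continuity of $V$), and, by the previous step applied at each of its points, open in $I$; hence it equals the connected set $I$. The main obstacle is really step two: ensuring that the inverse $(\hat A-\xi_1 I)^{-1}$ is bounded uniformly on a neighbourhood of $\xi_*$, which boils down to combining the non-eigenvalue hypothesis with the continuity of $V$ and the smoothness of $\hat A$; once that is in hand, the modulus-of-continuity closure is immediate.
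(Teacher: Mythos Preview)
Your proof is correct, and it takes a genuinely different route from the paper's. The paper argues in two stages: first it shows $V$ is Lipschitz at each $\xi\in I$ by a contradiction argument (if the difference quotients blew up, dividing \myeqref{eq:Adiff} by $|V(\xi+h_n)-V(\xi)|$ would force a uniformly regular matrix to kill a unit vector), and then, having Lipschitz, it differentiates to get $(A(V(\xi))-\xi I)V_\xi=0$ a.e., hence $V_\xi=0$ a.e., hence $V$ constant. You instead close an oscillation inequality directly: from \myeqref{eq:Adiff} and the uniform bound on $(\hat A-\xi_1 I)^{-1}$ you obtain $\Omega(\delta)\leq 2M\delta\,\Omega(\delta)$, which self-improves to $\Omega(\delta)=0$ for small $\delta$, and then connectedness finishes. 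Your argument is more elementary in that it never invokes a.e.\ differentiability of Lipschitz functions or the fundamental theorem of calculus; the paper's argument, on the other hand, isolates Lipschitz regularity as an intermediate conclusion, which is conceptually nice even if not reused elsewhere. One cosmetic point: the clause ``$\hat A(\Vb,\Vb)=A(\Vb)$'' is a slip---what you actually need (and use) is $\hat A\big(V(\xi_*),V(\xi_*)\big)=A\big(V(\xi_*)\big)$ together with the continuity of $V$ at $\xi_*$; also note that $\Omega(\delta)\to 0$ is not needed, only $\Omega(\delta)<\infty$, which follows from boundedness.
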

\begin{proof}
    Fix some $\xi \in I$.
    We claim that $V$ must be 
    Lipschitz at $\xi$.  
    Suppose not.  
    Then we can choose a sequence $\left\{h_n \right\} \rightarrow 0$ (with $h_n\neq 0$) such that 
    $$ 0 < \Big|\frac{V(\xi+h_n)-V(\xi)}{h_n}\Big| \nearrow\infty. $$
    Divide both sides of \myeqref{eq:Adiff} by $|V(\xi+h_n)-V(\xi)|$ to obtain
    \begin{alignat}{1}&
        \Big(\hat A\big(V(\xi+h_n),V(\xi)\big)-\xi I\Big) \frac{V(\xi+h_n)-V(\xi)}{|V(\xi+h_n)-V(\xi)|} 
        \notag\\&= 
        \frac{1}{|V(\xi+h_n)-V(\xi)|} \subeq{\int_{\xi}^{\xi+h_n} V(\xi+h_n)-V(\eta) d\eta }{=O(h_n)}
        = o(1)\quad\text{as $n\rightarrow\infty$}
        \myeqlabel{eq:AUU}
    \end{alignat}
    ($O(h_n)$ since $V$ is bounded). By assumption, $A(V(\xi))-\xi I$ is regular, 
    so for $h$ sufficiently small $\hat A\big(V(\xi+h),V(\xi)\big)-\xi I$ will be uniformly regular. That is,
    \[ \exists \delta>0~\forall v\in\R^m:\Big|\Big(\hat A\big(V(\xi+h),V(\xi)\big)-\xi I\Big) v\Big| \geq \delta|v| \]
    Taking $n \rightarrow \infty$, the left hand side of \myeqref{eq:AUU} stays bounded away from zero, 
    while the right hand side goes to zero, 
    leading to a contradiction.  

    Therefore, $V$ must be Lipschitz on $I$.
    Assuming $\xi$ is a point of differentiability of $V$, we obtain
    $$ \big(A(V(\xi))-\xi V(\xi)\big) V_\xi = 0.$$
    However, as we assumed the matrix was regular on $I$, it follows that $V_\xi=0$ a.e.\ on $I$.
    A Lipschitz function is the integral of its derivative, so $V$ is constant on $I$.
\end{proof}

\begin{theorem}
    \mylabel{th:Uconst2}%
    Consider an interval $I=\boi{\xi_1}{\xi_2}$.
    There is a $\delta_s=\delta_s(\ueps)>0$, with
    $$ \delta_s\downarrow 0\quad\text{as}\quad\ueps\downarrow 0, $$
    so that 
    \begin{alignat}{1}
        \forall\alpha\in\{1,...,m\}\forall x\in I: |\lambda^\alpha(V(\xi))-\xi| > \delta_s
        \myeqlabel{eq:lamdel}
    \end{alignat}
    implies $V$ is constant on $I$.
    [Here we do not require continuity of $V$, but a stronger bound on the spectrum.]
\end{theorem}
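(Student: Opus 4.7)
The plan is to rule out jumps in $V$ on $I$ using the Rankine--Hugoniot spectral constraint \eqref{eq:jump-spec}, then reduce to Theorem \ref{th:Uconst}. I would take $\delta_s := 2C\ueps$, where $C$ is a Lipschitz constant (independent of $\ueps$) for $\hat\lambda^\alpha$ on $\Peps\times\Peps$ and $\lambda^\alpha$ on $\Peps$; this clearly satisfies $\delta_s\downarrow 0$ as $\ueps\downarrow 0$. Fix $\xi\in I$ and any two sequences $\xi_k^\pm\to\xi$ with $\xi_k^-<\xi_k^+$ along which (by compactness of $\Peps$, after passing to subsequences) $V(\xi_k^\pm)\to V^\pm$. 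If $V^+\neq V^-$, then \eqref{eq:Ajump}--\eqref{eq:jump-spec} force $\xi=\hat\lambda^\alpha(V^+,V^-)$ for some $\alpha$. Combined with $\hat\lambda^\alpha(\Vb,\Vb)=\lambda^\alpha(\Vb)$ and the Lipschitz bound on $\hat\lambda^\alpha,\lambda^\alpha$, this yields $|\lambda^\alpha(V(\xi))-\xi|\leq C\ueps<\delta_s$, contradicting \eqref{eq:lamdel}. Hence $V^+=V^-$ for every such pair of sequences, so $J(V;\xi)=0$ at every $\xi\in I$.

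Next I would upgrade the absence of jumps to genuine continuity of the selected version of $V$. Applying \eqref{eq:fintfull} with $\xi_2=\xi$ fixed and $\xi_1=\xi_k\to\xi$ shows that any subsequential limit $V^\ast$ of $V(\xi_k)$ satisfies $f(V^\ast)-\xi V^\ast=f(V(\xi))-\xi V(\xi)$. Since $|\lambda^\alpha(\Vb)-\xi|$ is bounded below by $\delta_s-C\ueps>0$, the matrix $f_V-\xi I$ is uniformly invertible on $\Peps$ for $\ueps$ small enough; consequently $W\mapsto f(W)-\xi W$ is injective on $\Peps$, forcing $V^\ast=V(\xi)$. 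Therefore the chosen version of $V$ is continuous on $I$.

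Finally, the hypothesis \eqref{eq:lamdel} is strictly stronger than the non-eigenvalue assumption of Theorem \ref{th:Uconst}, so that theorem applies and yields $V$ constant on $I$. The main obstacle I anticipate is the middle paragraph: $J(V;\xi)=0$ is only a statement about sequential limits and does not a priori mean the fixed pointwise representative from the \textbf{Versions} section is a continuous function; one must combine \eqref{eq:fintfull} with uniform invertibility of $f_V-\xi I$ to close that gap and legitimately invoke Theorem \ref{th:Uconst}.
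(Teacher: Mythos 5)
Your overall strategy — rule out discontinuities via the Rankine--Hugoniot spectral constraint \eqref{eq:jump-spec}, then invoke Theorem \ref{th:Uconst} on the continuous remainder — is exactly the paper's approach. Two remarks, one on a real (if small) error and one on an unnecessary detour.

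First, the explicit choice $\delta_s := 2C\ueps$ with the claimed estimate $|\lambda^\alpha(V(\xi))-\xi|\leq C\ueps$ does not close. The identity you extract is $\xi=\hat\lambda^\alpha(V^+,V^-)$, so the quantity to bound is $\big|\hat\lambda^\alpha\big(V(\xi),V(\xi)\big)-\hat\lambda^\alpha(V^+,V^-)\big|$, and since each of $V(\xi),V^\pm\in\Peps$ can differ from the others by up to $2\ueps$, the Lipschitz bound yields something of size up to $4C\ueps$, which exceeds your $\delta_s$. The paper avoids this bookkeeping entirely by \emph{defining} $\delta_s:=\sup_{V,V^\pm\in\Peps}|\lambda^\alpha(V)-\hat\lambda^\alpha(V^\pm)|$, which is by construction the exact threshold needed and which tends to $0$ with $\ueps$ because $\lambda^\alpha$ and $\hat\lambda^\alpha$ agree on the diagonal. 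Your idea is right; you just need to either enlarge the constant or, better, adopt the paper's sup-definition.

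Second, the ``main obstacle'' you flag in the middle paragraph is not actually an obstacle. The definition of $J(V;\xi)$ in the Left and right sequences section only requires $\xi_k^-<\xi_k^+$ with both converging to $\xi$; in particular one of the two sequences is allowed to be the constant sequence $\equiv\xi$. Taking, say, $\xi_k^-=\xi$ and $\xi_k^+=\eta_k$ for any $\eta_k\downarrow\xi$ (or the mirror image) shows that $J(V;\xi)=0$ \emph{does} force $V(\eta_k)\to V(\xi)$ for every sequence $\eta_k\to\xi$, i.e.\ continuity of the chosen everywhere-defined version at $\xi$. This is precisely the ``$J(g(V);\xi)=0$ iff $g\circ V$ continuous at $\xi$'' equivalence stated in that section and used without further comment in the paper's proof. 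Your workaround via \eqref{eq:fintfull} and uniform invertibility of $f_V-\xi I$ is a valid alternative argument, but it is superfluous, and with $\delta_s=2C\ueps$ the needed uniform lower bound on $|\lambda^\alpha(W)-\xi|$ over $W\in\Peps$ is again lost to the same constant problem.
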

\begin{proof}
    Define
    $$ \delta_s := \sup_{V,V^\pm \in \Peps} |\lambda^\alpha(V) - \hat\lambda^\alpha(V^\pm)| $$
    and assume \myeqref{eq:lamdel} holds.
    The right-hand side converges to zero as $\ueps\searrow 0$ since $\lambda^\alpha,\hat\lambda^\alpha$ are smooth
    and coincide for $V=V^+=V^-$.

    Assume $V$ is discontinuous at $\xi\in I$. 
    Then we may choose $(\xi^+_k),(\xi^-_k)\rightarrow\xi$ with $V(\xi^\pm_k)\rightarrow V^\pm$ 
    and $[V]\neq 0$
    and obtain, by \myeqref{eq:jump-spec}, that 
    $$ \xi = \hat\lambda^\alpha(V^\pm) .$$ 
    But then 
    $$ |\xi - \lambda^\alpha(V(\xi))| \leq \delta_s, $$
    which contradicts \myeqref{eq:lamdel}.
    
    Hence $V$ is continuous on $I$; Theorem \myref{th:Uconst} yields the conclusion.
\end{proof}

\section{Vertical axis neighbourhood}

As explained in the context of \myeqref{eq:novert}, we may choose some $\xi\in\R$ that is not a root of
$p$ (see \myeqref{eq:inhom}) in the present coordinates and rotate coordinates so that $(1:\xi)$ is aligned with $(0:1)$
and therefore $(0:1)$ with $(-1:\xi)$. Then $-\xi$, by \myeqref{eq:novert}, is not a root of $p$ in new coordinates, so
Theorem \myref{th:Uconst2} shows (if $\ueps>0$ is sufficiently small) that 
$U(\eta)$ must be constant for $\eta$ in a neighbourhood of $-\xi$.
Rotating back to old coordinates it is constant --- and therefore a weak solution --- in
sufficiently narrow open convex cones containing the positive and negative vertical axis.  Therefore, we lost no generality 
by considering test functions supported away from the $y$-axis while deriving the weak form.

\section{Sectors}

By Theorem \myref{th:Uconst2}, we can choose $\ueps$ so small that there are intervals
$$ I^\alpha := \boi{\lambda^\alpha(\Vb)-\delta^\alpha}{\lambda^\alpha(\Vb)+\delta^\alpha} \qquad (\alpha=1,...,m) $$
for $\delta^\alpha>0$ so that $V$ is constant outside $\bigcup_{i=1}^mI^\alpha$. We may choose $\delta^\alpha\downarrow 0$ as $\ueps\downarrow 0$.
By \defm{forward sector} (see Figure \myref{fig:test})
we mean $\xi\in I^\alpha$ with $x>0$, whereas \defm{backward sector} refers to $x<0$.

\section{Genuine nonlinearity}

\begin{definition}
    We say $I^\alpha$ is \defm{genuinely nonlinear} if 
    \begin{alignat}{1}
        \forall V\in\Peps : \lambda^\alpha_V(V)r^\alpha(V) &> 0. \myeqlabel{eq:gennon}
    \end{alignat}
    (if $<0$ we may without loss of generality flip the sign of $r^\alpha(V),\hat r^\alpha(V^\pm)$ (which remain unit-length)
    and $l^\alpha(V),\hat l^\alpha(V^\pm)$).
    We say $I^\alpha$ is \defm{linearly degenerate} if 
    \begin{alignat}{1}
        \forall V\in\Peps : \lambda^\alpha_V(V)r^\alpha(V) &= 0. \myeqlabel{eq:lindeg}
    \end{alignat}
\end{definition}

\section{Simple waves}

\subsection{Simple wave curves}

Let $s\mapsto R^\alpha(V^-,s)$ solve
$$ R^\alpha(V^-,0)=V^-, \qquad R^\alpha_s(V^-,s) = r^\alpha(R^\alpha(V^-,s)). $$
$R^\alpha$ defines the \defm{$\alpha$-simple wave curve}. For each $V^-$ we take the interval for $s$ maximal 
so that $R^\alpha(V^-,s)\in\Peps$.

\subsection{Wave fans}

If $I^\alpha$ is genuinely nonlinear, then 
$$ \lambda^\alpha(R^\alpha(V^-,s))_s
= \lambda^\alpha_V(R^\alpha(V^-,s))r^\alpha(R^\alpha(V^-,s))
>0 ,$$
so 
\begin{alignat}{1}&
    s \mapsto \lambda^\alpha(R^\alpha(V^-,s)) \quad \text{is \emph{strictly increasing}.} \myeqlabel{eq:lam-xi}
\end{alignat}
Let $\xi\mapsto s(\xi)$ be its inverse map. By setting
$$ W(\xi) := R^\alpha(V^-,s(\xi)) \qquad\text{for $\xi\geq\lambda^\alpha(V^-)$,} $$
we obtain a strong solution of \myeqref{eq:claw-V-diffed} since 
$$ 
\Big(A(W(\xi)) - \xi I\Big) W_\xi
= \subeq{\Big(A\big(W(\xi)\big) - \lambda^\alpha\big(W(\xi)\big) I\Big) r^\alpha\big(W(\xi)\big)}{=0} s_\xi(\xi)
= 0
.$$
If we interpret $V^-$ as the value of $W$ at the \emph{smallest} $\xi$, 
then only the $s\geq 0$ part of $R^{\alpha}$, denoted $R^{\alpha+}$, is relevant.

\section{Discontinuities}

We recall some standard results we need later,
to show that they do not depend on having a smooth neighbourhood on each side of a discontinuity.

\subsection{Shock curves}

Consider sequences $(\xi^+_k)$ and $(\xi^-_k)$ converging to $\xi$, with $\xi^-_k<\xi^+_k$ for all $k$,
so that $V(\xi^\pm_k)\rightarrow V^\pm$. This is the setting of \myeqref{eq:jump-spec} which implies
$[V^\pm]$ is a right eigenvector of $\hat A(V^\pm)$ and $\xi$ the corresponding eigenvalue.
So there is an $\alpha\in\{1,...,m\}$ with 
$$ h(V^+,s) := V^+ - V^- - s\hat r^\alpha(V^\pm) = 0. $$
$h$ is smooth, $h(V^-,0)=0$ and
$$ \frac{\partial h}{\partial V^+}(V^-,0) = I, $$
so the implicit function theorem yields, after taking $\ueps>0$ sufficiently small,
existence of a smooth bijective map $s\mapsto S^\alpha(V^-,s)$ with 
$$ S^\alpha(V^-,0)=V^-, \qquad S^\alpha(V^-,s) - V^- - s\hat r^\alpha(V^\pm) = 0. $$
For each $V^-$ we take the interval for $s$ maximal so that $S^\alpha(V^-,s)\in\Peps$.
$S^\alpha(V^-,\cdot)$ defines the \defm{$\alpha$-shock curve} of $V^-$.
It contains $V^-$ (via $s=0$) and has tangent $r^\alpha(V^-)$ there.

We take $\ueps>0$ so small that for each $\alpha$ only $V^+=S^\alpha(V^-,s)$ are solutions of \myeqref{eq:rh}.

\subsection{Contact curves}

Assume $I^\alpha$ is linearly degenerate. 
Then 
$$ \lambda^\alpha(R^\alpha(V^-,s))_s
= \lambda^\alpha_V(R^\alpha(V^-,s))r^\alpha(R^\alpha(V^-,s))
\topref{eq:lindeg}{=} 0. $$
Hence 
\begin{alignat}{1}&
    s \mapsto \lambda^\alpha(R^\alpha(V^-,s)) \quad \text{is \emph{constant}.} \myeqlabel{eq:lam-const}
\end{alignat}

Now consider
$$ F(s) := f(R^\alpha(V^-,s)) - f(V^-) - \xi (R^\alpha(V^-,s) - V^-) .$$
Then $F(0)=0$, and
$$ F_s(s) = A(R^\alpha(V^-,s)) r^\alpha(R^\alpha(V^-,s)) - \xi r^\alpha(R^\alpha(V^-,s)) .$$
This is zero if we set $\xi=\lambda^\alpha(R^\alpha(V^-,s))$ which is possible since the latter is constant.
Hence the Rankine-Hugoniot condition \myeqref{eq:rh} is satisfied.

Since $R^\alpha$ is maximal in $\Peps$, since $S^\alpha$ is maximal as well and contains
the only points in $\Peps$ satisfying \myeqref{eq:rh}, and since both are simple smooth curves,
they are identical.

Hence, at $\xi$ where an $\alpha$-contact --- $[V]$ a right eigenvector for $\hat\lambda^\alpha(V^\pm)$ --- 
occurs, we have
\begin{alignat}{1}
    \lambda^\alpha(V^-) = \xi = \hat\lambda^\alpha(V^\pm) = \lambda^\alpha(V^+). \myeqlabel{eq:lam-xi-const}
\end{alignat}

\subsection{Admissible shock curve}

Now assume $I^\alpha$ is genuinely nonlinear.
Assume $V$ is admissible. Consider a \emph{forward} sector first. The entropy inequality
\begin{alignat}{1}&
    [q(V)] - \xi [e(V)] \leq 0 \myeqlabel{eq:rh-entropy}
\end{alignat}
can be derived from \myeqref{eq:entropy-V} in the same way as \myeqref{eq:rh} from \myeqref{eq:claw-V}.

By \myeqref{eq:jump-spec}, a jump from $V^-$ to $V^+$ must be located at $\xi=\hat\lambda^\alpha(V^+,V^-)$, and
$$ \hat\lambda^\alpha(V^-,V^-) = \lambda^\alpha(V^-), $$
so
\begin{alignat}{1}&
    \partial_1\hat\lambda^\alpha(V^-,V^-) + \partial_2\hat\lambda^\alpha(V^-,V^-) = \lambda^\alpha_V(V^-)
    \myeqlabel{eq:l12a}
\end{alignat}
Moreover,
$$ \hat\lambda^\alpha(V^-,V^+) = \hat\lambda^\alpha(V^+,V^-) $$
since
$$ \hat A(V^+,V^-) \topref{eq:hatAdef}{=} \int_0^1 f_V((1-s)V^-+sV^+)ds = \int_0^1 f_V(rV^-+(1-r)V^+)dr = \hat A(V^-,V^+).$$
Therefore
\begin{alignat}{1}&
    \partial_1\hat\lambda^\alpha(V^-,V^-) = \partial_2\hat\lambda^\alpha(V^-,V^-) .
    \myeqlabel{eq:l12b}
\end{alignat}
Combining \myeqref{eq:l12a} and \myeqref{eq:l12b} we have
$$ \partial_1\hat\lambda^\alpha(V^-,V^-) 
= \frac{\lambda^\alpha_V(V^-)}{2} 
= \partial_2\hat\lambda^\alpha(V^-,V^-) $$
Thus
\begin{alignat}{1}
    \hat\lambda^\alpha(S^\alpha(V^-,s),V^-)_s
    &\overset{s=0}{=}
    \partial_1\hat\lambda^\alpha(V^-,V^-)S^\alpha_s(V^-,0)
    =
    \frac12\lambda^\alpha_V(V^-)r^\alpha(V^-) > 0.
\notag\end{alignat}
Hence for $\ueps>0$ sufficiently small 
\begin{alignat}{1}
    s\mapsto\hat\lambda^\alpha(S^\alpha(V^-,s),V^-) \quad\text{is strictly increasing.}
    \myeqlabel{eq:lambda-shock}
\end{alignat}
We may reparametrize the $\alpha$-shock curve of $V^-$ to be $\hat\lambda^\alpha=\xi\mapsto W(\xi)$.

Abbreviate $\xb:=\lambda^\alpha(V^-)$.
To avoid clutter we change coordinates so that $e(V^-)=0$, $q(V^-)=0$, $f(V^-)=0$, $W(\xb)=V^-=0$
(which is acceptable since adding constants to $V,f,e$ or $q$ has no effect in \myeqref{eq:claw-V} and \myeqref{eq:entropy-V}).
\myeqref{eq:rh} becomes
$$ 0 = f(W(\xi)) - \xi W(\xi) ,$$
with derivative
\begin{alignat}{1}&
   0 = (f_V(W)-\xi I) W_\xi - W.
   \myeqlabel{eq:fder}
\end{alignat}
\myeqref{eq:rh-entropy} is equivalent to $E(\xi)\leq 0$ for
$$ E(\xi) := q(W(\xi)) - \xi e(W(\xi)) .$$
We analyze the situation near $\xi=\xb$.  Since
$$ E(\xb) = q(W(\xb)) - \xb e(W(\xb)) = q(0) - \xb e(0) = 0,$$
so we need to consider the first derivative, given by
$$ E_\xi = (q_V-\xi e_V)W_\xi - e = e_V(f_V-\xi I)W_\xi - e \topref{eq:fder}{=} e_VW - e .$$
Then
$$ E_\xi(\xb) = e_V(W(\xb))\subeq{W(\xb)}{=0} - \subeq{e(W(\xb))}{=0} = 0. $$
Hence we need to consider the second derivative as well:
$$ E_{\xi\xi} = e_{VV} W_\xi W + e_V W_\xi - e_V W_\xi = e_{VV} W_\xi W .$$
Then
$$ E_{\xi\xi}(\xb) = e_{VV}(W(\xb)) W_\xi(\xb) \subeq{W(\xb)}{=0} = 0.$$
The third derivative finally yields a result:
$$ E_{\xi\xi\xi} = (e_{VVV} W_\xi W_\xi + e_{VV} W_{\xi\xi}) W + e_{VV} W_\xi W_\xi $$
so
$$ E_{\xi\xi\xi}(\xb) \overset{W(\xb)=0}{=} 
e_{VV}(W(\xb)) W_\xi(\xb) W_\xi(\xb) \neq 0, $$
because $W(\xi_0) = V^-$ and $W_\xi(\xb) = r^\alpha(V^-)$ by definition of the shock curve.

Hence, if $e_{VV}r^{\alpha}r^{\alpha} > 0$, for $\ueps>0$ sufficiently small, 
$$ E\leq 0 \quad\Leftrightarrow\quad \xi\leq\xb. $$
Therefore, in this case, only the $s\leq 0$ part of the shock curve (corresponding to $\xi\leq\xb$ 
due to $\hat\lambda^\alpha$ strictly increasing) yields admissible shocks. We call this part $S^{\alpha-}$.

If $e_{VV}r^{\alpha}r^{\alpha} < 0$, then the $\xi \geq \xb$ part is relevant. 

To this end, if 
$$e_{VV}r^\alpha r^\alpha > 0,$$ 
we define the \textit{``forward sector''} to have $x>0$, and the \textit{``backward sector''} to have $x<0$.  Since everything is smooth and this quantity can never be zero, it must be positive for all $V^- \in \Peps$ if it is positive anywhere (vice versa for negative).

Conversely, if 
$$e_{VV}r^{\alpha}r^{\alpha} < 0,$$
the \textit{``forward sector''} has $x<0$ and the \textit{``backward sector''} has $x>0$.  
 
We can consider the same setting but for an $x<0$ sector: an analogous argument, starting with the opposite entropy inequality (21).

Moreover, (42) shows that admissible shocks in \textit{forward} sectors satisfy the \defm{Lax condition}
\begin{alignat}{1}
    \lambda(V^-) > \xi > \lambda(V^+).
\notag\end{alignat}
More precisely the following \defm{uniform} Lax condition holds for \textit{forward} sector shocks:
there is a constant $\delta_L>0$ so that
\begin{alignat}{1}
\ignoretag{44}    \lambda(V^-) - \delta_L\big|[V]\big| \geq \xi \geq \lambda(V^+) + \delta_L\big|[V]\big|. \myeqlabel{eq:forward-lax}
\end{alignat}
Finally we consider the same setting but for a \textit{backward} sector: 
\begin{alignat}{1}
\ignoretag{45}    \lambda(V^-) + \delta_L\big|[V]\big| \leq \xi \leq \lambda(V^+) - \delta_L\big|[V]\big|. \myeqlabel{eq:backward-lax}
\end{alignat}

If the background state for Euler flow is supersonic horizontal velocity to the right, then all forward sectors are $x>0$.  If, however, the background state is supersonic horizontal velocity to the left, then all forward sectors are $x<0$.  For 1-dimensional conservation laws with convex entropy, all forward sectors are $t>0$.  Finally, it is not hard to construct examples that satisfy all assumptions but have $f^x_U$ possessing eigenvalues with different signs, so in certain cases there can be forward sectors for some eigenvalues contained in $x>0$ and others in $x<0$.

\section{Linearly degenerate sectors}

We consider linearly degenerate $I^\alpha$ and allow both $x>0$ and $x<0$.

\begin{lemma}
    \mylabel{lem:contver}%
    (a) $\lambda^\alpha\circ V$ is continuous.
    (b) If $\xi\neq\lambda^\alpha(V(\xi))$ on an open set $A\subset I^\alpha$,
    then $V$ is constant on $A$.
\end{lemma}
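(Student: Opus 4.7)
My plan is to exploit the linearly degenerate structure of $I^\alpha$: by the preceding subsection the shock curve $S^\alpha$ coincides with the simple wave curve $R^\alpha$, and along it $\lambda^\alpha$ is constant, so any discontinuity of $V$ landing in $I^\alpha$ must preserve $\lambda^\alpha \circ V$ across it (see \myeqref{eq:lam-xi-const}). Granted this, (a) follows from a pair-of-sequences argument and (b) reduces to Theorem \myref{th:Uconst} once continuity of $V$ on $A$ is established.

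For (a), fix $\xi^* \in I^\alpha$. Given any $\xi_k \to \xi^*$ I extract a subsequence so that $V(\xi_k) \to V_*$ in $\Peps$ and aim to show $\lambda^\alpha(V_*) = \lambda^\alpha(V(\xi^*))$. If $V_* = V(\xi^*)$ there is nothing to show; otherwise I form the pair $(\xi^-_k, \xi^+_k)$ taking one of the two sequences to be $\xi_k$ and the other the constant $\xi^*$, ordered so that $\xi^-_k < \xi^+_k$. Then $\{V^-, V^+\} = \{V_*, V(\xi^*)\}$ and $V^+ \neq V^-$, so \myeqref{eq:Ajump} and \myeqref{eq:jump-spec} yield $\xi^* = \hat\lambda^\beta(V^\pm)$ with $[V] \parallel \hat r^\beta(V^\pm)$ for some $\beta$. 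Strict hyperbolicity, uniform separation of the $\hat\lambda^\beta$ on $\Peps$, and $\xi^* \in I^\alpha$ force $\beta = \alpha$ for $\ueps$ sufficiently small. The identification of $S^\alpha$ with $R^\alpha$ in the linearly degenerate case then gives $\lambda^\alpha(V^-) = \lambda^\alpha(V^+) = \xi^*$, and in particular $\lambda^\alpha(V_*) = \lambda^\alpha(V(\xi^*))$.

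For (b), part (a) makes $g(\xi) := \lambda^\alpha(V(\xi)) - \xi$ continuous on $A$, and by hypothesis $g$ never vanishes. Suppose $V$ has a jump at some $\xi^* \in A$; the same pair argument as in (a), applied to the pair whose upper index is the constant sequence $\xi^*$, yields $\lambda^\alpha(V(\xi^*)) = \xi^*$, i.e.\ $g(\xi^*) = 0$, a contradiction. Hence $V$ is continuous on $A$. To invoke Theorem \myref{th:Uconst} on an arbitrary closed subinterval of $A$ I must rule out $\xi$ being an eigenvalue of $A(V(\xi))$: for $\beta = \alpha$ this is the hypothesis, and for $\beta \neq \alpha$ it follows because $\xi \in I^\alpha$ lies uniformly far from $\lambda^\beta(\Vb)$ by strict hyperbolicity, and hence from $\lambda^\beta(V(\xi))$ by smallness of $\ueps$. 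Theorem \myref{th:Uconst} then gives $V$ constant on each connected component of $A$.

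The main subtlety is not conflating the pointwise representative $V(\xi^*)$ with one-sided limits $V^\pm$: a priori the chosen version could pick a value at $\xi^*$ belonging to neither side. The fix is that the version from Section \myref{section:vers} makes \myeqref{eq:Adiff} hold for \emph{all} $\xi_1, \xi_2$, so the degenerate pair $(\xi^-_k, \xi^+_k = \xi^*)$ enters the jump analysis with $V^+ = V(\xi^*)$ and pins down $\lambda^\alpha(V(\xi^*))$ through the linearly degenerate shock-curve structure, closing both (a) and (b).
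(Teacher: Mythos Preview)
Your argument is correct and follows essentially the same route as the paper's proof: both use the jump analysis \myeqref{eq:jump-spec} together with \myeqref{eq:lam-xi-const} to force $\lambda^\alpha(V^+)=\lambda^\alpha(V^-)=\xi^*$ whenever a discontinuity is present in $I^\alpha$, and then invoke Theorem~\myref{th:Uconst} for part (b). Your write-up is more explicit than the paper's in two places --- the use of the constant sequence $\xi^*$ to tie the pointwise value $V(\xi^*)$ into the pair framework, and the verification that $V$ itself (not just $\lambda^\alpha\circ V$) is continuous on $A$ before applying Theorem~\myref{th:Uconst} --- both of which the paper leaves implicit.
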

\begin{proof}
    Assume $\lambda^\alpha\circ V$ and therefore $V$ are discontinuous at $\xb\in I^\alpha$. Then we can choose $(\xi^\pm_k)\rightarrow\xb$ 
    with $V(\xi^\pm_k)\rightarrow V^\pm$ so that
    $$ [\lambda^\alpha(V)] \neq 0 .$$
    However, since $I^\alpha$ is linearly degenerate and $V^+$ is on the $\alpha$-simple wave curve of $V^-$, 
    \myeqref{eq:lam-xi-const} shows
    $$ \lambda^\alpha(V^+) = \lambda^\alpha(V^-) = \xb, $$
    contradicting the assumption that $\lambda^\alpha \circ V$ is discontinuous at $\xi_0$. This shows (a).
    Theorem \myref{th:Uconst} yields (b).
\end{proof}

\begin{lemma}
	\mylabel{lem:subseq}%
For any subset $E \subset I^{\alpha}$, and for almost every $\xi_0 \in E$, there exists $D \subset E$ containing $\xi_0$ such that
\[(V_{|D})'(\xi_0) \mbox{ exists and is finite. }\]
\end{lemma}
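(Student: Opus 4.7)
This statement is essentially a soft measure-theoretic fact about bounded measurable functions: neither the integral identity \eqref{eq:fintfull} nor the linear degeneracy of $I^\alpha$ need be invoked. My plan is to build $D$ as $\{\xi_0\}$ together with a carefully chosen sequence $\xi_k \in E$ converging to $\xi_0$ so fast that $|V(\xi_k) - V(\xi_0)| / |\xi_k - \xi_0| \to 0$, giving the trivial derivative $(V|_D)'(\xi_0) = 0$.

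After fixing a Lebesgue-point representative of $V$ (admissible since $V \in L^\infty$ and almost every point is a Lebesgue point), I would combine two standard consequences of the Lebesgue differentiation theorem. First, for a.e.\ $\xi_0 \in E$, the Lebesgue density theorem gives that $\xi_0$ is a density point of $E$. Second, for a.e.\ $\xi_0$, the function $V$ is approximately continuous at $\xi_0$: for every $\eps > 0$ the set $\{\xi : |V(\xi) - V(\xi_0)| < \eps\}$ has density $1$ at $\xi_0$. This second fact follows by applying Chebyshev's inequality to $\int_{B_r(\xi_0)} |V - V(\xi_0)| \, d\eta = o(r)$. Restricting attention to the full-measure subset of $E$ on which both conditions hold, I conclude that for every $\eps > 0$ the set
\[
E_\eps \;:=\; E \cap \{\xi : |V(\xi) - V(\xi_0)| < \eps\}
\]
has Lebesgue density $1$ at $\xi_0$.

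For each sufficiently large integer $k$, the open annulus $A_k := \{\xi : 1/k < |\xi - \xi_0| < 2/k\}$ has measure $2/k$, and by the density property $E_{1/k^2} \cap A_k$ has measure $(1-o(1)) \cdot 2/k$ as $k \to \infty$; in particular it is nonempty, so I choose any $\xi_k$ in this intersection. Setting $D := \{\xi_0\} \cup \{\xi_k\}_{k \gg 1}$, the inclusions $D \subset E$ and $\xi_0 \in D$ are clear, while by construction
\[
\left|\frac{V(\xi_k) - V(\xi_0)}{\xi_k - \xi_0}\right| \;\leq\; \frac{1/k^2}{1/k} \;=\; \frac{1}{k} \;\xrightarrow[k\to\infty]{}\; 0.
\]
The only accumulation point of $D$ in $D$ is $\xi_0$ (the points $\xi_k$ are mutually isolated in $D$ since $|\xi_k - \xi_0| \geq 1/k$), so the derivative $(V|_D)'(\xi_0)$ equals $\lim_{k} (V(\xi_k) - V(\xi_0))/(\xi_k - \xi_0) = 0$, which is certainly finite.

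The main obstacle is less in the execution than in recognising that no PDE input is needed here: the hyperbolic structure and the context of the surrounding section play no role for this particular statement, and once the approximate-continuity framework is in hand, the construction of the ``thin'' set $D$ along which difference quotients vanish is essentially forced by choosing $\xi_k$ in the annulus $A_k$ together with the smallness set $E_{1/k^2}$.
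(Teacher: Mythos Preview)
Your construction has a concrete failure: take $V(\xi)=\xi$ (or any nonconstant Lipschitz function) and $E=I^\alpha$. At any $\xi_0$ the set $\{\xi:|V(\xi)-V(\xi_0)|<1/k^2\}$ is the interval of radius $1/k^2$ about $\xi_0$, which is disjoint from the annulus $A_k$ of inner radius $1/k$ once $k\ge 2$; thus $E_{1/k^2}\cap A_k=\emptyset$ and your $\xi_k$ simply do not exist. The gap is the sentence ``by the density property $E_{1/k^2}\cap A_k$ has measure $(1-o(1))\cdot 2/k$'': approximate continuity gives density~$1$ at $\xi_0$ for each \emph{fixed} level $\eps$, but you are letting $\eps=1/k^2$ shrink simultaneously with the scale $1/k$, and no uniformity is available. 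More structurally, your strategy forces $(V|_D)'(\xi_0)=0$, which is impossible for $V(\xi)=\xi$ (every difference quotient equals $1$), so no tweak of $\eps_k$ can rescue the scheme.

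Your meta-claim that ``no PDE input is needed'' is also at odds with the situation: for maps $\R\to\R^m$ with $m\ge 2$ there is no general measure-theoretic guarantee of subsequential differentiability (this is why the cited result in \cite{elling-ctlip} requires domain dimension $\ge$ target dimension). The paper's proof uses the hyperbolic structure in an essential way: it applies the scalar result from \cite{elling-ctlip} only to the single component $\xi\mapsto l^\alpha(\Vb)V(\xi)$, and for each $\beta\neq\alpha$ it multiplies \eqref{eq:Adiff} by $\hat l^\beta$ and uses the eigenvalue separation \eqref{eq:stricthyperhat} to obtain a genuine Lipschitz bound $|\hat l^\beta(V(\xi),V(\xi_0))(V(\xi)-V(\xi_0))|\le M'|\xi-\xi_0|$. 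Boundedness of these $m-1$ difference quotients then allows passing to further subsequences, and a local-diffeomorphism argument reassembles the components into differentiability of $V|_D$ itself. So the integral identity \eqref{eq:fintfull}/\eqref{eq:Adiff} is doing real work here, reducing the vector-valued problem to a scalar one.
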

\begin{proof}
The idea is to use \cite[Corollary 1]{elling-ctlip} to obtain differentiability after restriction to a subsequence.  However, the result on which the Corollary depends is only true for functions from a subset of $\R^n$ to $\R^m$, with $n \geq m$, which need not be the case for our $V: \R \rightarrow \R^m$.  We instead apply the result to the following function from $E \subset \R$ to $\R$. 
For the background state $\Vb$ and $\xi \in I^\alpha$, define the function
\[ \xi \mapsto l^\alpha(\Vb)V(\xi).\]
 \cite{elling-ctlip} proves that for any $E \subset I^\alpha$, for almost all $\xi_0 \in E$ there exists $D'$ with $E \supset D' \ni \xi_0$ such that 
\[ \big( l^{\alpha}(\Vb)V _{|D'}\big) ' (\xi_0) \mbox{ exists and is finite.} \]
Recalling \myeqref{eq:Adiff}, we have
\[ \Big(\hat{A}(V(\xi),V(\xi_0)) - \xi_0 I \Big) \big(V(\xi)-V(\xi_0)\big) = \int_{\xi_0} ^{\xi} V(\xi)-V(\eta)\, d \eta .\]
 For $\beta \neq \alpha$ multiply $\hat{l}^\beta\big(V(\xi),V(\xi_0)\big)$ on the left to obtain
\[\Big(\hat{\lambda}^{\beta}(V(\xi),V(\xi_0))-\xi_0\Big)\hat{l}^{\beta}(V(\xi),V(\xi_0)) \big(V(\xi)-V(\xi_0)\big) 
= \hat{l}^{\beta}(V(\xi),V(\xi_0)) \int_{\xi_0} ^{\xi} V(\xi)-V(\eta)\, d \eta. \]
We then estimate, for $M$ only depending on $\Ph_\epsilon$:
\[M|\xi-\xi_0| \geq \big|\hat{\lambda}^{\beta}(V,V_0)-\xi_0 \big| \big|\hat{l}^{\beta}(V,V_0) \big(V(\xi)-V(\xi_0)\big)\big| .\]
\myeqref{eq:stricthyperhat} bounds $|\hat{\lambda}^{\beta}(V,V_0)-\xi_0|$ from 0; so, with some other constant $M'$ we have
\[M'|\xi-\xi_0| \geq \big|\hat{l}^{\beta}(V,V_0) \big(V(\xi)-V(\xi_0)\big)\big| .\]
Therefore, 
\[ \xi \mapsto \hat{l}^\beta \big(V(\xi), V(\xi_0)\big)\big(V(\xi)-V(\xi_0)\big) \]
(which is 0 at $\xi = \xi_0$) is Lipschitz at $\xi_0$ with constant $\leq M'$.  This implies that the difference quotients
\[ \frac{\hat{l}^\beta \big(V(\xi), V(\xi_0)\big)\big(V(\xi)-V(\xi_0)\big) - \hat{l}^\beta\big(V(\xi_0), V(\xi_0)\big)\big(V(\xi_0)-V(\xi_0)\big)}{\xi - \xi_0}\]
\[ = \frac{\hat{l}^\beta \big(V(\xi), V(\xi_0)\big)\big(V(\xi)-V(\xi_0)\big)}{\xi - \xi_0} \]
are contained in $B_{M'}(0)$, a compact set in $\R^m$  (for $\xi$ sufficiently close to $\xi_0$).  Therefore, for each $\beta \neq \alpha$ we can successively pass to nested subsequences in $D'$ so that we finally obtain $D$ such that
\[ \big(l^\alpha(\Vb)V_{|D} \big) ' (\xi_0), \qquad \big(\hat{l}^\beta\big(V,V(\xi_0)\big)V_{|D} \big) ' (\xi_0) \mbox{ exist and are finite}\]
(where $D \subset D' \subset E \subset I^\alpha$ and $\xi_0 \in D$).
We now claim that
\[ W \mapsto g^\alpha(W) := l^\alpha(\Vb)W\]
and 
\[W \mapsto g^\beta(W) := \hat{l}^\beta\big(W,V(\xi_0)\big)\big(W-V(\xi_0)\big) \qquad (\beta \neq \alpha) \]
yield a local diffeomorphism $\Ph_\epsilon \ni W \mapsto g(W) := \big(g^1(W),...,g^m(W)\big)$ (for possibly smaller $\epsilon$).
To see this, notice that for $\beta \neq \alpha$
\[0 = g^\beta_W \big(V(\xi_0)\big)z = l^\beta \big(V(\xi_0)\big)z  \quad\Rightarrow\quad z \parallel r^\alpha(V(\xi_0)).\]
Then,
\[ 0 = g^\alpha_W \big(V(\xi_0)\big)z = l^\alpha(\Vb)z= l^\alpha\big(V(\xi_0)\big)z + \bO(\epsilon)z \quad\Rightarrow\quad z=0.\]
Since $(g \circ V)_{|D}$ is differentiable at $\xi_0$ and $g$ is a local diffeomorphism, we have that $V_{|D}$ is differentiable at $\xi_0$, and the lemma is proved.
\end{proof}

\begin{theorem}
    \mylabel{th:lindeg}%
    On a linearly degenerate (forward or backward) sector, 
    $V$ is either constant, or constant on each side of a single contact discontinuity.
\end{theorem}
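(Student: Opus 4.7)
The plan is to combine Lemmas~\myref{lem:contver} and~\myref{lem:subseq} with the defining property of linear degeneracy. Define
$$ C := \{\xi \in I^\alpha : \xi = \lambda^\alpha(V(\xi))\}.$$
By Lemma~\myref{lem:contver}(a), $\lambda^\alpha \circ V$ is continuous, so $C$ is closed in $I^\alpha$; by Lemma~\myref{lem:contver}(b), $V$ is constant on each connected component of the open set $I^\alpha \setminus C$. Hence it suffices to prove that $C$ contains at most one point.

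Suppose for contradiction that $\xi_1 < \xi_2$ both lie in $C$. First I would rule out gaps: if $(a,b)$ is a connected component of $[\xi_1,\xi_2] \setminus C$ with $a,b \in C$, then $V$ equals some constant $V_0$ on $(a,b)$, and continuity of $\lambda^\alpha \circ V$ at both endpoints forces $a = \lambda^\alpha(V_0) = b$, a contradiction. Thus $[\xi_1,\xi_2] \subset C$.

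Next, applying Lemma~\myref{lem:subseq} with $E = [\xi_1,\xi_2]$, I would select $\xi_0$ in the interior which is simultaneously a Lebesgue point of $V$ and admits a set $D \subset [\xi_1,\xi_2] \subset C$ with $\xi_0 \in D$ and $(V_{|D})'(\xi_0)$ existing. Dividing \eqref{eq:Adiff} by $\xi - \xi_0$ and letting $\xi \to \xi_0$ through $D$ kills the integral term (by the Lebesgue point property combined with continuity of $V_{|D}$ at $\xi_0$) and yields
$$\bigl(A(V(\xi_0))-\xi_0 I\bigr)\,(V_{|D})'(\xi_0) = 0.$$
Since $\xi_0 = \lambda^\alpha(V(\xi_0))$ is a simple eigenvalue by strict hyperbolicity, $(V_{|D})'(\xi_0) = c\, r^\alpha(V(\xi_0))$ for some scalar $c$. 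On the other hand, $\lambda^\alpha(V(\xi)) = \xi$ on $D$ gives, via the chain rule,
$$ 1 = \lambda^\alpha_V(V(\xi_0))\,(V_{|D})'(\xi_0) = c\, \lambda^\alpha_V(V(\xi_0))\, r^\alpha(V(\xi_0)) = 0$$
by the linear-degeneracy identity \eqref{eq:lindeg}, a contradiction.

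Once $C$ has been shown to be empty or a singleton $\{\xi^*\}$, the conclusion follows quickly: in the empty case $V$ is constant on the connected set $I^\alpha$; in the singleton case $V$ is constant on each side of $\xi^*$, and any nontrivial jump there must satisfy \eqref{eq:rh}, which in the linearly degenerate situation places $V^+$ on the contact curve of $V^-$ at $\xi^* = \lambda^\alpha(V^\pm)$ by the discussion in the contact-curve subsection. The main obstacle is the second half of the uniqueness step for $C$: one must extract enough pointwise differential information \emph{inside} the closed set $C$ (despite $V$ being only $L^\infty$) to contrast the eigenvector equation with the $\lambda^\alpha \circ V = \mathrm{id}$ identity, and this is precisely what Lemma~\myref{lem:subseq} was designed to deliver.
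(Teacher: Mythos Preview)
Your proof is correct and follows essentially the same route as the paper: define the coincidence set $C=\{\xi:\lambda^\alpha(V(\xi))=\xi\}$, use Lemma~\myref{lem:contver} to see it is closed with $V$ constant off it, rule out gaps so that $C$ is an interval, and then invoke Lemma~\myref{lem:subseq} on that interval to extract a point where the eigenvector relation $(A(V(\xi_0))-\xi_0 I)(V_{|D})'(\xi_0)=0$ conflicts with $(\lambda^\alpha\circ V)_{|D}'=1$ via linear degeneracy. The only cosmetic difference is how you pass to the eigenvector relation: you divide \eqref{eq:Adiff} by $\xi-\xi_0$ and use the Lebesgue-point property to kill the averaged integral, whereas the paper first observes from \eqref{eq:fintfull} that $f(V)-\xi V$ is Lipschitz, hence a.e.\ differentiable with $(f(V)-\xi V)_\xi+V=0$ in the strong sense, and then applies the chain rule along $D$. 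Both arguments are valid and amount to the same computation.
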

\begin{proof}
    By Lemma \myref{lem:contver}, 
    $F:=\{\xi\in I^\alpha~|~\xi=\lambda^\alpha(U(\xi))\}$ is closed and
    $V$ is constant on $I^\alpha\backslash F$.
    
    Assume there are $\xi_1,\xi_2\in F$ and $\eta\in I^\alpha$ with $\xi_1<\eta<\xi_2$.
    Then we can choose a maximal $\boi{\eta^-}{\eta^+}$ containing $\eta$ but not meeting $F$.
    Necessarily $\eta^\pm\in F$, so $\eta^+=\lambda^\alpha(V(\eta^+))$ and $\eta^-=\lambda^\alpha(V(\eta^-))$.
    But $V$ is constant on $\boi{\eta^-}{\eta^+}$, so $\eta^+=\eta^-$, which is a contradiction.

    Hence $F$ must be a closed interval.

    Assume $F$ has positive length. By \myeqref{eq:fintfull}, $f(V)-\xi V$ is Lipschitz and 
    therefore differentiable on $E \subset F$ (where $F \setminus E$ has measure zero) with
    \begin{alignat}{1}
   \big(f(V)-\xi V\big)_\xi + V = 0 \qquad \mbox{ on $E$.} \myeqlabel{eq:fyx} 
   \end{alignat}
   Note that this is in the strong sense, not just distributionally.
   By Lemma \myref{lem:subseq}, for almost every $\xi \in E$ we can find $D \subset E$ containing $\xi$ such that $V_{|D}$ is differentiable at $\xi$.  Thus if $F$ has positive length, there exists 
   $\xi \in D \subset E \subset F$ such that \myeqref{eq:fyx} holds and $V_{|D}$ is differentiable at $\xi$.  Therefore, we have
   \[\Big(f_V\big(V(\xi)\big)-\xi I\Big) \partial_\xi V_{|D}(\xi) = 0, \]
  so $\partial_\xi V_{|D}(\xi)\parallel r^\alpha(V(\xi))$, hence
    $$ \lambda^\alpha_V(V(\xi)) \partial_\xi V_{|D}(\xi) \topref{eq:lindeg}{=} 0 $$
    by linear degeneracy. 
    However, 
    $$ \xi = \lambda^\alpha(V(\xi)) $$
    implies
    $$ 1 = \lambda^\alpha_V(V(\xi)) \partial_\xi V_{|D}(\xi), $$
   which is a contradiction.
    
    Hence $F$ must be a point (or empty, which can but need not be ruled out). 
\end{proof}

\section{Genuinely nonlinear sectors}

Consider a genuinely nonlinear $I^\alpha$.
Consider either the forward or the backward sector.
We partition $I^\alpha$ into the three sets 
\begin{alignat}{1}
    \Sx &:= \{\xi\in I^\alpha~|~J(V;\xi)>0\}, \\
    \Rx &:= \{\xi\in I^\alpha~|~J(V;\xi)=0,~\xi=\lambda_k(V(\xi))\}, \\
    \Cx &:= \{\xi\in I^\alpha~|~J(V;\xi)=0,~\xi\neq\lambda_k(V(\xi))\},
\end{alignat}
where $\Sx$ stands for ``shock'', $\Rx$ for ``resonance'', $\Cx$ for ``constant''.
Complements (denoted by $\complement$) are taken with respect to $I^\alpha$.

\subsection{Backward sectors}

Consider a \textit{backward} sector ($\sgn(x)=-\sgn(e_{VV}r^\alpha r^\alpha)$).  Assume $V$ is admissible.

First we observe crucially that shocks of admissible $V$ must have a left and right neighbourhood in each of which $V$
is constant. The neighbourhood size is lower-bounded proportionally to the shock strength.

\begin{figure}[h]
        \centerline{\input{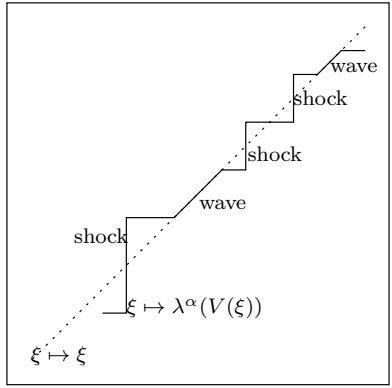}}
        \caption{For admissible $V$, each shock has a constant neighbourhood with lower size bound proportional to the shock strength.
            Reason: only in-admissible shocks could jump $\lambda(V(\xi))$ back to $\xi$ immediately. For backward sectors, consecutive shocks or shocks interspersed with compression waves are possible.}
        \mylabel{fig:backshock}
\end{figure}

\begin{theorem}
    \mylabel{th:shock-nbh}%
    For any $\xb\in \Sx$ there are $\sigma^+(\xb)>\xb$ (maximal) and $\sigma^-(\xb)<\xb$ (minimal)
    so that $V$ is constant on $\roi{\sigma^-(\xb)}{\xb},\loi{\xb}{\sigma^+(\xb)}\subset \overline{I^\alpha}$.
    Moreover
    $\sigma^\pm(\xb)\in \Rx\cup\partial I^\alpha$,
    \begin{alignat}{1}
        \sigma^-(\xb) &\leq \xb-\delta_LJ(V;\xb), \myeqlabel{eq:shocknbhL} \\
        \sigma^+(\xb) &\geq \xb+\delta_LJ(V;\xb), \myeqlabel{eq:shocknbhR}
    \end{alignat}
    and
    \begin{alignat}{1}
        &\lambda^\alpha(V(\xb+))-\xb \geq \delta_L J(V;\xb), \myeqlabel{eq:ulax2} \\
        &\lambda^\alpha(V(\xb-))-\xb \leq -\delta_L J(V;\xb) \myeqlabel{eq:llax2}
    \end{alignat}
    (where $\delta_L$ is as in \myeqref{eq:backward-lax}).
\end{theorem}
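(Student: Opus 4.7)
\emph{Overall approach.} My plan is to combine the backward-sector Lax condition \myeqref{eq:backward-lax} with the constancy criteria of Theorems \myref{th:Uconst} and \myref{th:Uconst2} in a bootstrap argument. The three ingredients are: (i) establishing that $V$ has genuine one-sided limits $V(\xb\pm)$ despite being only $L^\infty$, (ii) extracting the characteristic-speed separations \myeqref{eq:ulax2}--\myeqref{eq:llax2} from admissibility at the shock, and (iii) using the resulting gap to rule out any further shock too close to $\xb$, thereby propagating constancy outward to \myeqref{eq:shocknbhR}--\myeqref{eq:shocknbhL}.

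\emph{Step 1: one-sided limits.} I claim $V^\pm := V(\xb\pm)$ exist as genuine limits, not merely subsequentially. Suppose two distinct subsequential right limits $V^+_1\neq V^+_2$ were attained along sequences approaching $\xb^+$. Interleaving them in the framework of Section~8 and applying \myeqref{eq:jump-spec} together with \myeqref{eq:backward-lax} produces Lax-type inequalities for the ``virtual jump'' $(V^+_1,V^+_2)$. Swapping the ordering of the two sequences reverses those inequalities, and the two sets cannot hold simultaneously unless $V^+_1=V^+_2$. The left case is symmetric, and the sup in the definition of $J(V;\xb)$ is attained by pairing the two one-sided limits, so $J(V;\xb)=|V^+-V^-|$.

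\emph{Steps 2--3: bounds and propagation.} Inequalities \myeqref{eq:ulax2} and \myeqref{eq:llax2} follow immediately from \myeqref{eq:backward-lax} applied to the jump $V^-\to V^+$ at $\xb$. For \myeqref{eq:shocknbhR}, set $\sigma^+(\xb):=\sup\{\sigma>\xb:V\equiv V^+\text{ on }(\xb,\sigma)\}$. Right-continuity of $V$ at $\xb$, smoothness of $\lambda^\alpha$, and \myeqref{eq:ulax2} combine (for $\ueps$ small enough that the relevant separation exceeds $\delta_s$) to yield $\lambda^\alpha(V(\xi))-\xi>\delta_s$ on a right neighborhood of $\xb$, so Theorem \myref{th:Uconst2} forces constancy there and $\sigma^+(\xb)>\xb$. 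Suppose for contradiction that $\sigma^+(\xb)<\xb+\delta_LJ(V;\xb)$. On $(\xb,\sigma^+(\xb))$ the value is $V^+$, so $\lambda^\alpha(V^+)>\sigma^+(\xb)$ by \myeqref{eq:ulax2} with a strictly positive gap. Step~1 applied at $\sigma^+(\xb)$ produces a right limit $V^{++}$. If $V^{++}\neq V^+$, then \myeqref{eq:backward-lax} applied to the shock $V^+\to V^{++}$ at $\sigma^+(\xb)$ forces $\lambda^\alpha(V^+)\leq\sigma^+(\xb)-\delta_L|V^{++}-V^+|<\sigma^+(\xb)$, contradicting the gap; if $V^{++}=V^+$, right-continuity together with the persisting gap and Theorem \myref{th:Uconst2} extends constancy past $\sigma^+(\xb)$, contradicting maximality. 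The bound \myeqref{eq:shocknbhL} follows symmetrically. For the endpoint claim $\sigma^\pm(\xb)\in\Rx\cup\partial I^\alpha$, rerunning the same dichotomy at the actual maximal $\sigma^+(\xb)$ in the interior of $I^\alpha$ shows that the only configuration compatible with maximality is $\sigma^+(\xb)=\lambda^\alpha(V^+)=\lambda^\alpha(V(\sigma^+(\xb)))$, placing $\sigma^+(\xb)$ in $\Rx$.

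\emph{Main obstacle.} The principal difficulty is Step~1. Without $\BV$ regularity, $V$ a priori only has subsequential one-sided limits, which could in principle disagree, and standard Rankine--Hugoniot reasoning alone is insufficient to rule this out. The decisive observation is that admissibility rigidly constrains every ``virtual jump'' between two such limits via \myeqref{eq:backward-lax}, and comparing the two possible orderings of the interleaved sequences yields mutually inconsistent Lax inequalities unless the limits agree. Once genuine one-sided limits are in hand, the remainder is a self-contained interplay between admissibility and the constancy criteria, with the shock-strength lower bound emerging directly from \myeqref{eq:ulax2} rather than from any additional regularity on $V$.
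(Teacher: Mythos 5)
Your Step~1 (establishing that the one-sided limits $V(\xb\pm)$ exist as \emph{genuine} limits by interleaving two putative subsequential limits and observing that the backward Lax condition would have to hold for the virtual jump in both orderings) is a valid and rather elegant observation; it is not in the paper, which instead extracts one-sided limits only after constancy of $V$ on $\roi{\sigma^-(\xb)}{\xb},\loi{\xb}{\sigma^+(\xb)}$ is already known.

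The gap is in Step~2, and it is not cosmetic. You invoke Theorem \myref{th:Uconst2}, whose hypothesis requires $|\lambda^\alpha(V(\xi))-\xi|>\delta_s$ on the interval in question, where $\delta_s=\delta_s(\ueps)$ is the \emph{fixed} spectral gap from that theorem's proof. The only lower bound you actually obtain (from right-continuity and \myeqref{eq:ulax2}) is $\lambda^\alpha(V(\xi))-\xi\approx\lambda^\alpha(V^+)-\xb\ge\delta_L J(V;\xb)$. The quantity $J(V;\xb)$ is the strength of the particular shock at $\xb$ and can be arbitrarily small independently of $\ueps$; your parenthetical ``for $\ueps$ small enough that the relevant separation exceeds $\delta_s$'' does not make this true, since shrinking $\ueps$ does not bound $J(V;\xb)$ from below (and also shrinks $\delta_s$). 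So Theorem \myref{th:Uconst2} simply does not apply, and the conclusion $\sigma^+(\xb)>\xb$ is not justified. The same problem recurs in your Step~3 in the case $V^{++}=V^+$, where you again try to extend constancy past $\sigma^+(\xb)$ via Theorem \myref{th:Uconst2} with a gap that can fall below $\delta_s$.

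The correct tool is Theorem \myref{th:Uconst} (which only needs $\lambda^\alpha(V(\xi))\neq\xi$, not a uniform lower bound, but requires $V$ continuous), and the way to earn its continuity hypothesis is the \emph{sign} of $\lambda^\alpha(V(\xi))-\xi$, not its magnitude. The paper's proof first shows $\lambda^\alpha(V(\xi))-\xi>0$ on a maximal right interval $\boi{\xb}{\sigma^+(\xb)}$: if a shock at some $\xi$ in that interval existed, the backward Lax condition at $\xi$ would force $\lambda^\alpha(V(\eta))-\eta<0$ for $\eta$ just left of $\xi$, contradicting the positive sign inherited from $\xb$. Once continuity is established this way, Theorem \myref{th:Uconst} gives constancy, and the one-sided limits, \myeqref{eq:ulax2}--\myeqref{eq:llax2}, and the quantitative bounds \myeqref{eq:shocknbhL}--\myeqref{eq:shocknbhR} all follow. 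Your Step~1 would let you shorten the sign argument slightly (right-continuity at $\xb$ plus $\lambda^\alpha(V^+)>\xb$ immediately gives a positive sign on a right neighborhood), but the sign-vs-magnitude distinction is essential and the reliance on Theorem \myref{th:Uconst2} must be replaced.
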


\begin{proof}
    (See Figure \myref{fig:backshock}.)

    1. Assume $V$ is discontinuous at $\xb$.
    Then we can choose a strictly decreasing sequence $(\xi^+_k)\downarrow\xb$ and another
    sequence $(\xi^-_k)\rightarrow\xb$ so that $\xi^-_k<\xi^+_k$ and $V(\xi^\pm_k)\rightarrow V^\pm$.
    The backward Lax condition \myeqref{eq:backward-lax} implies $\lambda^\alpha(V^+)-\xb>0$.

    Assume there is no $\delta>0$ so that $\lambda^\alpha(V(\xi))-\xi>0$ for $\xi\in\boi{\xb}{\xb+\delta}$.
    Then we can rename $(\xi^+_k)$ to $(\xi^-_k)$ and $V^+$ to $V^-$ (replacing the previous choice)
    and choose a new decreasing sequence $(\xi^+_k)\downarrow\xb$ 
    so that $\lambda^\alpha(V(\xi^+_k))-\xi^+_k\leq 0$ and so that $V(\xi^+_k)\rightarrow V^+$.
    We may assume, by omitting members from both sequences, that $\xi^-_k<\xi^+_k$ for all $k$.
    Then the backward Lax condition \myeqref{eq:backward-lax} yields $\lambda^\alpha(V^+)-\xb>0$, but that implies
    $\lambda^\alpha(V(\xi^+_k))-\xi^+_k>0$ for $k$ sufficiently large, which is a contradiction.

    Thus we may choose a maximal $\sigma^+(\xb)\in \overline{I^\alpha}\cap\boi{\xb}{\infty}$ so that
    \begin{alignat}{1}
        &\forall \xi\in\boi{\xb}{\sigma^+(\xb)} : \lambda^\alpha(V(\xi))-\xi > 0. \myeqlabel{eq:lax-upper}
    \end{alignat}
    Analogously we obtain a minimal $\sigma^-(\xb)\in \overline{I^\alpha}\cap\boi{-\infty}{\xb}$ so that 
    \begin{alignat}{1}
        &\forall \xi\in\boi{\sigma^-(\xb)}{\xb} : \lambda^\alpha(V(\xi))-\xi < 0. \myeqlabel{eq:lax-lower} 
    \end{alignat}
    
    2. If $\loi{\xb}{\sigma^+(\xb)}$ contained a $\xi\in \Sx$, then we could choose 
    \begin{alignat}{1}&
        \eta\in\boi{\xb}{\sigma^+(\xb)} \cap \boi{\sigma^-(\xi)}{\xi}
    \notag\end{alignat}
    so that
    \begin{alignat}{1}&
        \lambda^\alpha(V(\eta)) 
        \overset{\text{\myeqref{eq:lax-upper} for $\xb$}}{>} \eta 
        \overset{\text{\myeqref{eq:lax-lower} for $\xi$}}{>} \lambda^\alpha(V(\eta)),
    \notag\end{alignat}
    which is a contradiction. 
    Hence 
    \begin{alignat}{1}&
        \text{$V$ is continuous at every $\xi\in\loi{\xb}{\sigma^+(\xb)}$.} \myeqlabel{eq:Vcont}
    \end{alignat}
    By Theorem \myref{th:Uconst}, \myeqref{eq:lax-upper} combined with $\xi\neq\lambda^\beta(V(\xi))$ for $\beta\neq \alpha$ 
    (by definition of $I^\alpha$) yields 
    \begin{alignat}{1}&
        \text{$V$ is constant on $\loi{\xb}{\sigma^+(\xb)}$.} \myeqlabel{eq:Vconst}
    \end{alignat}
    Analogously we show $V$ is constant on $\roi{\sigma^-(\xb)}{\xb}$. 
    Then we may take any $(\xi^\pm_k)\rightarrow\xb$ with $\xi^-_k<\xb<\xi^+_k$ and 
    and $V(\xi^\pm_k)\rightarrow V^\pm$ and obtain
    \myeqref{eq:ulax2} and \myeqref{eq:llax2} from the backward Lax condition \myeqref{eq:backward-lax}.
    
    3. The boundary $\sigma^+(\xb)$ with the property \myeqref{eq:lax-upper} is maximal.
    If it is not a boundary point of $I^\alpha$, then 
    there is a sequence $(\eta_n)\downarrow\sigma^+(\xb)$ in $I^\alpha$ with
    \begin{alignat}{1}
        \lambda^\alpha(V(\eta_n)) - \eta_n &\leq 0.
    \notag\end{alignat}
    By \myeqref{eq:Vcont} that means
    \begin{alignat}{1}
        \lambda^\alpha(V(\sigma^+(\xb))) - \sigma^+(\xb) &\leq 0.
    \notag\end{alignat}
    On the other hand, \myeqref{eq:lax-upper} and \myeqref{eq:Vcont} show $<0$ is not possible, so
    \begin{alignat}{1}&
        \sigma^+(\xb) = \lambda^\alpha(V(\sigma^+(\xb))) 
    \notag\end{alignat}
    which means $\sigma^+(\xb)\in \Rx$, and
    \begin{alignat}{1}
        \sigma^+(\xb) = \lambda^\alpha(V(\sigma^+(\xb))) \topref{eq:Vconst}{=} \lambda^\alpha(V(\xb+))
        \topref{eq:ulax2}{\geq} \xb+\delta_L J(V;\xi)
        \notag\end{alignat}
    which implies \myeqref{eq:shocknbhR}. Analogously we obtain \myeqref{eq:shocknbhL}.
\end{proof}

\begin{remark}
    \mylabel{rem:Ddiscrete}%
    In particular $\Sx$ is discrete, hence countable.
    (This does not imply $V\in\BV(I^\alpha)$ yet until we also show the continuous part of $V$ has finite variation.)
\end{remark}

Since we have shown now that $V$ has well-defined left and right limits in each discontinuity, 
we may modify $V$ in each $\xi\in\Sx$ to be the \emph{right} limit. 

\begin{lemma}
    \mylabel{lem:Fshock}%
    There is a constant $C_\Sx$, independent of $V$, so that for any $\xb\in \Sx$,
    $$\xi\not\in\boi{\sigma^-(\xb)}{\sigma^+(\xb)}$$
    implies
    \begin{alignat}{1}
        J(V;\xb) , \big|\lambda^\alpha(V(\xb+))-\xb\big|, \big|\lambda^\alpha(V(\xb-))-\xb\big| \leq C_\Sx|\xi-\xb|. \myeqlabel{eq:lamlip}
    \end{alignat}
\end{lemma}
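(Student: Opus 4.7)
The plan is to treat the three quantities in \myeqref{eq:lamlip} separately, reducing everything to the jump size $J(V;\xb)$ via Theorem \myref{th:shock-nbh} and then estimating $|\lambda^\alpha(V(\xb\pm))-\xb|$ by smoothness of $\hat\lambda^\alpha$ combined with the spectral characterization of jumps.

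First I would handle $J(V;\xb)$. The assumption $\xi\notin\boi{\sigma^-(\xb)}{\sigma^+(\xb)}$ means either $\xi\geq\sigma^+(\xb)$ or $\xi\leq\sigma^-(\xb)$, so
\[ |\xi-\xb| \geq \min\big(\sigma^+(\xb)-\xb,\ \xb-\sigma^-(\xb)\big) \geq \delta_L J(V;\xb) \]
by \myeqref{eq:shocknbhR} and \myeqref{eq:shocknbhL}. Thus $J(V;\xb) \leq \delta_L^{-1}|\xi-\xb|$.

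Next, for the eigenvalue differences, I would exploit that $\xb$ is located exactly at the averaged eigenvalue: by \myeqref{eq:jump-spec}, $\xb = \hat\lambda^\alpha\big(V(\xb+),V(\xb-)\big)$ (here $V(\xb\pm)$ are well-defined, thanks to the constant neighbourhoods from Theorem \myref{th:shock-nbh}, so the sup in the definition of $J$ is attained and $J(V;\xb) = |V(\xb+)-V(\xb-)|$). Since $\hat\lambda^\alpha$ is smooth on $\Peps\times\Peps$ with $\hat\lambda^\alpha(V,V) = \lambda^\alpha(V)$, there is a constant $L$ (depending only on $\Peps$) such that
\[ \big|\lambda^\alpha(V(\xb\pm)) - \xb\big| = \big|\hat\lambda^\alpha(V(\xb\pm),V(\xb\pm)) - \hat\lambda^\alpha(V(\xb+),V(\xb-))\big| \leq L\,|V(\xb+)-V(\xb-)| = L\,J(V;\xb). \]
Combining this with the previous bound yields $|\lambda^\alpha(V(\xb\pm))-\xb|\leq L\delta_L^{-1}|\xi-\xb|$. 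Setting $C_\Sx := \max(\delta_L^{-1},\,L\delta_L^{-1})$ establishes \myeqref{eq:lamlip}. Since $L$ depends only on $\Peps$ and $\delta_L$ only on the background data, $C_\Sx$ is indeed independent of $V$.

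There is no real obstacle here — the lemma is essentially a repackaging of Theorem \myref{th:shock-nbh}. The only point to be careful about is that one has genuine left/right limits at $\xb$ so that \myeqref{eq:jump-spec} can be applied to the specific pair $(V(\xb+),V(\xb-))$ (rather than to some subsequential limits): this is exactly what Theorem \myref{th:shock-nbh} guarantees via constant neighbourhoods on $\roi{\sigma^-(\xb)}{\xb}$ and $\loi{\xb}{\sigma^+(\xb)}$.
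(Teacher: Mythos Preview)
Your proof is correct and follows the same two-step outline as the paper: first bound $J(V;\xb)$ by $\delta_L^{-1}|\xi-\xb|$ via \myeqref{eq:shocknbhL} and \myeqref{eq:shocknbhR}, then bound $|\lambda^\alpha(V(\xb\pm))-\xb|$ by a constant multiple of $J(V;\xb)$. The first step is identical to the paper's.

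For the second step the two arguments diverge. The paper cites \myeqref{eq:ulax2} and \myeqref{eq:llax2} to conclude $|\lambda^\alpha(V(\xb\pm))-\xb|\leq\delta_L J(V;\xb)$; as written this is a slip, since those inequalities give \emph{lower} bounds $|\lambda^\alpha(V(\xb\pm))-\xb|\geq\delta_L J(V;\xb)$, not upper bounds. Your route---using $\xb=\hat\lambda^\alpha\big(V(\xb+),V(\xb-)\big)$ from \myeqref{eq:jump-spec} together with smoothness of $\hat\lambda^\alpha$ on the compact set $\Peps\times\Peps$---is a correct and self-contained way to get the needed upper bound $|\lambda^\alpha(V(\xb\pm))-\xb|\leq L\,J(V;\xb)$. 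Your care in noting that Theorem~\myref{th:shock-nbh} guarantees genuine one-sided limits (so that \myeqref{eq:jump-spec} applies to the actual pair $V(\xb+),V(\xb-)$ and $J(V;\xb)=|V(\xb+)-V(\xb-)|$) is exactly right. The resulting constant $C_\Sx=\max(\delta_L^{-1},L\delta_L^{-1})$ differs from the paper's $\max(1,\delta_L^{-1})$, but only existence of some $V$-independent $C_\Sx$ is asserted.
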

\begin{proof}
    (See Figure \myref{fig:FDseq}.)
    \begin{alignat}{1}&
        |\xi-\xb| \geq \min\big\{|\sigma^-(\xb)-\xb|,|\sigma^+(\xb)-\xb|\big\} 
        \overset{\text{\myeqref{eq:shocknbhL}}}{\underset{\text{\myeqref{eq:shocknbhR}}}{\geq}}
        \delta_LJ(V;\xb)
        \notag\\&
        \Rightarrow\quad
        J(V;\xb) \leq \delta_L^{-1}|\xi-\xb|
        \notag\\&
        \overset{\text{\myeqref{eq:ulax2},\myeqref{eq:llax2}}}{\Rightarrow}\quad
        |\lambda^\alpha(V(\xb+))-\xb|,|\lambda^\alpha(V(\xb-))-\xb| \leq \delta_LJ(V;\xb) \leq |\xi-\xb|.
    \notag\end{alignat}
    Take $C_\Sx\geq \max(1,\delta_L^{-1})$.
\end{proof}

\begin{lemma}
    \mylabel{lem:limitpoint}%
    If $\xi$ is a limit point of $\Sx$, then $\xi\in \Rx$.
\end{lemma}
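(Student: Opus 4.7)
To show $\xi\in\Rx$ I must establish $J(V;\xi)=0$ and $\xi=\lambda^\alpha(V(\xi))$. Pick $\xi_n\in\Sx$ with $\xi_n\neq\xi$ and $\xi_n\to\xi$; after extraction, assume WLOG that $\xi_n<\xi$ for all $n$ (the case $\xi_n>\xi$ is handled symmetrically via $\sigma^-$ and $V(\xi_n-)$). First I rule out $\xi\in\Sx$: if it were, Theorem \myref{th:shock-nbh} would furnish an open neighbourhood $(\sigma^-(\xi),\sigma^+(\xi))$ on which $V$ is piecewise constant, precluding any further element of $\Sx$; but $\xi_n\to\xi$ would eventually place $\xi_n\neq\xi$ in this neighbourhood, a contradiction. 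Hence $J(V;\xi)=0$, which combined with the Lipschitz identity \myeqref{eq:fintfull} and an interleaving argument (two subsequential limits along $\eta_m\to\xi$ can be combined into a $\xi_k^-<\xi_k^+$ pair forcing agreement) implies that $V$ is continuous at $\xi$ in the usual sense, with $V(\xi)$ the common limit.

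The heart of the argument is the geometric claim that $\sigma^+(\xi_n)\leq\xi$ for all $n$ large. If not, a subsequence has $\xi\in(\xi_n,\sigma^+(\xi_n))$, placing $\xi$ inside an open interval on which $V$ is constant (Theorem \myref{th:shock-nbh}). But then every sufficiently later $\xi_m$ also falls into this constant-value interval, contradicting $\xi_m\in\Sx$ (which requires $J>0$). Once this is in hand, Lemma \myref{lem:Fshock} applied at $\bar\xi=\xi_n$ with exterior point $\xi$ gives
$$\big|\lambda^\alpha(V(\xi_n+))-\xi_n\big|\leq C_\Sx|\xi-\xi_n|\longrightarrow 0.$$

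To convert this estimate into information about $\lambda^\alpha(V(\xi))$, pick any $\eta_n\in(\xi_n,\sigma^+(\xi_n))$; the bounds $\xi_n<\eta_n<\sigma^+(\xi_n)\leq\xi$ together with $\xi_n\to\xi$ squeeze $\eta_n\to\xi$, while $V\equiv V(\xi_n+)$ on this interval by Theorem \myref{th:shock-nbh}. Continuity of $V$ at $\xi$ then yields $V(\xi_n+)=V(\eta_n)\to V(\xi)$, so $\lambda^\alpha(V(\xi_n+))\to\lambda^\alpha(V(\xi))$; combined with $\xi_n\to\xi$ and the displayed estimate this forces $\lambda^\alpha(V(\xi))=\xi$, so $\xi\in\Rx$. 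The principal obstacle is the geometric step of ruling out perpetual enclosure of $\xi$ by the shock neighbourhoods $(\sigma^-(\xi_n),\sigma^+(\xi_n))$; a secondary technicality is confirming that the version of $V$ fixed in Section \myref{section:vers}, together with the right-limit modification at $\Sx$-points, does deliver the true continuous value at $J(V;\xi)=0$ points.
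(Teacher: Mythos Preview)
Your proof is correct and follows essentially the same line as the paper's: rule out $\xi\in\Sx$ using the constant neighbourhoods from Theorem~\ref{th:shock-nbh}, then use Lemma~\ref{lem:Fshock} to bound $|\lambda^\alpha(V(\xi_n\pm))-\xi_n|$ by $C_\Sx|\xi-\xi_n|$, and pass to the limit along auxiliary points $\eta_n$ in the constant neighbourhoods. The only cosmetic difference is that you take $\xi_n\uparrow\xi$ and work with $\sigma^+,V(\xi_n+)$, whereas the paper takes $\xi_n\downarrow\xi$ and works with $\sigma^-,V(\xi_n-)$; these are mirror images, and your extra care about continuity of the chosen version at $\xi$ is justified but not strictly needed beyond what the paper already assumes.
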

\begin{proof}
    (See Figure \myref{fig:FDseq}.)
    Let $(\xi_n)\rightarrow\xi$ be a strictly decreasing sequence in $\Sx$ (the strictly increasing case is analogous).
    $\Sx$ is discrete, so $\xi\not\in\boi{\sigma^-(\xi_n)}{\sigma^+(\xi_n)}$ (it could not be a limit point otherwise).
    Choose some $\eta_n\in\boi{\sigma^-(\xi_n)}{\xi_n}$ for each $n$. Then
    \begin{alignat}{1}
        |\lambda^\alpha(V(\eta_n))-\eta_n| 
        &=
        |\lambda^\alpha(V(\xi_n-))-\eta_n|
        \\
        &\leq 
        |\lambda^\alpha(V(\xi_n-))-\xi_n| + |\xi_n-\eta_n|
        \notag\\&\topref{eq:lamlip}{\leq}
        C_\Sx|\xi_n-\xi| + |\xi_n-\xi|
        \overset{n\rightarrow\infty}{\rightarrow} 0.
    \notag\end{alignat}
    $(\eta_n)\rightarrow\xi$ and $\xi\not\in \Sx$, so $\lambda^\alpha\circ V$ is continuous at $\xi$ and therefore $\lambda^\alpha(V(\xi))=\xi$.
\end{proof}

\begin{theorem}
    \mylabel{th:constset}
    If $\xb\in \Cx$, then $V$ is constant on an interval
    $\boi{\kappa^-(\xb)}{\kappa^+(\xb)}$ that contains $\xb$.
    We take the interval maximal in $I^\alpha$.
    $\kappa^\pm(\xb)$ are either in $\Rx\cup \Sx$ or endpoints of $I^\alpha$.
\end{theorem}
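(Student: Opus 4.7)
The plan is to construct a maximal constancy interval around $\xb$ by iterating a local constancy statement that combines Theorem \myref{th:Uconst} with the structural results for $\Sx$ already established.

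First I would show that $\xb$ has an open neighbourhood in $I^\alpha$ on which $V$ is constant. Because $\xb\in\Cx$, we have $J(V;\xb)=0$, so by Lemma \myref{lem:limitpoint} (contrapositive, using $\xb\notin\Rx$) the point $\xb$ is not a limit point of $\Sx$. Hence there is an open neighbourhood $N_1\subset I^\alpha$ of $\xb$ with $N_1\cap\Sx=\emptyset$, which means $J(V;\eta)=0$ for every $\eta\in N_1$; since $V$ is bounded, this pointwise non-jump condition implies $V$ is continuous on $N_1$ in the usual sense. Consequently $\lambda^\alpha\circ V$ is continuous on $N_1$, and since $\lambda^\alpha(V(\xb))\neq\xb$, there is a possibly smaller neighbourhood $N_2\subset N_1$ on which $\lambda^\alpha(V(\eta))\neq\eta$. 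For $\beta\neq\alpha$, strict hyperbolicity and smallness of $\ueps$ together with $\eta\in I^\alpha$ imply $\lambda^\beta(V(\eta))$ stays uniformly away from $\lambda^\alpha(\Vb)$ and hence from $\eta$; so no eigenvalue of $A(V(\eta))$ coincides with $\eta$ on $N_2$. Theorem \myref{th:Uconst} then gives that $V$ is constant on $N_2$.

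Next I would define
\[
    \kappa^+(\xb):=\sup\{b\in I^\alpha~|~b>\xb,~V\text{ is constant on }\cli{\xb}{b}\},
\]
and $\kappa^-(\xb)$ analogously. By the previous paragraph these are well-defined with $\kappa^-(\xb)<\xb<\kappa^+(\xb)$. Taking suprema, $V$ is constant on the open interval $\boi{\kappa^-(\xb)}{\kappa^+(\xb)}$ with some common value $V_0$, and this interval is maximal by construction.

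Finally I would show that each endpoint $\kappa^+(\xb)$ (and symmetrically $\kappa^-(\xb)$) must lie in $\Rx\cup\Sx$ or be an endpoint of $I^\alpha$. Suppose not; then $\kappa^+(\xb)\in I^\alpha\cap\Cx$. Applying the first paragraph to $\kappa^+(\xb)$ in place of $\xb$ yields an open neighbourhood $N'$ of $\kappa^+(\xb)$ on which $V$ is constant, with value $V(\kappa^+(\xb))$. Since $\kappa^+(\xb)\in\Cx$ implies $J(V;\kappa^+(\xb))=0$, continuity from the left forces $V(\kappa^+(\xb))=V_0$, so $V\equiv V_0$ on $\boi{\kappa^-(\xb)}{\kappa^+(\xb)}\cup N'$, contradicting the maximality of $\kappa^+(\xb)$. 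The main technical step — and the only place where smallness and strict hyperbolicity really enter — is verifying in paragraph one that the $\beta\neq\alpha$ components of the spectrum stay away from $\xi$ on a neighbourhood of $\xb$, so that Theorem \myref{th:Uconst} applies cleanly; everything else is a bootstrap.
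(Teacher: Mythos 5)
Your proof is correct and follows essentially the same path as the paper's: invoke Lemma~\myref{lem:limitpoint} to conclude $\xb$ is not a limit point of $\Sx$ and hence $V$ is continuous near $\xb$, verify that $\xi\mapsto\lambda^\gamma(V(\xi))-\xi$ is nonzero there for all $\gamma$ (for $\gamma=\alpha$ by continuity and $\xb\in\Cx$, for $\gamma\neq\alpha$ by the construction of $I^\alpha$), apply Theorem~\myref{th:Uconst} to get local constancy, take the maximal interval, and note the endpoints cannot be in $\Cx$ by maximality. The only cosmetic difference is that you spell out the $\beta\neq\alpha$ spectral-separation argument that the paper compresses into ``by definition of $I^\alpha$.''
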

\begin{proof}
    By Lemma \myref{lem:limitpoint}, $\xb$ is not a limit point of $\Sx$ (since it would be in $\Rx$ otherwise,
    and $\Rx\cap \Cx=\emptyset$. Hence $V$ is continuous in a neighbourhood of $\xb$.
    Then $\lambda^\alpha(V(\xb))-\xb\neq 0$ implies $\lambda^\alpha(V(\xi))-\xi\neq 0$ for $\xi$ in a neighbourhood of $\xb$. 
    Since $\lambda^\beta(V(\xi))-\xi\neq 0$ for $\beta\neq\alpha$ by definition of $I^\alpha$,
    Theorem \myref{th:Uconst} shows $V$ is constant on this neighbourhood.
    We may take $\boi{\kappa^-(\xb)}{\kappa^+(\xb)}$ as described in the statement.

    By what we have already shown, $\kappa^\pm(\xb)\not\in \Cx$ because it would violate their extremality.
\end{proof}

\begin{figure}[h]
        \centerline{\input{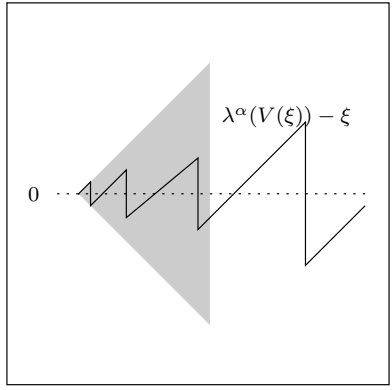}}
        \caption{$\xi\mapsto\lambda(V(\xi))-\xi$ is Lipschitz at points $\xb$ where it is $0$, since shocks have 
            to weaken at least proportionally to their distance from $\xb$.}
        \mylabel{fig:FDseq}
\end{figure}

From now until just before Theorem \myref{th:VSL}, we consider only $V$ restricted to $I^\alpha$, without writing $V_{|I^\alpha}$ to avoid clutter.

\begin{lemma}
    \mylabel{lem:lamlip}
    If $\xb\in \Rx$, then $\lambda^\alpha\circ V_{|\complement \Sx}$ is Lipschitz at $\xb$ with Lipschitz constant $\leq C_\Sx+2$.
\end{lemma}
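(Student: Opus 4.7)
The plan is to bound $|\lambda^\alpha(V(\xi))-\xb|$ for $\xi\in\complement\Sx$ close to $\xb$, exploiting the resonance identity $\lambda^\alpha(V(\xb))=\xb$ coming from $\xb\in\Rx$, and splitting cases according to whether $\xi$ itself lies in $\Rx$ or in $\Cx$. WLOG $\xi>\xb$ (the case $\xi<\xb$ is symmetric with $\kappa^+$ replacing $\kappa^-$), and I take $\xi$ so close to $\xb$ that the endpoints of $I^\alpha$ do not intervene, which is legitimate since the assertion is local at an interior resonance point.

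I would dispose of the easy cases first. For $\xi\in\Rx$, the identity gives $|\lambda^\alpha(V(\xi))-\xb|=|\xi-\xb|$ immediately. For $\xi\in\Cx$, Theorem \myref{th:constset} furnishes the maximal constant interval $\boi{\kappa^-(\xi)}{\kappa^+(\xi)}\ni\xi$ with endpoints in $\Rx\cup\Sx$. If $\kappa^-(\xi)\leq\xb$, then $\xb$ lies in the closure of this constant interval, so $V(\xb)=V(\xi)$ and the difference vanishes. If $\kappa^-(\xi)>\xb$ with $\kappa^-(\xi)\in\Rx$, continuity of $V$ at $\kappa^-(\xi)$ combined with constancy of $V$ on $\boi{\kappa^-(\xi)}{\kappa^+(\xi)}$ yields $\lambda^\alpha(V(\xi))=\kappa^-(\xi)$, so $|\lambda^\alpha(V(\xi))-\xb|=\kappa^-(\xi)-\xb\leq\xi-\xb$.

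The substantive case is $\xi\in\Cx$ with $\xi_l:=\kappa^-(\xi)\in\Sx$. After the right-limit modification made just before Lemma \myref{lem:Fshock}, $V(\xi)=V(\xi_l+)$, so I must estimate $|\lambda^\alpha(V(\xi_l+))-\xb|$. The natural tool is Lemma \myref{lem:Fshock} applied at the shock $\xi_l$ with $\xb$ as the external point, which demands $\xb\notin\boi{\sigma^-(\xi_l)}{\sigma^+(\xi_l)}$. Since $\xb<\xi_l<\sigma^+(\xi_l)$, only the left side needs attention, and I will rule out $\xb\in\boi{\sigma^-(\xi_l)}{\xi_l}$ by contradiction: $V$ is constant on $\roi{\sigma^-(\xi_l)}{\xi_l}$ with value $V(\xi_l-)$ by Theorem \myref{th:shock-nbh}, so $V(\xb)=V(\xi_l-)$ would force $\lambda^\alpha(V(\xi_l-))=\xb$; but $\sigma^-(\xi_l)\in\Rx$ (by the interior assumption on $\xb$ and closeness of $\xi$), so continuity of $V$ there yields $\lambda^\alpha(V(\xi_l-))=\lambda^\alpha(V(\sigma^-(\xi_l)))=\sigma^-(\xi_l)$, forcing $\sigma^-(\xi_l)=\xb$ and contradicting the strict inequality $\xb>\sigma^-(\xi_l)$.

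With $\xb\leq\sigma^-(\xi_l)$ in hand, Lemma \myref{lem:Fshock} yields $|\lambda^\alpha(V(\xi_l+))-\xi_l|\leq C_\Sx|\xi_l-\xb|$; the triangle inequality with $\xb<\xi_l<\xi$ then gives
$$|\lambda^\alpha(V(\xi))-\xb|\leq|\lambda^\alpha(V(\xi_l+))-\xi_l|+|\xi_l-\xb|\leq(C_\Sx+1)|\xi-\xb|,$$
comfortably within the claimed $C_\Sx+2$. The hard part is the contradiction ruling out $\xb$ inside the left shock neighborhood of $\xi_l$; this is where the $\Rx$-hypothesis on $\xb$ really bites, coupling the resonance identity for $\xb$ to continuity of $V$ at the other resonance endpoint $\sigma^-(\xi_l)$ to force the two points to coincide. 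Everything else is careful bookkeeping over the $\Sx$-$\Rx$-$\Cx$ partition.
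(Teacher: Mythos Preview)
Your approach is essentially the paper's: the same partition into $\xi\in\Rx$ versus $\xi\in\Cx$, the same reduction via $\kappa^-(\xi)$ to a resonance or shock endpoint, and the same appeal to Lemma~\ref{lem:Fshock} in the shock case. The paper bounds $|\lambda^\alpha(V(\xi))-\xi|$ first and then passes to $|\lambda^\alpha(V(\xi))-\lambda^\alpha(V(\xb))|$ by adding $1$ to the constant, whereas you bound the latter directly; this is cosmetic. Your extra sub-case $\kappa^-(\xi)\leq\xb$ is harmless and indeed handles a configuration the paper glosses over.

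There is, however, a genuine soft spot in your verification of the hypothesis of Lemma~\ref{lem:Fshock}. With $\xi_l:=\kappa^-(\xi)\in\Sx$ you need $\xb\notin\boi{\sigma^-(\xi_l)}{\sigma^+(\xi_l)}$, and your contradiction argument hinges on $\sigma^-(\xi_l)\in\Rx$, which you justify by ``the interior assumption on $\xb$ and closeness of $\xi$''. But Theorem~\ref{th:shock-nbh} only gives $\sigma^-(\xi_l)\in\Rx\cup\partial I^\alpha$, and no smallness of $|\xi-\xb|$ prevents $\sigma^-(\xi_l)$ from being the left endpoint of $I^\alpha$: even if $\xi_l$ is arbitrarily close to $\xb$, the constant left neighbourhood of the shock at $\xi_l$ may extend all the way to $\partial I^\alpha$. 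The paper's argument at this spot is both simpler and airtight: by \eqref{eq:lax-upper} and \eqref{eq:lax-lower} in Theorem~\ref{th:shock-nbh}, every $\eta\in\boi{\sigma^-(\xi_l)}{\sigma^+(\xi_l)}\setminus\{\xi_l\}$ satisfies $\lambda^\alpha(V(\eta))\neq\eta$ while $V$ is continuous there, so this punctured interval lies in $\Cx$; since $\xb\in\Rx$ and $\Rx\cap\Cx=\emptyset$, the exclusion is immediate. Replace your contradiction paragraph with that one-line observation and the proof is complete.
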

\begin{proof}
    (See Figure \myref{fig:FDseq}.)
    Consider $\xi\not\in \Sx$ with $\xi>\xb$ (the case $\xi<\xb$ is analogous).

    We first obtain a Lipschitz estimate for $\lambda^\alpha(V(\xi))-\xi$.

    1. If $\xi\in \Rx$, then by definition of $\Rx$
    \begin{alignat}{1}&
        \big| \lambda^\alpha(V(\xi))-\xi \big| = 0 .
        \myeqlabel{eq:lip1}
    \end{alignat}
    
    2. If $\xi\in \Cx$, then $\kappa^-(\xi)\in \Rx\cup \Sx$ by Theorem \myref{th:constset}.
    ($\kappa^-(\xi)$ cannot be a boundary point of $I^\alpha$ since $\xb\leq\kappa^-(\xi)<\xi$.)

    2a. If $\kappa^-(\xi)\in \Rx$, then $\lambda^\alpha(V(\kappa^-(\xi)))=\kappa^-(\xi)$, so
    \begin{alignat}{1}&
        \lambda^\alpha(V(\xi)) = \lambda^\alpha\big(V(\kappa^-(\xi))\big) = \kappa^-(\xi) \in \cli{\xb}{\xi}
        \notag\\\Rightarrow&
        \big|\lambda^\alpha(V(\xi)) - \xi \big| 
        =
        \big| \kappa^-(\xi) - \xi \big| 
        \leq
        |\xb-\xi|.
        \myeqlabel{eq:lip2}
    \end{alignat}
    
    2b. If $\kappa^-(\xi)\in \Sx$, then       
    (note $\xb\not\in\boi{\sigma^-(\kappa^-(\xi))}{\sigma^+(\kappa^-(\xi))}\subset\Cx$ since $\xb\in\Rx$, $\Rx\cap\Cx=\emptyset$,
    so Lemma \myref{lem:Fshock} applies to $\xb$)
    \begin{alignat}{1}&
        \big|\lambda^\alpha(V(\xi))-\xi\big| 
        = \big|\lambda^\alpha\big(V(\kappa^-(\xi)+)\big)-\xi\big| 
        \notag\\&\leq \big|\lambda^\alpha\big(V(\kappa^-(\xi)+)\big)-\kappa^-(\xi)\big| + |\kappa^-(\xi)-\xi|
        \notag\\&\overset{\text{Lemma \myref{lem:Fshock}}}\leq C_\Sx|\kappa^-(\xi)-\xb| + |\kappa^-(\xi)-\xi| 
        \leq (C_\Sx+1)|\xb-\xi|. \myeqlabel{eq:lip3}
    \end{alignat}
    
    Combining all cases \myeqref{eq:lip1}, \myeqref{eq:lip2}, \myeqref{eq:lip3} we see that 
    \begin{alignat}{1}
        \big| \lambda^\alpha(V(\xi))-\xi - \subeq{\big(\lambda^\alpha(V(\xb))-\xb\big)}{=0} \big| 
        = |\lambda^\alpha(V(\xi))-\xi| 
        &\leq (C_\Sx+1) |\xb-\xi|.
    \notag\end{alignat}
Hence $\lambda^\alpha\circ V_{|\complement \Sx}$ itself is also Lipschitz at $\xb$, with constant $\leq C_\Sx+2$.
\end{proof}

\begin{theorem}
    \mylabel{th:lipF}%
    $V$ is Lipschitz with constant $\leq C_\Rx$ (independent of $V$) at any $\xb\in \Rx$.
\end{theorem}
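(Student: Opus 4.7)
The plan is to use the integrated identity \myeqref{eq:Adiff} to control $V(\xi) - V(\xb)$ through its projections onto the left eigenvectors of $\hat A(V(\xi), V(\xb)) - \xb I$. For the $m-1$ non-$\alpha$ directions, strict hyperbolicity makes this matrix uniformly invertible and yields a Lipschitz bound directly. The $\alpha$-direction is the near-kernel direction (since $\xb = \lambda^\alpha(V(\xb))$), so I would recover its coefficient using the Lipschitz control of $\lambda^\alpha \circ V$ already established in Lemma \myref{lem:lamlip}, together with genuine nonlinearity \myeqref{eq:gennon}. Once this bound is proved on $I^\alpha \setminus \Sx$, Theorem \myref{th:shock-nbh} and Lemma \myref{lem:Fshock} would let me extend it across the discrete set $\Sx$.

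For $\xi \in I^\alpha \setminus \Sx$ with $\xi \neq \xb$, I would apply \myeqref{eq:Adiff} with $(\xi_1,\xi_2) = (\xb,\xi)$; the right-hand side has norm at most $2\ueps|\xi-\xb|$. For each $\beta \neq \alpha$, multiplying on the left by $\hat l^\beta(V(\xi), V(\xb))$ and invoking \myeqref{eq:stricthyperhat} to bound $|\hat\lambda^\beta - \xb|$ uniformly away from zero yields
\[ \big|\hat l^\beta(V(\xi), V(\xb))\big(V(\xi)-V(\xb)\big)\big| \leq C_1|\xi-\xb|. \]
For the remaining $\alpha$-coefficient, I would write $\lambda^\alpha(V(\xi)) - \lambda^\alpha(V(\xb)) = \tilde L^\alpha(V(\xi)-V(\xb))$ with $\tilde L^\alpha$ the averaged Jacobian along the segment. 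By \myeqref{eq:gennon}, continuity, and smallness of $\ueps$, one has $\tilde L^\alpha \hat r^\alpha(V(\xi),V(\xb)) \geq c > 0$. Lemma \myref{lem:lamlip} bounds $|\lambda^\alpha(V(\xi)) - \xb| \leq (C_\Sx+2)|\xi-\xb|$, and subtracting the already-controlled contributions of the $\beta\neq\alpha$ coefficients isolates $|c_\alpha| = O(|\xi-\xb|)$, whence $|V(\xi)-V(\xb)| \leq C_2 |\xi-\xb|$.

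To extend across a shock point $\xi \in \Sx$, I observe that $\xb \in \Rx$ and $\Rx \cap \Cx = \emptyset$ force $\xb \notin \boi{\sigma^-(\xi)}{\sigma^+(\xi)}$, since by Theorem \myref{th:shock-nbh} that open interval minus $\xi$ consists of $\Cx$-points. Lemma \myref{lem:Fshock} then gives $J(V;\xi) \leq C_\Sx|\xi-\xb|$. Assuming without loss of generality $\xi > \xb$, so $\xb \leq \sigma^-(\xi) < \xi$, I pick any $\xi' \in \boi{\sigma^-(\xi)}{\xi}$. Then $\xi' \in \Cx \subset \complement \Sx$, $\xb < \xi' < \xi$, and the right-continuous modification at shocks gives $V(\xi') = V(\xi-)$ and $V(\xi) = V(\xi+)$. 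Therefore
\[ |V(\xi)-V(\xb)| \leq J(V;\xi) + |V(\xi')-V(\xb)| \leq (C_\Sx + C_2)|\xi-\xb|, \]
and setting $C_\Rx := C_\Sx + C_2$ completes the proof.

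The main obstacle is the recovery of the $\alpha$-component in the second step: the matrix $\hat A(V(\xi),V(\xb)) - \xb I$ degenerates precisely in the $\hat r^\alpha$ direction for $\xi$ near $\xb$, so \myeqref{eq:Adiff} by itself cannot bound that component. The remedy is to feed in the separately established Lipschitz control of $\lambda^\alpha \circ V$ from Lemma \myref{lem:lamlip} and convert it back into a vector bound via genuine nonlinearity. This is precisely where \myeqref{eq:gennon} is indispensable; in the linearly degenerate case the mechanism fails and an entirely different argument (Theorem \myref{th:lindeg}) is required.
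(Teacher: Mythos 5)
The proposal is correct and follows essentially the paper's own argument: \eqref{eq:Adiff} tested against $\hat\ell^\beta$ for $\beta\neq\alpha$ controls the non-resonant components, and Lemma \ref{lem:lamlip} plus genuine nonlinearity \eqref{eq:gennon} recovers the $\alpha$-component (the paper packages this combination as the local diffeomorphism $g=(g^1,\dots,g^m)$, whereas you argue directly in the eigenbasis, but these are the same mechanism). You also make explicit the extension of the bound across $\Sx$ via Lemma \ref{lem:Fshock}, which the paper leaves implicit since its proof literally only establishes Lipschitz continuity for the restriction $V_{|\complement\Sx}$.
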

\begin{proof}
    For each $\beta\neq\alpha$ multiply $\hat\ell_\beta$ 
    onto \myeqref{eq:Adiff} to obtain, for some $M$ depending only on $\Peps$.
    \begin{alignat}{1}
        M|\xi-\xb|
        &\geq
        \Big|\hat\ell^\beta\big(V(\xi),V(\xb)\big) \Big(\hat A\big(V(\xi),V(\xb)\big) - \xb I\Big) \big(V(\xi)-V(\xb)\big)\Big|
        \notag\\&=
        \Big|\Big( \hat\lambda^\beta\big(V(\xi),V(\xb)\big) - \lambda^\alpha\big(V(\xb)\big) \Big) 
        \hat\ell^\beta\big(V(\xi),V(\xb)\big) 
        \big(V(\xi)-V(\xb)\big)\Big|
    \notag\end{alignat}
    Since $\hat\lambda^\beta-\lambda^\alpha$ is bounded away from $0$ by \myeqref{eq:stricthyperhat}, we obtain
    for some other constant $M'$ that
    \begin{alignat}{1}
        M'|\xi-\xb|
        &\geq
        \Big|\hat\ell^\beta\big(V(\xi),V(\xb)\big) 
        \big(V(\xi)-V(\xb)\big)\Big|
    \notag\end{alignat}
    so that
    $$ \xi\mapsto\hat\ell^\beta\big(V(\xi),V(\xb)\big)\big(V(\xi)-V(\xb)\big) $$
    (which is $=0$ at $\xi=\xb$) is Lipschitz at $\xb$ with constant $\leq M$.

    $$ W \mapsto g^\beta(W) := \hat\ell^\beta\big(W,V(\xb)\big)\big(W-V(\xb)\big) \qquad (\beta\neq\alpha)$$
    and 
    $$ W \mapsto g^\alpha(W) := \lambda^\alpha(W) $$
    yield a local diffeomorphism $\Peps\ni W\mapsto g(W):=(g^1(W),...,g^m(W))$
    (after reducing $\ueps>0$, if necessary): for $\beta\neq\alpha$,
    $$ 0 = g^\beta_W(V(\xb)) z = \ell^\beta(V(\xb)) z $$
    implies $z\parallel r^\alpha(W)$, and then 
    \begin{alignat}{1}&
        0 = g_W^\alpha(V(\xb)) z =\lambda^\alpha_W(V(\xb)) z \quad\topref{eq:gennon}{\Rightarrow} \quad z = 0
        \myeqlabel{eq:fW}
    \end{alignat}
    by genuine nonlinearity; hence $g_W(V(\xb))$ is regular.
    
    Lipschitz continuity at $\xb$ for $\complement \Sx\ni\xi\mapsto g(V(\xi))$ 
    (from Lemma \myref{lem:lamlip} for $g^\alpha$) 
    implies Lipschitz continuity at $\xb$ for $\complement \Sx\ni\xi\mapsto V(\xi)$.
\end{proof}

\begin{lemma}
    \mylabel{lem:VS}
    Define
    \begin{alignat}{1}
        V_S(\xi) &:= \sum_{\eta\in \Sx,~\eta<\xi}\big(V(\eta+)-V(\eta-)\big). \myeqlabel{eq:VSdef}
    \end{alignat}
    Then $V_S$ is well-defined and a right-continuous saltus function (see Definition \myref{def:saltus} in the Appendix). 
\end{lemma}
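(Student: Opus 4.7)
The plan is to establish three things in order: (i) absolute convergence of the sum defining $V_S$; (ii) the one-sided continuity claim via tail estimates on the absolutely convergent sum; (iii) identification with a saltus function in the sense of Definition \myref{def:saltus}.

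For (i), the key tool is Theorem \myref{th:shock-nbh}. Each $\eta\in\Sx$ sits in an open interval $\boi{\sigma^-(\eta)}{\sigma^+(\eta)}$ on whose two halves $V$ is constant, and whose total length is at least $2\delta_L J(V;\eta)$. These intervals for distinct shocks are pairwise disjoint: if $\eta_1,\eta_2\in\Sx$ with $\eta_1\in\boi{\sigma^-(\eta_2)}{\sigma^+(\eta_2)}$, then $V$ would be constant in a neighborhood of $\eta_1$, contradicting $\eta_1\in\Sx$; and if the intervals only overlapped without containing each other's shock point, one boundary endpoint would lie inside the other interval while belonging to $\Rx\cup\partial I^\alpha$, but points in the interior of the other's constant neighborhood are neither in $\Rx$ (since there $\xi=\lambda^\alpha(V(\xi))$ fails by the strict Lax inequalities \myeqref{eq:ulax2}, \myeqref{eq:llax2}) nor in $\partial I^\alpha$. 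Adding the lengths yields
\[ \sum_{\eta\in\Sx} 2\delta_L J(V;\eta) \leq |I^\alpha|, \]
so $\sum_{\eta\in\Sx} J(V;\eta) <\infty$. Since Theorem \myref{th:shock-nbh} guarantees well-defined left and right limits $V(\eta\pm)$ with $|V(\eta+)-V(\eta-)|=J(V;\eta)$ at every $\eta\in\Sx$, the series in \myeqref{eq:VSdef} converges absolutely in $\R^m$ and $V_S$ is well-defined.

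For (ii), fix $\xi_0\in\R$. For $\xi$ close to $\xi_0$,
\[ V_S(\xi)-V_S(\xi_0) = \sum_{\eta\in\Sx,\,\xi_0\leq\eta<\xi}\big(V(\eta+)-V(\eta-)\big) \qquad(\xi>\xi_0), \]
and the analogous expression (with signs reversed) for $\xi<\xi_0$. By absolute summability from (i), the tail
\[ \sum_{\eta\in\Sx\cap\roi{\xi_0}{\xi_0+\delta}} J(V;\eta) \longrightarrow 0 \qquad\text{as }\delta\downarrow 0, \]
and similarly for the left. Combined with the convention in Definition \myref{def:saltus} (which dictates which side carries the jump at $\xi_0$ itself, consistent with the strict inequality $\eta<\xi$ in \myeqref{eq:VSdef}), this gives the asserted one-sided continuity at every $\xi_0$.

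Finally, (iii) is essentially a matter of unpacking definitions: $\Sx$ is at most countable (Remark \myref{rem:Ddiscrete}), $V_S$ is constant on each maximal open interval of $\complement \Sx$, the jump of $V_S$ at each $\eta\in\Sx$ equals $V(\eta+)-V(\eta-)$ by construction, and the one-sided continuity matches the saltus convention. Hence $V_S$ fits Definition \myref{def:saltus} verbatim. The main technical obstacle is the disjointness of shock neighborhoods used in (i); everything else is summability bookkeeping.
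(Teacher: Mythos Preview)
Your proof is correct and follows essentially the same approach as the paper: bound $\sum_{\eta\in\Sx}J(V;\eta)$ by $(2\delta_L)^{-1}|I^\alpha|$ using the pairwise disjoint shock neighborhoods from Theorem \myref{th:shock-nbh}, then read off the saltus-function form with only the $b_n$ terms present. You supply more detail than the paper on why the neighborhoods $\boi{\sigma^-(\eta)}{\sigma^+(\eta)}$ are disjoint and on the right-continuity, but the underlying argument is the same.
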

\begin{proof}
    \begin{alignat}{1}&
        \sum_{\eta\in \Sx}|V(\eta+)-V(\eta-)|
        =
        \sum_{\eta\in \Sx}J(V;\eta)
        \notag\\&
        \overset{\text{\myeqref{eq:shocknbhL}}}{\underset{\text{\myeqref{eq:shocknbhR}}}{\leq}}
        (2\delta_L)^{-1}\sum_{\eta\in \Sx}|\sigma^+(\eta)-\sigma^-(\eta)|
        \leq
        (2\delta_L)^{-1}|I^\alpha| < \infty
        \notag\end{alignat}
    since the neighbourhoods $\boi{\sigma^-(\eta)}{\sigma^+(\eta)}$ of $\eta\in \Sx$ are pairwise disjoint
    and contained in $I^\alpha$.
    Hence not only is $\Sx$ countable, but the jumps sum to a finite number. Hence \myeqref{eq:VSdef} makes sense.
    In Definition \myref{def:saltus} only $b_n$ are used so that $V_S$ is right-continuous.
\end{proof}

\begin{lemma}
    \mylabel{lem:VSlipF}
    $V_S$ is Lipschitz with constant $\leq C_S$ ($C_S$ independent of $V$) at any $\xb\in \Rx$.
\end{lemma}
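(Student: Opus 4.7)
The plan is to bound the jump-sum defining $V_S$ via the triangle inequality and then exploit the disjoint constant neighbourhoods from Theorem \myref{th:shock-nbh} through a packing argument. Assume $\xi > \xb$ (the case $\xi < \xb$ is symmetric). Since $\xb \in \Rx$ and $\Rx \cap \Sx = \emptyset$,
\[ |V_S(\xi) - V_S(\xb)| \leq \sum_{\eta \in \Sx,~\xb < \eta < \xi} J(V;\eta), \]
so it suffices to bound this sum linearly in $\xi - \xb$.

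The key observation, which I expect to be the only delicate point, is that $\xb$ lies in \emph{no} open shock neighbourhood $\boi{\sigma^-(\eta)}{\sigma^+(\eta)}$: by Theorem \myref{th:shock-nbh} these neighbourhoods are constant-value regions, hence subsets of $\Cx$, which is disjoint from $\Rx$. Two consequences follow. First, Lemma \myref{lem:Fshock} applies with $\xb$ playing the role of the exterior point, yielding $J(V;\eta) \leq C_\Sx|\eta - \xb|$ for every $\eta \in \Sx$. Second, for any $\eta \in \Sx$ with $\eta > \xb$, the open interval $\boi{\sigma^-(\eta)}{\sigma^+(\eta)}$ cannot cross $\xb$, forcing $\sigma^-(\eta) \geq \xb$.

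Finally I would partition the shocks $\eta \in \Sx \cap \boi{\xb}{\xi}$ into the at most one ``bad'' shock whose neighbourhood contains $\xi$ and all remaining ``good'' shocks. The bad shock (if any) contributes a single jump of magnitude at most $C_\Sx|\eta - \xb| \leq C_\Sx(\xi - \xb)$ by the previous bound. For each good shock, $\xi > \eta$ together with $\xi \notin \boi{\sigma^-(\eta)}{\sigma^+(\eta)}$ forces $\sigma^+(\eta) \leq \xi$, so the entire open neighbourhood lies in $\cli{\xb}{\xi}$. Pairwise disjointness combined with the length bound $\sigma^+(\eta) - \sigma^-(\eta) \geq 2\delta_L J(V;\eta)$ from Theorem \myref{th:shock-nbh} gives $\sum_{\text{good}} J(V;\eta) \leq (2\delta_L)^{-1}(\xi - \xb)$. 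Adding the two contributions, the constant $C_S := C_\Sx + (2\delta_L)^{-1}$ works and is manifestly independent of $V$.
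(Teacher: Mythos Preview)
Your proof is correct and follows essentially the same approach as the paper: the packing argument via disjoint shock neighbourhoods handles the ``good'' shocks, and Lemma~\myref{lem:Fshock} (applicable because $\xb\in\Rx$ lies outside every shock neighbourhood) handles the single ``bad'' shock, yielding the identical constant $C_S=C_\Sx+(2\delta_L)^{-1}$. The only cosmetic difference is organisational: the paper does a case split on whether $\xi$ itself lies in some shock neighbourhood and then reduces to the complementary case via $\sigma^-(\eta)$, whereas you partition the shocks directly into good and bad; the content is the same.
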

\begin{proof}
    Let $\xi>\xb$ (the case $\xi<\xb$ is analogous). 

    1. If $\xi\not\in\boi{\sigma^-(\eta)}{\sigma^+(\eta)}$ for some $\eta\in\Sx$, then we may estimate
    \begin{alignat}{1}&
        |V_S(\xi)-V_S(\xb)|
        \leq
        \sum_{\eta\in \Sx,~\xb\leq\eta<\xi} J(V;\eta)
        \overset{\text{\myeqref{eq:shocknbhL}}}{\underset{\text{\myeqref{eq:shocknbhR}}}{\leq}}
        (2\delta_L)^{-1}\sum_{\eta\in \Sx,~\xb\leq\eta<\xi}|\sigma^+(\eta)-\sigma^-(\eta)|
        \notag\\&\leq
        (2\delta_L)^{-1}|\xi-\xb|
        \myeqlabel{eq:notxi}
    \end{alignat}
    since the neighbourhoods $\boi{\sigma^-(\eta)}{\sigma^+(\eta)}$ of distinct $\eta\in \Sx$ 
    are pairwise disjoint and contained in $\cli{\xi}{\xb}$.

    2. If $\xi\in\boi{\sigma^-(\eta)}{\sigma^+(\eta)}$ for some $\eta\in\Sx$, 
    then we apply \myeqref{eq:notxi} with $\xi\leftarrow\sigma^-(\eta)$:
    \begin{alignat}{1}
        |V_S(\xi)-V_S(\xb)|
        &\leq
        |V_S(\xi)-V_S(\sigma^-(\eta))| + |V_S(\sigma^-(\eta))-V_S(\xb)|
        \notag\\&\topref{eq:notxi}{\leq}
        |V_S(\xi)-V_S(\sigma^-(\eta))| + (2\delta_L)^{-1}|\sigma^-(\eta)-\xb|
        \notag\\&\leq
        |V_S(\xi)-V_S(\sigma^-(\eta))| + (2\delta_L)^{-1}|\xi-\xb|.
    \notag\end{alignat}

    2a. For $\xi\in\boi{\sigma^-(\eta)}{\eta}$ the first term is $=0$.

    2b. For $\xi\in\roi{\eta}{\sigma^+(\eta)}$ the first term is estimated by Lemma \myref{lem:Fshock}
    (using $\xb\not\in\boi{\sigma^-(\eta)}{\sigma^+(\eta)}\subset\Cx$ since $\xb\in\Rx$, $\Rx\cap\Cx=\emptyset$):
    \begin{alignat}{1}&
        |V_S(\xi)-V_S(\sigma^-(\eta))| = J(V;\eta) \topref{eq:lamlip}{\leq} C_\Sx|\eta-\xb| \leq C_\Sx|\xi-\xb|.
    \notag\end{alignat}
    
    Altogether we get the desired estimate, with $C_S:=C_\Sx+(2\delta_L)^{-1}$.
\end{proof}

\begin{theorem}
    \mylabel{th:VSL}
    $V=V_S+V_L$ 
    where $V_L$ is Lipschitz, with a Lipschitz constant independent of $V$.
    In particular $V$ is $\BV$.
\end{theorem}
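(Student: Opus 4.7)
The plan is to define $V_L := V - V_S$ and to show that this $V_L$ is globally Lipschitz on $I^\alpha$ with a Lipschitz constant independent of $V$; this immediately gives the decomposition claimed, and since $V_S$ has total variation bounded in the proof of Lemma \myref{lem:VS}, it also yields the conclusion $V \in \BV$.

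First I would verify continuity of $V_L$ on $I^\alpha$. At $\xb \in \Rx \cup \Cx$ this is immediate, since $V$ is continuous at such $\xb$ and $V_S$ is continuous off the countable set $\Sx$. At $\xb \in \Sx$ I exploit that $V$ has been normalized to the right limit $V(\xb+)$ and that $V_S$ is right-continuous with jump at $\xb$ equal to $V(\xb+)-V(\xb-)$: the jumps of $V$ and $V_S$ then cancel identically, so that $V_L(\xb-)=V_L(\xb)=V_L(\xb+)$.

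The Lipschitz bound proper is proved by splitting into two cases for any $\xi_1 < \xi_2 \in I^\alpha$. Case A (some $\eta \in \Rx$ lies in $\cli{\xi_1}{\xi_2}$): Theorem \myref{th:lipF} bounds $V$ at $\eta$ with constant $C_\Rx$ and Lemma \myref{lem:VSlipF} bounds $V_S$ at $\eta$ with constant $C_S$, both initially for $\xi \in \complement\Sx$. Subtracting and using density of $\complement\Sx$ (which follows since $\Sx$ is countable, Remark \myref{rem:Ddiscrete}) together with the continuity of $V_L$ extends the bound to all of $I^\alpha$, and the triangle inequality at $\eta$ delivers $|V_L(\xi_2) - V_L(\xi_1)| \leq (C_\Rx + C_S)(\xi_2 - \xi_1)$.

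Case B (no point of $\Rx$ lies in $\cli{\xi_1}{\xi_2}$) is the main obstacle and relies crucially on the structural theorems already proved. Lemma \myref{lem:limitpoint} states that any accumulation point of $\Sx$ belongs to $\Rx$, so $\Sx \cap \cli{\xi_1}{\xi_2}$ must be finite. Theorem \myref{th:constset} then partitions the complement of this finite set inside $\cli{\xi_1}{\xi_2}$ into maximal intervals of $\Cx$-type on which $V$ is constant, while Theorem \myref{th:shock-nbh} guarantees that each shock sits inside constant left- and right-neighbourhoods. Hence $V$ is piecewise constant on $\cli{\xi_1}{\xi_2}$ with jumps precisely at the finitely many $\Sx$-points, and by construction $V_S$ is piecewise constant there with exactly the same jumps. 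Therefore $V_L = V - V_S$ is piecewise constant with no jumps, and continuity from the first step forces $V_L$ to be constant on $\cli{\xi_1}{\xi_2}$, yielding $V_L(\xi_2) = V_L(\xi_1)$. Combining both cases, $V_L$ is Lipschitz with the $V$-independent constant $C_L := C_\Rx + C_S$; since $V_L$ is Lipschitz on the bounded interval $I^\alpha$ and $V_S$ is a saltus function of finite total variation, $V = V_L + V_S \in \BV(I^\alpha)$.
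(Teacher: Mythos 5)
Your proof is correct and reaches the same conclusion, but it is organized differently from the paper's. The paper performs a trichotomy on the left endpoint $\xi_1$ according to whether $\xi_1 \in \Rx$, $\Sx$, or $\Cx$, reducing the latter two cases to the first by ``sliding'' the endpoint across a constancy interval to the nearest $\Rx$-point (using $\sigma^+(\xi_1)$ or $\kappa^+(\xi_1)$). You instead do a dichotomy on whether $\cli{\xi_1}{\xi_2}$ meets $\Rx$ at all: if it does, you exploit the Lipschitz bounds of Theorem~\myref{th:lipF} and Lemma~\myref{lem:VSlipF} at an intermediate $\Rx$-point via the triangle inequality; if it does not, you observe that $V_L$ must actually be \emph{constant} on $\cli{\xi_1}{\xi_2}$, because the interval decomposes into finitely many constancy intervals (via Lemma~\myref{lem:limitpoint}, Theorem~\myref{th:constset}, and Theorem~\myref{th:shock-nbh}) whose jumps are exactly cancelled by $V_S$. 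Both routes lean on the same two Lipschitz inputs at $\Rx$-points; yours replaces the paper's chain of one- or two-step reductions by the observation that $V_L$ is locally constant away from $\Rx$, which is arguably cleaner. Two small omissions: (i) you do not explicitly record the easy reduction from the global domain to $I^\alpha$ (the paper notes in one sentence that $V$ is constant between the sectors $I^\alpha$, which are uniformly separated, and that $V$ is bounded); (ii) the continuity of $V_L$ at $\Sx$-points --- the first step in your argument and an implicit assumption in the paper's --- requires the jump bookkeeping to be consistent between the normalization $V(\xi)=V(\xi+)$ and the one-sided summation defining $V_S$, which the paper glosses over (the intended sum should run over $\eta\leq\xi$ for the jumps to cancel at the point itself, not merely in the one-sided limits). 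Neither point affects the substance of your argument.
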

\begin{proof}
    It is sufficient to obtain a Lipschitz estimate for $\xi,\eta\in I^\alpha$ ($\xi<\eta$) since $V$ is constant 
    in between intervals $I^\beta$, the distance to $I^\beta$ for $\beta\neq\alpha$ has a positive lower bound independent of $V$,
    and $V$ is bounded.

    1. First consider $\xi\in \Rx$. 
    \begin{alignat}{1}
        |V_L(\eta) - V_L(\xi)| 
        &\leq 
        |V(\eta) - V(\xi)|
        +
        |V_S(\eta) - V_S(\xi)|
        \notag\\&\overset{\text{Theorem \myref{th:lipF}}}{\underset{\text{Lemma \myref{lem:VSlipF}}}{\leq}}
        C |\eta-\xi|
        \myeqlabel{eq:VL1}
    \end{alignat}
    for some constant $C$ independent of $V$.

    2. Now consider $\xi\in \Sx$. 
    Then $V_L$ is constant on $\boi{\sigma^-(\xi)}{\sigma^+(\xi)}$ (the jump of $V$ at $\xi$ is cancelled by $V_S$),
    so we only need a Lipschitz estimate for $\eta\geq\sigma^+(\xi)$
    (which implies $\sigma^+(\xi)\not\in\partial I^\alpha$).
    By Theorem \myref{th:shock-nbh}, $\sigma^+(\xi)\in \Rx$, so we may use \myeqref{eq:VL1} 
    (with $\xi\leftarrow\sigma^+(\xi)$) and $V_L(\xi)=V_L(\sigma^+(\xi))$ to get
    \begin{alignat}{1}
        |V_L(\xi)-V_L(\eta)| 
        &=
        |V_L(\sigma^+(\xi))-V_L(\eta)|
        \topref{eq:VL1}{\leq}
        C |\sigma^+(\xi)-\eta|
        \leq 
        C |\xi-\eta|.
        \myeqlabel{eq:VL2}
    \end{alignat}

    3. Finally consider $\xi\in \Cx$. 
    Then $V_L$ (like $V$) is constant on $\boi{\kappa^-(\xi)}{\kappa^+(\xi)}$,
    so we only need a Lipschitz estimate for $\eta\geq\kappa^+(\xi)$
    (which implies $\kappa^+(\xi)\not\in\partial I^\alpha$).
    By Theorem \myref{th:constset}, $\kappa^+(\xi)\in \Rx\cup \Sx$. 

    3a. For $\kappa^+(\xi)\in \Rx$ we may use \myeqref{eq:VL1} with $\xi\leftarrow\kappa^+(\xi)\in \Rx$ and $V_L(\xi)=V_L(\kappa^+(\xi))$ to get
    \begin{alignat}{1}
        |V_L(\xi)-V_L(\eta)|
        &=
        |V_L(\kappa^+(\xi))-V_L(\eta)|
        \topref{eq:VL1}{\leq}
        C |\kappa^+(\xi)-\eta|
        \leq 
        C |\xi-\eta|.
        \myeqlabel{eq:VL3a}
    \end{alignat}

    3b. For $\kappa^+(\xi)\in \Sx$ we may use \myeqref{eq:VL2} with $\xi\leftarrow\kappa^+(\xi)\in \Sx$ 
    and $V_L(\xi)=V_L(\kappa^+(\xi))$ to get
    \begin{alignat}{1}
        |V_L(\xi)-V_L(\eta)|
        &=
        |V_L(\kappa^+(\xi))-V_L(\eta)|
        \topref{eq:VL2}{\leq}
        C |\kappa^+(\xi)-\eta|
        \leq 
        C |\xi-\eta|.
        \myeqlabel{eq:VL3b}
    \end{alignat}
\end{proof}

\begin{remark}
This shows that entropy-admissible self-similar weak solutions to the Riemann problem (for sufficiently small jump) for $1-d$ hyperbolic conservation laws are unique in $L^{\infty}$ (assuming $||U(\cdot)-\Ub||_{L^{\infty}}$ sufficiently small), extending the well-known result that they are unique in $BV$ (see Theorem 9.4.1 in \cite{dafermos-book}).
\end{remark}

\subsection{Continuity on open nonempty intervals}

\begin{theorem}
    \mylabel{th:onerf}%
    Consider any genuinely nonlinear sector, forward or backward.
    If $V$ is continuous on an open interval $B\subset I^\alpha$, 
    then it is either constant or constant on either side of a single $\alpha$-simple wave.
\end{theorem}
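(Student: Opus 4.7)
The plan is to leverage continuity of $V$ together with genuine nonlinearity to rigidly constrain the geometry of $V$ on $B$: show that the ``constant'' and ``resonance'' sets decompose $B$ into at most a central interval plus two constant tails at its endpoints, and that on the central interval $V$ is governed by the simple-wave ODE.

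First I would introduce $\phi(\xi) := \lambda^\alpha(V(\xi))$ and note that by continuity of $V$ on $B$ and the absence of shocks there, $B = (B \cap \Rx) \sqcup (B \cap \Cx)$, with $\phi(\xi) = \xi$ on $B \cap \Rx$ and $V$ (hence $\phi$) locally constant on each component of $B \cap \Cx$ by Theorem \myref{th:constset}. The key rigidity step: suppose some component $(c,d)$ of $B \cap \Cx$ had both endpoints $c, d$ in $B$; then $c, d \in \Rx$ (no shocks in $B$), so $\phi(c) = c$ and $\phi(d) = d$, but continuity of $\phi$ combined with its being constant on $(c, d)$ forces the constant to equal both $c$ and $d$ --- a contradiction. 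Hence $B \cap \Cx$ has at most two components, one adjacent to each endpoint of $B$, and $B \cap \Rx$ is a (possibly empty, possibly degenerate) interval.

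Next I would upgrade $V$ to be Lipschitz on $B$. Starting from \myeqref{eq:Adiff} and multiplying by $\hat{l}^\beta(V(\xi_2), V(\xi_1))$ for $\beta \neq \alpha$, strict hyperbolicity \myeqref{eq:stricthyperhat} yields $|\hat{l}^\beta(V(\xi_2), V(\xi_1))(V(\xi_2) - V(\xi_1))| \leq M|\xi_2 - \xi_1|$ exactly as in the proof of Theorem \myref{th:lipF}. Combined with the Lipschitz bound on $\phi$ derived above (piecewise identity or constant, matching at interfaces) and the local diffeomorphism trick $g^\alpha := \lambda^\alpha$, $g^\beta := \hat{l}^\beta(\cdot, V(\xi_1))(\cdot - V(\xi_1))$, this yields a uniform Lipschitz constant for $V$ on $B$. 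Then at a.e.\ $\xi \in B$, \myeqref{eq:claw-V-diffed} applies: on $B \cap \Cx$ we have $V_\xi = 0$, while on $B \cap \Rx$ we have $V_\xi \parallel r^\alpha(V)$, and differentiating $\xi = \lambda^\alpha(V(\xi))$ together with \myeqref{eq:gennon} pins down
\[ V_\xi = \frac{r^\alpha(V)}{\lambda^\alpha_V(V) r^\alpha(V)} =: G(V). \]

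This identifies $V$ on the interval $B \cap \Rx$ as a Lipschitz (in fact $C^1$) solution of the ODE $V_\xi = G(V)$, which by Picard--Lindel\"of is the unique trajectory through any starting value and, by the wave-fan construction of Section 14, is precisely a reparametrization of an $\alpha$-simple wave curve $R^\alpha$. If $B \cap \Rx$ is empty or a single point, continuity of $V$ forces $V$ to be constant on all of $B$; otherwise $V$ traces exactly one $\alpha$-simple wave on the interior interval, flanked by the (at most two) constant tails whose values match the endpoint values of the wave by continuity. The main technical obstacle I anticipate is the Lipschitz step: the pointwise Lipschitz bounds available from earlier sections apply only at $\Rx$-points, so the upgrade to a \emph{globally uniform} Lipschitz constant across $B$ requires executing the local diffeomorphism argument with uniform bounds --- routine given compactness of $\Peps$ and smoothness of $l^\alpha, r^\alpha, \lambda^\alpha$, but the step most in need of care.
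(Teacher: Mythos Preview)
Your proposal is correct and follows essentially the same approach as the paper: decompose $B$ into $\Cx$- and $\Rx$-parts via the endpoint contradiction, establish Lipschitz continuity via the $\hat l^\beta$ estimates and the local-diffeomorphism trick of Theorem~\ref{th:lipF}, and then read off the simple-wave ODE on $\Rx\cap B$. The only noteworthy difference is that you carry out the structural decomposition \emph{before} the Lipschitz step, which lets you observe directly that $\lambda^\alpha\circ V$ is piecewise identity-or-constant (hence Lipschitz with constant $1$) instead of re-running Lemma~\ref{lem:lamlip}; this is a mild simplification but not a different argument.
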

\begin{proof}
    (See Figure \myref{fig:forsimple}.)
    $V$ is Lipschitz on $B$, since we can repeat Lemma \myref{lem:lamlip} and Theorem \myref{th:lipF}
    with obvious changes to their proofs ($\Sx$ need not be considered since $V$ is continuous here).

    By continuity of $V$, $\Cx\cap B$ is open, hence a countable union of disjoint open intervals. 
    $V$ is constant on each of these intervals, by Theorem \myref{th:Uconst}, 
    and so is $\lambda^\alpha\circ V$, so that $\lambda^\alpha(V(\xi))-\xi=0$ cannot be satisfied at both endpoints. 
    Therefore at least one endpoint of each of these intervals is not in $\Rx$. Suppose this endpoint is in $\Sx$.  
    Since $\Sx \cap B$ is empty, this endpoint is an endpoint of $B$.  
    If this endpoint is not in $\Sx$, then it still must be an endpoint of $B$. 
    Since there are only two endpoints, $\Cx\cap B$ is a union of at most two of these intervals. 
    
    It follows that $\Cx \cap B$ is either $B$ itself, $B$ minus a single point 
    (which by continuity of $V$ implies $V$ is constant on all of $B$), or $B \setminus (\Rx \cap B)$, 
    where $\Rx \cap B$ is a closed interval of positive length. 
    By definition of $\Rx$,
    \begin{alignat}{1}
        \lambda^\alpha(V(\xi)) &= \xi \myeqlabel{eq:lamVr}
    \end{alignat}
    on $\Rx\cap B$.
    By \myeqref{eq:claw-V-diffed} $V_\xi$ (defined a.e., since $V$ is Lipschitz) is a multiple of $r^\alpha(V(\xi))$.
    Therefore $\xi\mapsto V(\xi)$ is part of the $\alpha$-simple wave curve $R^\alpha$,
    and \myeqref{eq:lamVr} shows it is the $\xi$-parametrization of $R^\alpha$. 
    Hence $V$ is an $\alpha$-simple wave on $\Rx$.
\end{proof}

\begin{remark}
This shows that although infinitely many waves can occur in a backward sector, there cannot be consecutive simple waves.  For more than one simple wave to exist, there must be at least one shock in between.
\end{remark}

\subsection{Admissible genuinely nonlinear forward sectors}

\begin{figure}[h]
    \parbox[t]{.45\linewidth}{%
        \centerline{\input{forshock.pstex_t}}
        \caption{In a forward sector $\xi\mapsto\lambda(V(\xi))-\xi$ cannot return to $0$ after a shock,
            and has the wrong sign for another admissible shock.}
        \mylabel{fig:forshock}}
\hfill
   \parbox[t]{.45\linewidth}{%
        \centerline{\input{forsimple.pstex_t}}
        \caption{In a forward sector, after a simple wave $\xi\mapsto\lambda(V(\xi))-\xi$ has the wrong sign 
            for an admissible shock, so it cannot return to $0$.}
        \mylabel{fig:forsimple}}
\end{figure}

\begin{theorem}
    \mylabel{th:forward-shock}%
    Consider an admissible genuinely nonlinear forward sector.
    Then $V$ is either constant, or constant on either side of a single simple wave, or constant on either side 
    of a single shock.
\end{theorem}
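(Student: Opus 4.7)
The plan is to combine the continuity-based structure theorem Theorem \myref{th:onerf} with the forward Lax condition \myeqref{eq:forward-lax} to argue that the sign of $\lambda^\alpha(V(\xi))-\xi$ is ``used up'' after any single wave, so at most one wave (shock or simple wave) can occur in the sector.

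First I would establish a forward-sector analogue of Theorem \myref{th:shock-nbh}. At any $\xb\in \Sx$, the forward Lax condition gives the quantitative gap $\lambda^\alpha(V(\xb+)) + \delta_L J(V;\xb) \leq \xb \leq \lambda^\alpha(V(\xb-)) - \delta_L J(V;\xb)$. Repeating the sequence-selection argument of Theorem \myref{th:shock-nbh} with the inequality signs reversed, together with Theorem \myref{th:Uconst} applied on intervals where $\lambda^\alpha(V(\xi))-\xi$ has a fixed nonzero sign, yields maximal $\sigma^-(\xb) < \xb < \sigma^+(\xb)$ in $\overline{I^\alpha}$ so that $V$ is constant on $\roi{\sigma^-(\xb)}{\xb}$ and on $\loi{\xb}{\sigma^+(\xb)}$, with $\lambda^\alpha(V(\xi))-\xi > 0$ on the left neighbourhood and $< 0$ on the right. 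In particular $\Sx$ is discrete in $I^\alpha$.

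Next I would show $\Sx$ has at most one element. Suppose for contradiction $\xb_1 < \xb_2$ both lie in $\Sx$, chosen consecutive so that $\boi{\xb_1}{\xb_2}\cap \Sx=\emptyset$. Then $V$ is continuous on $B:=\boi{\xb_1}{\xb_2}$, so Theorem \myref{th:onerf} gives that $V$ is either constant on $B$ or constant on each side of a single $\alpha$-simple wave $\cli{a}{b}\subset B$ with $\xb_1 < a \leq b < \xb_2$. In the constant case, forward Lax at $\xb_1$ yields $\lambda^\alpha(V(\xi)) = \lambda^\alpha(V(\xb_1+)) < \xb_1$ for $\xi\in B$, whereas Lax at $\xb_2$ demands $\lambda^\alpha(V(\xb_2-)) > \xb_2$, a contradiction. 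In the simple-wave case, continuity of $V$ on $B$ together with $\lambda^\alpha(V(\xi))=\xi$ on $\cli{a}{b}$ forces $\lambda^\alpha(V(\xb_1+)) = \lambda^\alpha(V(a)) = a > \xb_1$, again contradicting $\lambda^\alpha(V(\xb_1+)) < \xb_1$.

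To finish I would split into cases on $|\Sx|$. If $\Sx=\emptyset$ then $V$ is continuous on $I^\alpha$ and Theorem \myref{th:onerf} yields the claim directly. If $\Sx=\{\xb\}$ then $V$ is continuous on each open component of $I^\alpha\setminus\{\xb\}$; applying Theorem \myref{th:onerf} on each side I rule out a simple wave by the same sign comparison: a putative simple wave $\cli{a}{b}$ with $\xb < a$ in the right piece would give $\lambda^\alpha(V(\xb+)) = a > \xb$, contradicting the Lax bound $\lambda^\alpha(V(\xb+)) < \xb$, and symmetrically on the left. The main obstacle I anticipate is the adaptation of Theorem \myref{th:shock-nbh} to the forward case, in particular checking that $V$ cannot oscillate back to the Lax-forbidden side immediately across a shock; that follows as before from the uniqueness of the Rankine-Hugoniot curve through $V^-$ plus the strict Lax gap, after which everything reduces to a short sign comparison using Theorem \myref{th:onerf}.
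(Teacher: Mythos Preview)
Your approach is the paper's: adapt Theorem~\myref{th:shock-nbh} to the forward Lax sign, then invoke Theorem~\myref{th:onerf} on shock-free intervals. The paper's execution is shorter because it pushes your step~1 one line further. Once $V$ is constant $=V^+$ on $\loi{\xb}{\sigma^+(\xb)}$ with $\lambda^\alpha(V^+)<\xb$, the quantity $\lambda^\alpha(V(\xi))-\xi=\lambda^\alpha(V^+)-\xi$ is strictly decreasing from a negative value and can never return to zero; hence $\sigma^+(\xb)$ can lie neither in $\Rx$ (would need $\lambda^\alpha=\xi$) nor in $\Sx$ (forward Lax there would force $\lambda^\alpha(V(\sigma^+(\xb)-))>\sigma^+(\xb)$), so $\sigma^+(\xb)\in\partial I^\alpha$, and symmetrically on the left. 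That finishes the discontinuous case in one stroke and makes your steps~2--3 redundant. It also sidesteps the one soft spot in your write-up: ``choose $\xi_1<\xi_2$ consecutive in $\Sx$'' is not justified by discreteness alone---you would still need to rule out one-sided accumulation (a forward analogue of Lemma~\myref{lem:limitpoint})---though this is easily repaired and in fact falls out of the observation above.
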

\begin{proof}
    (See Figures \myref{fig:forshock} and \myref{fig:forsimple}.)
    Assume that $V$ is discontinuous at some $\xb\in I^\alpha$.
    Choose $(\xi_k^-),(\xi^+_k)\rightarrow\xb$ with $\xi^-_k<\xi^+_k$ and $V(\xi_k^\pm)\rightarrow V^\pm$
    where $[V]\neq 0$.
    The forward Lax condition \myeqref{eq:forward-lax} yields 
    \begin{alignat}{1}
        \lambda^\alpha(V^-) > \xb > \lambda^\alpha(V^+) \myeqlabel{eq:forlax}
    \end{alignat}
    We may proceed in the same manner as in the proof of Theorem \myref{th:shock-nbh}.
    $\lambda(V(\xi))$ is still constant in $\boi{\xb}{\sigma^+(\xb)}$ and $\xi$ is strictly increasing,
    but now \myeqref{eq:forlax} has the opposite comparisons:
    $\lambda^\alpha(V(\xi+))-\xi$ is \emph{negative} and cannot reach $0$ or change signs again. Hence $\sigma^+(\xb)$ is
    the right boundary of $I^\alpha$. By an analogous argument on the $\xi<\xb$ side we obtain that
    $\sigma^-(\xb)$ is the left boundary of $I^\alpha$.

    Now assume $V$ is continuous on $I^\alpha$. Then Theorem \myref{th:onerf} yields the rest of the result.
\end{proof}

\section{Isentropic Euler}

\subsection{Calculations}

We now focus on a particularly important case, the \emph{isentropic Euler equations}:
\begin{alignat}{1} &
    U_t + f^x(U)_x + f^y(U)_y = 0, 
    \notag\\&
    U = \begin{bmatrix}
        \rho \\
        m \\
        n
    \end{bmatrix}
    , \quad 
    f^x(U) = \left( \begin{array}{c}  m \\ m^2 \rho^{-1} + p \\ mn\rho^{-1} \end{array} \right), \quad
    f^y(U) = \left( \begin{array}{c} n \\ mn\rho^{-1} \\  n^2\rho^{-1} + p \end{array} \right)\quad.
    \notag\end{alignat}
Here $(m,n)$ is the momentum density vector, $\vec v=(u,v)=(\frac{m}{\rho},\frac{n}{\rho})$ the velocity. 
Then $P$ is an open subset of $\{(\rho,m,n)\in\R^3:\rho>0\}$.
We assume the \emph{pressure} $p=p(\rho)$ satisfies 
$$ c^2 = p'(\rho) > 0 $$
for all $\rho>0$; $c$ is the \emph{sound speed}.
We assume
\begin{alignat}{1}
    c_\rho &> -1 \myeqlabel{eq:crho}
\end{alignat}
which is satisfied for most relevant pressure laws, including $p(\rho)=\rho^\gamma$ for $\gamma>-1$.

Take
\begin{alignat}{1} 
    e(\rho) &= \int_0^\rho \frac{p(\rho)}{\rho^2}d\rho,
\notag\end{alignat}
then
\begin{alignat}{1}
    \eta(U) &:= \rho\big(e(\rho)+\frac12|\vec v|^2\big) , \quad \vec\psi(U) = (\eta(U)+p) \vec v
\notag\end{alignat}
form an entropy-flux pair $(\eta,\vec\psi)$ with uniformly convex $\eta$.

For simplicity we assume units have been chosen so that $c=1$ for $\rho=1$.

The Euler equations are invariant under rotation (and mirror reflection): if $U$ is a weak/weak entropy/strong solution, 
then for any $2\times 2$ orthogonal matrix $Q$, 
$$ 
U' = (\rho',\vec v'),\quad 
\rho'(t,\vec x')=\rho(t,\vec x),\quad
\vec v'(t,\vec x')=Q\vec v(t,\vec x),\quad 
\vec x'=Q\vec x 
$$ 
is another weak/weak entropy/strong solution. The equation also also invariant under change of inertial frame: for any $\vec a\in\R^2$, 
another solution $U''$ is
$$ U''=(\rho'',\vec v''),\quad
\rho''(t,\vec x'')=\rho(t,\vec x),\quad
\vec v''(t,\vec x'')=\vec v(t,\vec x)+\vec a,\quad 
\vec x''=\vec x + t\vec a. $$

Consider steady self-similar solutions. 
In the framework of the present paper we consider only the strictly hyperbolic case.
To this end we consider a background state $\Ub=(\rho_0,M_0,0)$ with $M_0>1$.
(Due to rotation invariance no generality is lost.
If we interpret supersonic steady Euler flow as an initial-value problem,
with data imposed at $x=-\infty$, hyperbolicity with $x$ as time and $y$ as space variable
requires $M>1$, not just $|\vec M|>1$.) 

In addition we choose $\ueps>0$ so small that $\|U-\Ub\|<\ueps$ implies
$M>1$ as well. We may also choose units so that $\rho_0=c_0=1$.

\begin{alignat}{1}
    f^x_U(U) &= \begin{bmatrix}
        0 & 1 & 0 \\ 
        -\frac{m^2}{\rho^2} + c^2 & \frac{2m}{\rho} & 0 \\ 
        -\frac{mn}{\rho^2} & \frac{n}{\rho} & \frac{m}{\rho} 
    \end{bmatrix} \qquad
    f^y_U(U) = \begin{bmatrix} 
        0 & 0 & 1 \\ 
        -\frac{mn}{\rho^2} & \frac{n}{\rho} & \frac{m}{\rho} \\ 
        -\frac{n^2}{\rho^2} + c^2 & 0 & \frac{2n}{\rho} 
    \end{bmatrix}  
\notag\end{alignat}
The eigenvalues of $f^x_U(1, M_0, 0)$ are $M_0 \pm 1, M_0$.  Therefore, if $M_0 > 1$ as assumed, all eigenvalues of $f^x_U$ will be positive, making $e_{VV}$ positive definite, and therefore the forward sectors have $x>0$.  If instead $M_0 < -1$ (as required for hyperbolicity with $-x$ serving as a time variable), then all the forward sectors would have $x<0$.  

The generalized eigenvalues (roots of $p$ in \myeqref{eq:hom}) are 
\begin{alignat}{1} 
    \lambda_\pm &= 
    \frac{mn \pm \rho c \sqrt{m^2+n^2-(\rho c)^2}}{m^2-(\rho c)^2}, \qquad
    \lambda_0 = 
    \frac{n}{m},
    \notag\end{alignat}
which are real, distinct and analytic functions of $U$ for $M>1$.

The generalized eigenvector $r_0$ for $\lambda_0$ is $(0,m,n)$. 
\begin{alignat}{1}
    \nabla_U\lambda_0(U)\cdot r_0(U) 
    &=
    \nabla_{(\rho,M,N)}\big(\frac{n}{m}\big)\cdot(0,m,n)
    =
    \big(0,\frac{-n}{m^2},\frac{1}{m}\big)\cdot(0,m,n) = 0,
    \notag\end{alignat}
so the $0$-field is linearly degenerate.

For the $\pm$-fields it is sufficient to consider the generalized eigenvectors only at $\Ub$:
\begin{alignat}{1}
    f^x(\Ub) &= \begin{bmatrix}
        0 & 1 & 0 \\ 
        1-m^2 & 2m & 0 \\ 
        -mn & n & m
    \end{bmatrix}, \quad
    f^y_U(\Ub) = \begin{bmatrix} 
        0 & 0 & 1 \\ 
        -mn & n & m \\ 
        1-n^2 & 0 & 2n
    \end{bmatrix} ,\quad
    r_\pm = \begin{bmatrix}
        \pm m  \\
        \pm (m^2-1) \\
        \sqrt{m^2-1}
    \end{bmatrix},
    \notag\end{alignat}
\begin{alignat}{1}
    \frac{\partial\lambda_\pm}{\partial n} 
    &=
    \frac{m \pm \rho c n (m^2+n^2-(\rho c)^2)^{-1/2}}{m^2-(\rho c)^2}
    \overset{\rho=c=1,n=0}=
    \frac{M}{M^2-1}.
\notag\end{alignat}
For the $\rho,m$ derivatives we may substitute $n=0$ first:
\begin{alignat}{1}
    \lambda_\pm &= 
    \frac{ \pm \rho c }{\sqrt{m^2-(\rho c)^2}}
    =
    \frac{ \pm 1 }{\sqrt{M^2-1}}.
    \myeqlabel{eq:lam-pm}
\end{alignat}
Then
\begin{alignat}{1}&
    \frac{\partial\lambda_\pm}{\partial m} 
    =
    -\frac12 2m\frac{ \pm \rho c }{(m^2-(\rho c)^2)^{3/2}}
    \overset{\rho=c=1,n=0}=
    \mp M(M^2-1)^{-3/2}, 
    \notag\\&
    \frac{\partial\lambda_\pm}{\partial \rho} 
    =
    \frac{\partial(\rho c)}{\partial\rho} \partial_{(\rho c)}\frac{ \pm \rho c }{\sqrt{m^2-(\rho c)^2}} 
    \notag\\&=
    \frac{\partial(\rho c)}{\partial\rho} 
    \Big( 
    \frac{ \pm 1 }{\sqrt{m^2-(\rho c)^2}} 
    -\frac12\cdot(-2\rho c)
    \frac{ \pm \rho c }{(m^2-(\rho c)^2)^{3/2}} 
    \Big)
    \notag\\&\overset{\rho=c=1,n=0}=
    \pm
    (\rho c)_{\rho|\rho=c=1}
    \Big( 
    \frac{ M^2 - 1 }{(M^2-1)^{3/2}} 
    +
    \frac{ 1 }{(M^2-1)^{3/2}} 
    \Big)
    \notag\\&=
    \pm
    (1+c_\rho(1))
    \frac{ M^2 }{(M^2-1)^{3/2}} .
    \notag\end{alignat}
Therefore, 
\begin{alignat}{1}
    \nabla_U\lambda_\pm (\Ub) \cdot r_\pm(\Ub)
    &=
    (M^2-1)^{-3/2} 
    \begin{bmatrix}
        \pm M^2(1+c_\rho(1)) \\
        \mp M \\
        M(M^2-1)^{1/2}
    \end{bmatrix}
    \cdot 
    \begin{bmatrix}
        \pm M \\
        \pm (M^2-1) \\
        \sqrt{M^2-1}
    \end{bmatrix}
    \notag\\&=
    \frac{
        M^3(1+c_\rho(1))
        -M(M^2-1)
        +M(M^2-1)
    }{(M^2-1)^{3/2}}
    = 
    \frac{M^3(1+c_\rho(1))}{(M^2-1)^{3/2}},
    \notag\end{alignat}
so by \myeqref{eq:crho} the $\pm$-fields are genuinely nonlinear at $\Ub$. If we choose $\ueps>0$ sufficiently small, then
they are genuinely nonlinear for all values of $U$ with $\|U-\Ub\|<\ueps$.

Consider the upper left quadrant, $y>0>x$, 
Here $\lambda_-$ is relevant. 
In increasing $x$ direction with fixed $y$, corresponding to decreasing $\xi$, 
the change of $U$ in a simple wave is given by $-r_-$: density increases, velocity turns down and decreases
(same effect as the $\lambda_-$-shocks).
This is a \defm{compression wave}. 
It can be approximated as the limit of an increasingly fine fan of weakening shocks.

In the upper right quadrant $x,y>0$, $\lambda_+$ is important.
In increasing $x$ direction with fixed $y$, corresponding to decreasing $\xi$, 
the change of $U$ in a simple wave is given by $-r_+$: density \emph{de}creases, velocity turns downwards and \emph{in}creases
(opposite to the behaviour of $\lambda_+$-shocks).
This is an \defm{expansion wave} (also known as \defm{Prandtl-Meyer} wave).

\subsection{Summary}

All results combined, we have the following description of steady and self-similar Euler flows $U$ that are sufficiently
$L^\infty$-close to a constant background state $\Ub=(\rho,Mc,0)$
with Mach number $M>1$ (supersonic), defining \defm{Mach angle} $\mu=\arcsin\frac{1}{M}
$  (see Figure \myref{fig:test}): 

1. they are necessarily $\BV$,

2. they are constant outside six narrow sectors whose center lines are $(1:0)$, $(\cos\mu:\sin\mu)$, $(\cos\mu:-\sin\mu)$,

3. in the $(1:0)$ forward and backwards sectors $U$ is constant on each side of a single contact discontinuity (which may vanish),

4. in the forward $(\cos\mu:\pm\sin\mu)$ sectors $U$ is constant on each side of a single shock or single rarefaction wave
(which may vanish),

5. in the backward $(\cos\mu:\pm\sin\mu)$ sectors $U$ can have an infinite or any finite number of shocks and compression waves, 
but

5a. two consecutive compression waves with a gap are not possible, and

5b. the shock set (on the unit circle) is discrete, with each shock having constant neighbourhoods on each side
whose size is lower-bounded proportionally to the shock strength.

It does not seem possible to improve these results without making additional assumptions. 
Examples with infinitely many consecutive shocks, or shocks interspersed with compression waves,
or compression waves ending in a point that is a limit point of shocks, can be constructed.

\section{Appendix}

\subsection{Saltus functions}

\begin{definition}
    \mylabel{def:saltus}%
    A \defm{saltus function} $f:D\rightarrow\R^m$ ($D\subset\R$) has the form
    $$ f(x) = \sum_{x_n\leq x}a_n + \sum_{x_n<x}b_n $$
    where $(x_n)$ is a sequence and $\sum a_n,\sum b_n$ are absolutely converging series.
\end{definition}

\subsection{Versions}

\begin{lemma}
    \mylabel{lem:version-adapt}%
    Let $\Omega\subset\R^n$ measurable nonempty, $K\subset\R^m$ compact, $U\in L^\infty(\Omega)$ so that $U(x)\in K$ for a.e.\ $x\in\Omega$,
    $g:K\rightarrow\R^k$ and $\tilde g:\Omega\rightarrow\R^k$ continuous. 
    If
    \begin{alignat}{1}
        g(U(x)) &\leq \tilde g(x) \quad\text{for a.e.\ $x\in\Omega$,} \myeqlabel{eq:gg}
    \end{alignat}
    (meaning $g_i(U(x))\leq \tilde g_i(x)$ for all $i$, where $g=(g_1,...,g_k)$, $\tilde g=(\tilde g_1,...,\tilde g_k)$),
    then we can find a version $\tilde U$ of $U$, with values in $K$ \emph{everywhere}, so that 
    \begin{alignat}{1}
        g(\tilde U) &\leq \tilde g \quad\text{for \emph{all} $x\in\Omega$.}\myeqlabel{eq:ggg}
    \end{alignat}
\end{lemma}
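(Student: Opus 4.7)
The plan is to build $\tilde U$ by a measurable selection that agrees with $U$ off a null set and uses compactness of $K$ together with continuity of $g,\tilde g$ to repair the values on the null set.

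First I would let $N\subset\Omega$ denote the measure-zero set of $x$ where either $U(x)\notin K$ or the componentwise inequality $g(U(x))\leq\tilde g(x)$ fails, and set $\tilde U(x):=U(x)$ for every $x\in\Omega\setminus N$; on that set the two requirements in \eqref{eq:ggg} together with $\tilde U(x)\in K$ hold by assumption.

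For $x\in N$ I would pick a sequence $(x_j)\subset\Omega\setminus N$ with $x_j\to x$. In the paper's application $\Omega=\R$, so every $x$ is a density point of $\Omega$ and such a sequence exists; for a general measurable $\Omega$ one first throws into $N$ the (null) set of $x$ having some ball $B_r(x)$ with $|B_r(x)\cap\Omega|=0$, and only approximable points remain. Since $U(x_j)\in K$ and $K$ is compact, I extract a subsequence with $U(x_{j_k})\to V\in K$. Continuity of $g$ then gives $g(U(x_{j_k}))\to g(V)$; continuity of $\tilde g$ gives $\tilde g(x_{j_k})\to\tilde g(x)$; and passage to the limit in the componentwise inequality $g(U(x_{j_k}))\leq\tilde g(x_{j_k})$ (which is preserved by limits) yields $g(V)\leq\tilde g(x)$. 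I set $\tilde U(x):=V$. By construction $\tilde U$ takes values in $K$ everywhere, satisfies \eqref{eq:ggg} pointwise on $\Omega$, and differs from $U$ only on the null set $N$, so $\tilde U$ is a version of $U$.

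The only real subtlety is handling points of $\Omega$ that are not approximable from $\Omega\setminus N$; in the paper's setting $\Omega=\R$ this never occurs, so I expect no obstacle. Apart from that, the argument is just compactness of $K$ plus joint continuity, and the componentwise nature of the inequality $\leq$ in $\R^k$ causes no trouble since each coordinate passes to the limit independently.
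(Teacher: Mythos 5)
Your proof is essentially the paper's proof: set $\tilde U=U$ off the null exceptional set, and at an exceptional point $x$ take a subsequential limit (in compact $K$) of $U$-values along a sequence in the good set converging to $x$, then pass continuity of $g$ and $\tilde g$ to the limit. The one point where you go further is in flagging the approximability issue for a general measurable $\Omega$; the paper simply asserts that every point of the null complement is a limit of points in the good set, which does silently require that $\Omega$ have no measure-isolated points (as is the case in the paper's application $\Omega=\R$). Note, though, that your proposed repair of ``throwing those points into $N$'' does not by itself restore the statement, since $\tilde U$ must still be defined there and the inequality may be unsatisfiable at such a point; as you say, this never arises in the paper's setting, and the paper's proof shares the same tacit assumption.
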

\begin{proof}
    We immediately modify $U$, on a set of measure $0$, to have values in $K$ everywhere.
    
    Let $E=\{x~|~g(U(x)) \leq \tilde g(x) \}$. Then $\complement E$ has measure zero,
    so every $x\in\complement E$ is the limit of a sequence $(x_n)$ in $E$.
    $(U(x_n))\subset K$ which is compact, so we may choose a subsequence $(x_n')$ so that $(U(x_n'))$ converges as well. 
    Define $\tilde U(x):=\lim(U(x_n'))\in K$ (we use \emph{one} subsequence for each $x$, as the limit for others may be different
    of course). Now
    $$ g(\tilde U(x))\leftarrow g(U(x_n))\leq\tilde g(x_n)\rightarrow\tilde g(x) ,$$
    so \myeqref{eq:ggg} is satisfied.
\end{proof}


\begin{thebibliography}{10}

\bibitem{ben-dor-book}
{G.} Ben-Dor.
\newblock {\em Shock Wave Reflection Phenomena}.
\newblock Springer, 1992.

\bibitem{ben-dor-shockwaves2006}
{G.} Ben-Dor.
\newblock A state-of-the-knowledge review on pseudo-steady shock-wave
  reflections and their transition criteria.
\newblock {\em Shock Waves}, 15:277--294, 2006.

\bibitem{bianchini-bressan-j}
{S.} Bianchini and {A.} Bressan.
\newblock Vanishing viscosity solutions of nonlinear hyperbolic systems.
\newblock {\em Ann. Math. (2nd series)}, 161(1):223--342, 2005.

\bibitem{bressan-crasta-piccoli}
{A.} Bressan, {G.} Crasta, and {B.} Piccoli.
\newblock {\em Well-Posedness of the {Cauchy} Problem for $n\times n$ Systems
  of Conservation Laws}.
\newblock Number 694 in Memoirs of the AMS. American Mathematical Society, July
  2000.

\bibitem{bressan-goatin}
{A.} Bressan and {P.} Goatin.
\newblock Oleinik type estimates and uniqueness for $n\times n$ conservation
  laws.
\newblock {\em J. Diff. Eqs.}, 156:26--49, 1999.

\bibitem{bressan-lefloch}
{A.} Bressan and {P.} LeFloch.
\newblock Uniqueness of weak solutions to systems of conservation laws.
\newblock {\em Arch. Rat. Mech. Anal.}, 140:301--317, 1997.

\bibitem{canic-keyfitz-lieberman}
{S.} {\v{C}ani\'{c}}, {B.} Keyfitz, and {G.} Lieberman.
\newblock A proof of existence of perturbed steady transonic shocks via a free
  boundary problem.
\newblock {\em Comm. Pure Appl. Math.}, 53(4):484--511, 2000.

\bibitem{chen-feldman-selfsim-journal}
Gui-Qiang Chen and {M.} Feldman.
\newblock Global solutions to shock reflection by large-angle wedges for
  potential flow.
\newblock {\em Annals of Math.}, 171(2):1067--1182, 2010.

\bibitem{courant-friedrichs}
{R.} Courant and {K.O.} Friedrichs.
\newblock {\em Supersonic Flow and Shock Waves}.
\newblock Interscience Publishers, 1948.

\bibitem{dafermos-book}
{C.} Dafermos.
\newblock {\em Hyperbolic conservation laws in continuum physics}.
\newblock Springer, 2nd edition, 2005.

\bibitem{dafermos-sbv-2008}
{C.} Dafermos.
\newblock Wave fans are special.
\newblock {\em Acta Math. Appl. Sinica (English Series)}, 24(3):369--374, 2008.

\bibitem{elling-nuq-journal}
{V.} Elling.
\newblock A possible counterexample to well-posedness of entropy solutions and
  to {Godunov} scheme convergence.
\newblock {\em Math. Comp.}, 75:1721--1733, 2006.

\bibitem{elling-sonic-potf}
V.~Elling.
\newblock Counterexamples to the sonic criterion.
\newblock {\em Arch. Rat. Mech. Anal.}, 194(3):987--1010, Dec. 2009.

\bibitem{elling-detachment}
V.~Elling.
\newblock Instability of strong regular reflection and counterexamples to the
  detachment criterion.
\newblock {\em {SIAM} J. Appl. Math.}, 70(4):1330--1340, 2009.

\bibitem{elling-rrefl-lax}
{V.} Elling.
\newblock Regular reflection in self-similar potential flow and the sonic
  criterion.
\newblock {\em Commun. Math. Anal.}, 8(2):22--69, 2010.

\bibitem{elling-ctlip}
{V.} Elling.
\newblock {H\"older} continuity and differentiability on converging
  subsequences.
\newblock {\em Acta Math. Sci.}, 32(1):1--440, Jan. 2012.

\bibitem{elling-liu-pmeyer}
{V.} Elling and Tai-Ping Liu.
\newblock Supersonic flow onto a solid wedge.
\newblock {\em Comm. Pure Appl. Math.}, 61(10):1347--1448, 2008.

\bibitem{evans}
{L.C.} Evans.
\newblock {\em Partial Differential Equations}.
\newblock American Mathematical Society, 1998.

\bibitem{glimm}
{J.} Glimm.
\newblock Solutions in the large for nonlinear hyperbolic systems of equations.
\newblock {\em Comm. Pure Appl. Math.}, 18:697--715, 1965.

\bibitem{glimm-lax}
{J.} Glimm and {P.D.} Lax.
\newblock {\em Decay of solutions of systems of nonlinear hyperbolic
  conservation laws}.
\newblock Number 101 in Memoirs AMS. American Mathematical Society, 1970.

\bibitem{heibig}
{A.} Heibig.
\newblock R\'{e}gularit\'{e} des solutions du probl\`{e}me de {Riemann}.
\newblock {\em Comm. PDE}, 15(5):693--709, 1990.

\bibitem{henderson-menikoff}
{L.F.} Henderson and {R.} Menikoff.
\newblock Triple-shock entropy theorem and its consequences.
\newblock {\em J. Fluid Mech.}, 366:179--210, 1998.

\bibitem{henderson-etal}
{L.F.} Henderson, {K.} Takayama, {W.Y.} Crutchfield, and {S.} Itabashi.
\newblock The persistence of regular reflection during strong shock diffraction
  over rigid ramps.
\newblock {\em J. Fluid Mech.}, 431:273--296, 2000.

\bibitem{hornung-reviews}
{H.} Hornung.
\newblock Regular and mach reflection of shock waves.
\newblock {\em Ann. Rev. Fluid Mech.}, 18:33--58, 1986.

\bibitem{hunter-tesdall}
{J.} Hunter and {A.} Tesdall.
\newblock Self-similar solutions for weak shock reflection.
\newblock {\em {SIAM} J. Appl. Math.}, 63(1):42--61, 2002.

\bibitem{kruzkov}
S.N. Kru\v{z}kov.
\newblock First order quasilinear equations in several independent variables.
\newblock {\em Mat. Sb.}, 81(2):285--355, 1970.
\newblock transl. in Math. USSR Sb. 10 (1970) no. 2, 217--243.

\bibitem{lax-claws-ii}
{P.D.} Lax.
\newblock Hyperbolic systems of conservation laws {II}.
\newblock {\em Comm. Pure Appl. Math.}, 10:537--566, 1957.

\bibitem{lax-xdliu}
{P.D.} Lax and Xu-Dong Liu.
\newblock Solution of two-dimensional {Riemann} problems of gas dynamics by
  positive schemes.
\newblock {\em {SIAM} J. Sci. Comput.}, 19(2):319--340, March 1998.

\bibitem{li-zhang-yang}
Jiequan Li, Tong Zhang, and Shuli Yang.
\newblock {\em The Two-Dimensional {Riemann} Problem in Gas Dynamics}.
\newblock Addison Wesley Longman, 1998.

\bibitem{liu-yang}
Tai-Ping Liu and Tong Yang.
\newblock Well-posedness theory for hyperbolic conservation laws.
\newblock {\em Comm. Pure Appl. Math.}, 52:1553--1586, 1999.

\bibitem{neumann-1943}
{J. von} Neumann.
\newblock Oblique reflection of shocks.
\newblock Technical Report~12, Navy Dep., Bureau of Ordnance, Washington, D.C.,
  1943.
\newblock In: Collected works, v.\ 6, p.\ 238--299.

\bibitem{oleinik-1959}
{O.A.} Oleinik.
\newblock Uniqueness and stability of the generalized solution of the cauchy
  problem for a quasilinear equation.
\newblock {\em Usp. Mat. Nauk}, pages 165--170, 1959.
\newblock English translation: {AMS} Translations, Ser.\ {II}, vol.\ 33,
  285--290.

\bibitem{rauch-bv}
{J.} Rauch.
\newblock {BV} estimates fail for most quasilinear hyperbolic systems in
  dimensions greater than one.
\newblock {\em Comm. Math. Phys.}, 106:481--484, 1986.

\bibitem{serre-1}
{D.} Serre.
\newblock {\em Systems of Conservation Laws}, volume~1.
\newblock Cambridge University Press, 1999.

\bibitem{serre-hbfluidmech}
{D.} Serre.
\newblock Shock reflection in gas dynamics.
\newblock {\em Handbook of Mathematical Fluid Mechanics}, 4:39--122, 2007.

\bibitem{serre-matrices-secondedition}
{D.} Serre.
\newblock {\em Matrices: theory and applications}.
\newblock Springer, New York, 2nd edition, 2010.

\bibitem{skews-1997}
{B.} Skews.
\newblock Aspect ratio effects in wind tunnel studies of shock wave reflection
  transition.
\newblock {\em Shock Waves}, 7:373--383, 1997.

\bibitem{smoller-uq-1969}
{J.} Smoller.
\newblock A uniqueness theorem for riemann problems.
\newblock {\em Arch. Rat. Mech. Anal.}, 33(2):110--115, 1969.

\bibitem{vasiliev-kraiko}
{E.I.} Vasilev and {A.N.} Kraiko.
\newblock Numerical simulation of weak shock diffraction over a wedge under the
  von {Neumann} paradox conditions.
\newblock {\em Comp. Math. Math. Phys.}, 39(8):1335--1345, 1999.

\bibitem{zhang:593}
Tong Zhang and Yu~Xi Zheng.
\newblock Conjecture on the structure of solutions of the riemann problem for
  two-dimensional gas dynamics systems.
\newblock {\em SIAM Journal on Mathematical Analysis}, 21(3):593--630, 1990.

\bibitem{yuxi-zheng-book}
Yuxi Zheng.
\newblock {\em Systems of Conservation Laws}.
\newblock {Birkh\"auser}, 2001.

\bibitem{yuxi-zheng-rref}
Yuxi Zheng.
\newblock Two-dimensional regular shock reflection for the pressure gradient
  system of conservation laws.
\newblock {\em Acta Math. Appl. Sin. Engl. Ser.}, 22(2):177--210, 2006.

\end{thebibliography}
\end{document}